\documentclass[reqno]{amsart}
 %Format: latex
\usepackage{epsfig,latexsym,amsfonts,amssymb,amsmath,amscd}
\usepackage{amsthm}
\usepackage{tikz-cd}
\usepackage[mathscr]{eucal}
\usepackage{mathrsfs}
\usepackage{mathbbol}
\usepackage{array}
\usetikzlibrary{matrix,arrows,decorations.pathmorphing}
\usepackage{oldgerm,units,color}

\usepackage{pst-node}
\usepackage{graphicx}
% \usepackage{auto-pst-pdf}
%\usepackage{xypic}
%\input xy
%\xyoption{all}

\usepackage{eepic, epic}

\usepackage{ifthen}

\def\mtw{\cdot_{\operatorname{tw}}}
\def\tw{_{\operatorname{tw}}}
\def\Hy{\mathcal H}
\def\hyzero{\zero_{\mathcal H}}

\def\det{\operatorname{det}}
\def\GKdim{\operatorname{GK-dim}}

\def\tTA{\tT_{\mcA}}
\def\tTM{\tT_{\mcM}}

\def\tTz{ \tT_\operatorname{\zero}}

\newcommand{\nsets}{{\mathcal P}^*}

\def\module0{module$^\dagger$}

\def\ker{\operatorname{ker}}

\def\sgn{{\operatorname{sgn}}}

\def\ssemiring0{$s$-semiring$^\dagger$}
\newtheorem*{nothma}{\textbf{Theorem A}}
\newtheorem*{nothmb}{\textbf{Theorem B}}
\newtheorem*{nothmc}{\textbf{Proposition C}}
\newtheorem*{nothmd}{\textbf{Proposition D}}
\newtheorem*{nothme}{\textbf{Proposition E}}

%\marginpar{*}
\usepackage{tikz}
%

%%%%%%%%%%%%%%%%%%%%%%% PACKAGES  %%%%%%%%%%%%%%%%%%%%%%%%%%%%%%%%
\usepackage{epsfig,latexsym,amsfonts,amssymb,amsmath,amscd,graphics,epic}
\usepackage{amsfonts,amssymb,amsmath,amscd,amsthm}
\usepackage[mathscr]{eucal}
\usepackage{mathrsfs}

\usepackage{mathbbol}

\usepackage{oldgerm,units}
\usepackage{wrapfig,epsfig}

\usepackage{ifthen}
\usepackage{mathbbol}

\usepackage{amsthm}
\usepackage[mathscr]{eucal}
\usepackage{mathrsfs}
\usepackage{mathbbol}
\usepackage{oldgerm,units}
\usepackage{wrapfig}

%\input ../macro/diagram.tex

%%%%%%%%%%%%%%%%%%%%%%% PAGE SETUP %%%%%%%%%%%%%%%%%%%%%%%%%%%%%%%%

%==>\setlength{\oddsidemargin}{0.3in}
%%\setlength{\evensidemargin}{0in}
%==>\setlength{\topmargin}{0in}
%%\setlength{\headheight}{0in}
%%\setlength{\headsep}{0in}
%%\setlength{\footskip}{2pc}
%===>\setlength{\textwidth}{5.8in}
%%
% ==> \setlength{\textheight}{8.35in}
%%
%%\def\topfraction{0.99}
%%\def\textfraction{0.05}
%%\def\floatpagefraction{0.9}
%%\def\bottomfraction{0.99}
%%\def\dbltopfraction{0.99}
%%\def\dblfloatpagefraction{0.8}
%%
%%
%%\setlength{\floatsep}{1pc}
%%\setlength{\textfloatsep}{1pc}
%%\setlength{\dblfloatsep}{\floatsep}
%%\setlength{\dbltextfloatsep}{\textfloatsep}

%%%%%%%%%%%%%%%%%%%%%%% THEOREMS %%%%%%%%%%%%%%%%%%%%%%%%%%%%%%%%

\newtheorem{theorem}{Theorem}[section]

\newtheorem{definition}[theorem]{Definition}

\newtheorem{example}[theorem]{Example}
\newtheorem{remark}[theorem]{Remark}

%\newtheorem{digression}[theorem]{Digression}

%%%%%%%%%%%%%%%%%%%%%%% REFERENCE %%%%%%%%%%%%%%%%%%%%%%%%%%%%%%%%
%\newcommand{\Ref}[1]{(\ref{#1})}
%%%%%%%%%%%%%%%%%%%%%%% SETS %%%%%%%%%%%%%%%%%%%%%%%%%%%%%%%%

\newcommand{\Real}{\mathbb R}

\newcommand{\Net}{\mathbb N}

%%%%%%%%%%%%%%%%%%%%%%% GRUOPES %%%%%%%%%%%%%%%%%%%%%%%%%%%%%%%%

\newcommand{\one}{\mathbb{1}}
\newcommand{\zero}{\mathbb{0}}

%%%%%%%%%%%%%%%%%%%%%%% ALGEBRA %%%%%%%%%%%%%%%%%%%%%%%%%%%%%%%%

%%%%%%%%%%%%%%%%%%%%%%% SUB SETS %%%%%%%%%%%%%%%%%%%%%%%%%%%%%%%%

%%%%%%%%%%%%%%%%%%%%%%% COMBINATORICS %%%%%%%%%%%%%%%%%%%%%%%%%%%%%%%%

%%%%%%%%%%%%%%%%%%%%%%% TROP SETS %%%%%%%%%%%%%%%%%%%%%%%%%%%%%%%%

%%%%%%%%%%%%%%%%%%%%%%% GENERLIZED TROP %%%%%%%%%%%%%%%%%%%%%%%%%%%%%%%%

%\newcommand{\iReal}[1]{^{#1} \Real}

%%%%%%%%%%%%%%%%%%%%%%% GOTIC LETTERS %%%%%%%%%%%%%%%%%%%%%%%%%%%%%%%%

%%%%%%%%%%%%%%%%%%%%%%% TROP LETTERS %%%%%%%%%%%%%%%%%%%%%%%%%%%%%%%%
\newcommand{\trop}[1]{\mathcal{#1}}

\newcommand{\tG}{\trop{G}}

\newcommand{\tT}{\trop{T}}

%%%%%%%%%%%%%%%%%%%%%%% SIGNS %%%%%%%%%%%%%%%%%%%%%%%%%%%%%%%%

%\newcommand{\to}{\rightarrow }

%\newcommand{\iff}{\thSpc \Longleftrightarrow \thSpc}

%%%%%%%%%%%%%%%%%%%%%%% TROP SETS %%%%%%%%%%%%%%%%%%%%%%%%%%%%%%%%

%%%%%%%%%%%%%%%%%%%%%%% TROP RELATION %%%%%%%%%%%%%%%%%%%%%%%%%%%%%%

%%%%%%%%%%%%%%%%%%%%%%% TROP SIGNS %%%%%%%%%%%%%%%%%%%%%%%%%%%%%%%%

% {#1^{\bigtriangledown}}
% {#1^{\bigtriangledown}}

%%%%%%%%%%%%%%%%%%%%%%% TROP DUALITY %%%%%%%%%%%%%%%%%%%%%%%%%%%%%%%%

%%%%%%%%%%%%%%%%%%%%%%% GREEK LETTERS %%%%%%%%%%%%%%%%%%%%%%%%%%%%%%%%

%\newcommand{\dl}{\delta}

%%%%%%%%%%%%%%%%%%%%%%% PERMUTATION %%%%%%%%%%%%%%%%%%%%%%%%%%%%%%%%

%%%%%%%%%%%%%%%%%%%%%%% TROP MATRICES %%%%%%%%%%%%%%%%%%%%%%%%%%%%%%%%

%\newcommand{\ptA}{\prmA{\tA}}

%%%%%%%%%%%%%%%%%%%%%%% NOTATIONS %%%%%%%%%%%%%%%%%%%%%%%%%%%%%%%%

%%%%%%%%%%%%%%%%%%%%%%% NOTATIONS %%%%%%%%%%%%%%%%%%%%%%%%%%%%%%%%

%\newcommand{\tIdA}{\Id'_{_\tT}}
%\newcommand{\tIdB}{\Id''_{_\tT}}

%\newcommand{\tIdA}{\al}
%\newcommand{\tIdB}{\bt}

%\newcommand{\ptId}{p_{_\tT}}

%%%%%%%%%%%%%%%%%%%%%%% TROPICAL OPERATORS %%%%%%%%%%%%%%%%%%%%%%%%%%%%%%%%

%\newcommand{\Dim}{Dim \:}

%%%%%%%%%%%%%%%%%%%%%%% SHORTINGS %%%%%%%%%%%%%%%%%%%%%%%%%%%%%%%%

%%%%%%%%%%%%%%%%%%%%%%% LETTER SHORT %%%%%%%%%%%%%%%%%%%%%%%%%%%%%%%%

%%%%%%%%%%%%%%%%%%%%%%% GRAPH THEORY %%%%%%%%%%%%%%%%%%%%%%%%%%%%%%%%

%%%%%%%%%%%%%%%%%%%%%%% POLYTOPES %%%%%%%%%%%%%%%%%%%%%%%%%%%%%%%%
%%%%%%%%%%%%%%%%%%%%%%% POLYTOPES %%%%%%%%%%%%%%%%%%%%%%%%%%%%%%%%

%%%%%%%%%%%%%%%%%%%%%%% POLYTOPES %%%%%%%%%%%%%%%%%%%%%%%%%%%%%%%%
%%%%%%%%%%%%%%%%%%%%%%% POLYTOPES %%%%%%%%%%%%%%%%%%%%%%%%%%%%%%%%

\hfuzz5pt % Don't bother to report over-full boxes < 5pt
\vfuzz5pt % Don't bother to report over-full boxes < 5pt

\pagestyle{empty}
    \ifx\proof\undefined
    \newenvironment{proof}{
    \smallskip
    \noindent\emph{Proof.}}{\hfill\(\Box\)
    \bigskip
    } \fi

%%%%%%%%%%%%%%%%%%%%%%% MATRIXES %%%%%%%%%%%%%%%%%%%%%%%%%%%%%%%%

%%%%%%%%%%%%%%%%%%%%%%% GREEN %%%%%%%%%%%%%%%%%%%%%%%%%%%%%%%%

%%%%%%%%%%%%%%%%%%%%%%% VECTORS %%%%%%%%%%%%%%%%%%%%%%%%%%%%%%%%

%%%%%%%%%%%%%%%%%%%%%%% VECTORS SPACES %%%%%%%%%%%%%%%%%%%%%%

%%%%%%%%%%%%%%%%%%%%%%% LIMES %%%%%%%%%%%%%%%%%%%%%%%%%%%%%%%%

%%%%%%%%%%%%%%%%%%%%%%%  emphasize %%%%%%%%%%%%%%%%%%%%%%%%%%%%%%%%

%%%%%%%%%%%%%%%%%%%%%%% FLAGES  %%%%%%%%%%%%%%%%%%%%%%%%%%%%%%%%

%\ifthenelse{\equal{\toPrint}{part1}}

\newcommand{\ifdef}[3]{\ifthenelse{\equal{#1}{true}}{#2}{#3}}

%\newboolean{PrintEquation}
%\setboolean{PrintEquation}{true}

%%%%%%%%%%%%%%%%%%%%%%% SINGULARITY %%%%%%%%%%%%%%%%%%%%%%%%%%%%%%%%

%%%%%%%%%%%%%%%%%%%%%%% ABELIAN %%%%%%%%%%%%%%%%%%%%%%%%%%%%%%%%

%%%%%%%%%%%%%%%%%%%%%%% SPACING %%%%%%%%%%%%%%%%%%%%%%%%%%%%%%%%

%\newcommand{\npr}{\\ \indent}

%\def\baselinestretch{1.4}

%%%%%%%%%%%%%%%%%%%%%%% Nomeration %%%%%%%%%%%%%%%%%%%%%%%%%%%%%%%%
\pagenumbering{arabic} \pagestyle{plain}
%\numberwithin{equation}{subsection}
%\numberwithin{equation}{section}
%\setcounter{section}{-1}

\pagestyle{headings}

\textwidth 160mm \textheight 228mm \topmargin -5mm \evensidemargin
0mm \oddsidemargin 0mm

\definecolor{lgray}{gray}{0.90}

\def\mcA{\mathcal A}
\def\mcW{\mathcal W}

\def\mcK{\mathcal K}

\def\mcI{\mathcal I}

\def\({\left(}
\def\){\right)}

\def\Z{{\mathbb Z}}

\def\pipe{{\underset{{\ \, }}{\mid}}}

\def\vsemifield0{$\nu$-semifield$^\dagger$}
\def\vsemiring0{$\nu$-semiring$^\dagger$}

\def\pipe1{{\underset{{1}}{\mid}}}
\def\lmod1{\mathrel  \pipe1  \joinrel \joinrel =}

\def\CFunFF1{\operatorname{CFun} (F,F)}

\def\semiring0{semiring$^{\dagger}$}
\def\Semiring0{Semiring$^{\dagger}$}
\def\Semirings0{Semirings$^{\dagger}$}
\def\semidomain0{semidomain$^{\dagger}$}
\def\semifield0{semifield$^{\dagger}$}
\def\semifields0{semifields$^{\dagger}$}
\def\vsemifields0{$\nu$-semifields$^{\dagger}$}
\def\domain0{domain$^{\dagger}$}
\def\predomain0{pre-domain$^{\dagger}$}
\def\predomains0{pre-domains$^{\dagger}$}
\def\domains0{domains$^{\dagger}$}

\def\vdomains0{$\nu$-domains$^{\dagger}$}

\def\domains0{domains$^\dagger$}

\def\mcH{\mathcal H}
\def\mcM{\mathcal M}
\def\mcK{\mathcal K}
\def\mcN{\mathcal N}

\newcommand{\etype}[1]{\renewcommand{\labelenumi}{(#1{enumi})}}
% \roman, \arabic, \alph
\def\eroman{\etype{\roman}}

\def\pipe{{\underset{{\tG}}{\mid}}}

\def\lmod{\mathrel  \pipe \joinrel \joinrel =}
\def\pipe{{\underset{{\tG}}{\mid}}}

\newtheorem{thm}[theorem]{Theorem}

\newtheorem*{thm*}{Theorem}
\newtheorem{lem}[theorem]{Lemma}
\newtheorem{rem}[theorem]{Remark}
\newtheorem{prop*}{Proposition}
\newtheorem{conj*}{Conjecture}

\newtheorem{prop}[theorem]{Proposition}
\newtheorem{defn}[theorem]{Definition}
\newtheorem*{examp*}{Example}
\newtheorem*{examples*}{Examples}
\newtheorem*{remark*}{Remark}
\newtheorem*{defn*}{Definition}
\newtheorem*{note*}{Note}

\newtheorem{ques}[theorem]{Question}

\def\R{\Real}
\def\Rmax{\R_{\operatorname{max}}}
\def\la{\lambda}
\def\La{\Lambda}
\def\tT{\mathcal T}

\def\tTz{\tT_\zero}

\def\tTM{\tT_{\mathcal M}}

\numberwithin{equation}{section}

\def\M0{M_{\zero}}

\def\pMN{\phi_{\mcM,\mcN}}
\def\pMK{\phi_{\mcK,\mcM}}
\def\pAA{\phi_{\mcA,\mathcal{A}_0}}

\def\PS{P}

\def\Cong{\Phi}

\def\Diag{{\operatorname{Diag}}}

\def\semirings0{semirings$^\dagger$}

\newcommand{\nPS}[1]{\PS_{(!#1)}}
\newcommand{\nPSo}[1]{\nPS{\one}}

\newcommand{\adj}[1]{\operatorname{adj}({#1})}

\begin{document}

%******************************* title ***********************************

\title[$\tT$-semiring pairs ]
{$\tT$-semiring pairs }

%******************************* authors *********************************
%
\author[J.~Jun]{Jaiung~Jun}
\address{Department of Mathematics, State University of New York at New Paltz, New Paltz, NY 12561, USA} \email{jujun0915@gmail.com}
\author[K.~Mincheva]{Kalina~Mincheva}
\address{Department of Mathematics, Tulane University, New Orleans, LA 70118, USA}
\email{kmincheva@tulane.edu}
\author[L.~Rowen]{Louis Rowen}
\address{Department of Mathematics, Bar-Ilan University, Ramat-Gan 52900, Israel} \email{rowen@math.biu.ac.il}

\makeatletter
\@namedef{subjclassname@2020}{%
    \textup{2020} Mathematics Subject Classification}
\makeatother

%******************************* AMS classification ***********************
\subjclass[2020]{Primary  08A05,  14T10, 16Y60,  18A05, 18C10;
Secondary 08A30, 08A72, 12K10, 13C60, 18E05, 20N20}

%******************************* date *************************************

%\date{\today}

%******************************* keywords *********************************
%
%\keywords{Category, system, triple, negation map, morphism, module,
%symmetrization, congruence,  $(-)$-bipotent, height, idempotent,
%supertropical algebra, tropical algebra,
%  hypergroup,  exact
%sequence, chain,   homology, projective, dimension, polynomial,
%semiring, semifield, surpassing relation, semiexact category,
%homological category.}

%{\noindent \underline{\hskip 3cm } \\ File name: \jobname}

%******************************* abstract *********************************
%\thanks{The research of the second author is partially sponsored by Louisiana Board of Regents Targeted Enhancement Grant 090ENH-21. The research of the third author was supported by the ISF grant 1994/20.} 
%\thanks{The authors would like to thank Sergey Sergeev as well as the referee for helpful suggestions in clarifying the results}

\begin{abstract} 
{We develop a general axiomatic theory of algebraic pairs, which simultaneously generalizes several algebraic structures, in order to bypass negation as much as feasible. We investigate several classical theorems and notions in this setting including fractions, integral extensions, and Hilbert's Nullstellensatz. Finally, we study a notion of growth in this context.}
\end{abstract}

\maketitle

\tableofcontents
%%%%%%%%%%%%%%%%%%%%%%%%%%%%%%%%% section %%%%%%%%%%%%%%%%%%%%%%%%%%%%%

% {\small \tableofcontents}

% \setcounter{tocdepth}{1} {\small \tableofcontents}

%%%%%%%%%%%%%%%%%%%%%%%%%%%%%%%%% section %%%%%%%%%%%%%%%%%%%%%%%%%%%%%

\section{Introduction}

Over recent years an effort has been made to understand tropical
mathematics in terms of various algebraic structures. A major role
has been taken by the max-plus algebra $\Rmax$, but with the drawback that
its structure theory is quite limited by the absence of negation.
Gaubert~\cite{Ga} addressed this issue in his dissertation, followed by \cite{AGG}, studying a
construction which we call ``symmetrization'' in \cite{AGR,Row21}. 
Izhakian~\cite{zur05TropicalAlgebra} introduced a modification, later called ``supertropical algebra'' and studied extensively by Izhakian and Rowen \cite{IR,IRMatrices},
and later with Knebusch. 
Meanwhile an extensive literature developed around hyperstructures \cite {BB1,CC, GJL, Ju,  Vi} and fuzzy rings \cite{Dr, DW}.
These constructions were unified by Lorscheid in ``blueprints''
\cite{Lor1, Lor2}, and carried further into a  ``systemic'' algebraic approach  
taken by \cite{Row21}, and developed in \cite{AGR, GaR,JMR,JuR1}
in order to unify classical algebra with
 the algebraic theories of supertropical algebra, symmetrized semirings, hyperfields, and fuzzy
 rings,
 with some success especially in obtaining theorems about
 matrices, polynomials, and linear algebra. 
 
 The idea of systems in brief is to
 consider a formal ``negation map'' $(-)$ on $\mcA$ satisfying all properties of negation
 except  $a-a=0.$ The zero element no longer plays a role, but is
 replaced by the set of \textbf{quasi-zeros} $\mcA^\circ = \{
b+((-)b): b\in \mcA\}.$  In supertropical algebra the ``negation map''
 actually is the identity, and in a semiring where a negation map is lacking, it can
be provided in the symmetrization process.
Another innovation was the ``surpassing relation'' $\preceq$,    to extend equality in most results.

Although the negation map is very useful, this paper addresses the question as to how much semiring theory can be performed only with the ``surpassing relation'' given below in
Definition~\ref{precedeq07}, which is needed for our version of equations. In addition to the philosophical question of the minimal axiomatic framework needed to carry out the theory, we are motivated by new structures generalizing hyperfields which come up naturally in the study of semirings, given in \S\ref{shyp}.

Many of the relevant notions of systems
are formulated without a negation map on $\mcA$,  by elevating $\mcA^\circ$ to the principal structural role, via
the embedding $\mcA^\circ\to \mcA$.
 Then we can formulate relative versions
of algebraic concepts which could be applied to tropical (and other) situations.
In other words, we continue the approach of systems, but with a  different emphasis
which may provide extra intuition, in an effort to obtain the greatest generality in which the algebraic structure theorems of the Artin-Krull-Noether theory are available.
One such direction of the structure theory taken in \cite[\S3 and \S4]{JuR1} was the spectrum of prime congruences.

 Our main focus, extending \cite[\S3 and \S4]{JuR1}, is on the structure of \textbf{pairs} $(\mcA,\mcA_0)$ acted upon by a set $\tT.$
 See Definition~\ref{symsyst} for the formal set-up. In brief, $(\mcA,\mcA_0)$ is a \textbf{$\tT$-semiring pair} when $\mcA_0 
 \subseteq \mcA$ are semirings with compatible $\tT$-actions, and $\tT$ is a multiplicative submonoid of $\mcA$, where $\mcA_0\cap \tT =\emptyset$ and $\tT$ spans $(\mcA,+)$. The   $\tT$-pair $(\mcA,\mcA_0)$ is \textbf{shallow} if $\mcA = \tT \cup \mcA_0. $
 
Likewise, for modules (called ``semimodules'' in the semiring literature), we suppress the negation map, and simply
consider pairs $(\mcM,\mcN)$ of modules over $(\mathcal A, \mcA_0)$,  together with a map $\pMN: \mcN\to \mcM$   where in applications  $\pMN(\mcN) \subseteq \mcM$
could be identified with the quasi-zeroes. %Although the theory of systems  is endowed
%with a monoid $\tT$ that additively generates $\mcA,$ here we start
%without $\tT$ to see how far we can proceed without it, using
%$\mcA^\circ$ as a substitute.
Ideals in algebra are replaced by congruences with the ``twist product''.
We go through the basic structure theory, starting with ``prime'' and ``semiprime''defined by means of the twist product. Then we compare notions of algebraicity,  and compute the growth of some basic pairs. Throughout we search for the
 precise hypotheses necessary for the various theorems.

One also can  manage
 without negation maps when studying
  polynomials (\S\ref{pol}), as well as pairs of  fractions. The Ore condition for semigroups is
  well known, cf.~\cite{Lj}.
Following \cite[p. 349]{Row06} as a model, we also introduce the notion of Ore condition for pairs, under which one obtains the pair of fractions of a $\tT$-semiring pair $(\mcA,\mcA_0)$ (Theorem~\ref{OreP}).

 One goal of any algebraic theory is to obtain some analog of Hilbert's Nullstellensatz, which says that every radical ideal of the affine polynomial algebra is a set of zeroes of polynomials. This is tricky in the semiring set-up because the structure is described in terms of congruences, not ideals, but we propose an approach in Appendix A.
 
\subsection{Main Results}

\begin{nothma}$($Theorem~\ref{semip}$)$
 A congruence $\Cong$ on a $\tT$-semiring pair $(\mcA,\mcA_0)$ is  semiprime if and only if it is the intersection of a nonempty set of prime congruences.
\end{nothma}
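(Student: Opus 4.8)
The plan is to prove the two implications of the biconditional separately; the forward direction (``semiprime $\Rightarrow$ intersection of primes'') carries essentially all the weight, and it is the classical ``radical $=$ intersection of primes'' argument transported to congruences with the twist product.

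For $(\Leftarrow)$ I would argue formally. First, every prime congruence is semiprime, since the semiprime requirement ``$\mfa\mtw\mfa\subseteq\Cong\Rightarrow\mfa\subseteq\Cong$'' is the case $\mfb=\mfa$ of the prime requirement. Then, if $\Cong=\bigcap_{i\in I}\mfp_i$ with each $\mfp_i$ semiprime and $\mfa\mtw\mfa\subseteq\Cong$, one has $\mfa\mtw\mfa\subseteq\mfp_i$ for every $i$, hence $\mfa\subseteq\mfp_i$ for every $i$, hence $\mfa\subseteq\Cong$ — using only that containment of the fixed congruence $\mfa\mtw\mfa$ in an intersection is equivalent to containment in each member. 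Nonemptiness of $I$ is a hypothesis in this direction, so nothing further is needed.

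For $(\Rightarrow)$, given a semiprime $\Cong$ I would set $\widehat\Cong:=\bigcap\{\mfp:\Cong\subseteq\mfp,\ \mfp\ \text{prime}\}$; since $\Cong\subseteq\widehat\Cong$ is automatic, it suffices to prove $\widehat\Cong\subseteq\Cong$, i.e.\ for each pair $\alpha=(c,d)\notin\Cong$ to exhibit a prime congruence $\mfp\supseteq\Cong$ with $\alpha\notin\mfp$. The key object is the set $M=\{\alpha^{\mtw n}:n\ge 1\}$ of twist powers, which is closed under $\mtw$ since $\alpha^{\mtw m}\mtw\alpha^{\mtw n}=\alpha^{\mtw(m+n)}$, and the semiprime hypothesis is exactly what forces $M\cap\Cong=\emptyset$: from $\alpha\notin\Cong$ one gets $\alpha^{\mtw 2}\notin\Cong$, hence $\alpha^{\mtw 2^k}\notin\Cong$ for all $k$ by induction, and since $\Cong$ absorbs twist products ($\gamma\in\Cong$ and $\beta\in\mcA\times\mcA$ imply $\gamma\mtw\beta\in\Cong$, as $\Cong$ is a congruence) it follows that $\alpha^{\mtw n}\notin\Cong$ for every $n$. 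Now apply Zorn's Lemma to the family $\mcF$ of congruences $\mfq\supseteq\Cong$ with $\mfq\cap M=\emptyset$ — nonempty since $\Cong\in\mcF$, and closed under unions of chains because a union of a chain of congruences is again a congruence and misses $M$ if every member does — and let $\mfp$ be a maximal element. That $\mfp$ is prime is the heart of the matter: if $E\mtw F\subseteq\mfp$ but $E\not\subseteq\mfp$ and $F\not\subseteq\mfp$, then $\mfp\vee E$ and $\mfp\vee F$ (the congruences generated by $\mfp$ together with $E$, resp.\ $F$) strictly contain $\mfp$, so maximality gives $\alpha^{\mtw m}\in\mfp\vee E$ and $\alpha^{\mtw n}\in\mfp\vee F$ for some $m,n$; then $\alpha^{\mtw(m+n)}=\alpha^{\mtw m}\mtw\alpha^{\mtw n}\in(\mfp\vee E)\mtw(\mfp\vee F)$, and expanding via distributivity of $\mtw$ over joins into $(\mfp\mtw\mfp)\vee(\mfp\mtw F)\vee(E\mtw\mfp)\vee(E\mtw F)$ — the first three factors lying in $\mfp$ by absorption, the last by hypothesis — yields $\alpha^{\mtw(m+n)}\in\mfp\cap M$, a contradiction. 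Finally $\alpha\notin\mfp$, else $\alpha^{\mtw 2}\in\mfp\cap M$ by absorption; so $\mfp$ separates $\alpha$ from $\Cong$, which shows $\widehat\Cong\subseteq\Cong$ and also supplies the needed prime (so the index set is nonempty). The degenerate case $\Cong=\mcA\times\mcA$, where no such $\alpha$ exists, is disposed of by the paper's convention on the top congruence.

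The main obstacle I anticipate is not a single idea but the supporting bookkeeping for the twist product: associativity, monotonicity of twist powers, distributivity of $\mtw$ over joins of congruences, and the absorption law $\gamma\mtw\beta\in\Cong$ for $\gamma\in\Cong$ — all of which must be in hand from the earlier sections, since both the ``$M$ avoids $\Cong$'' step and the ``maximal $\Rightarrow$ prime'' step rest on them. A second subtlety is matching the chosen definition of \emph{semiprime} to the argument: the sketch above uses the element-wise form $\alpha\mtw\alpha\in\Cong\Rightarrow\alpha\in\Cong$, and if the definition adopted in the paper is the congruence-wise form $\mfb\mtw\mfb\subseteq\Cong\Rightarrow\mfb\subseteq\Cong$ one must first reconcile the two (applying it to $\mfb=\langle\alpha\rangle$ and controlling which elements of $\langle\alpha\rangle\mtw\langle\alpha\rangle$ occur) or else reorganize the Zorn step around twist powers of congruences, at the cost of checking whether those powers are finitely generated so that the chain condition still goes through.
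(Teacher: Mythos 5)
Your $(\Leftarrow)$ direction and the Zorn/maximality machinery are fine, but there is a genuine gap in the $(\Rightarrow)$ direction: you argue that $M=\{\alpha^{\mtw n}:n\ge 1\}$ is disjoint from $\Cong$ by inferring ``$\alpha\notin\Cong$ and $\Cong$ semiprime $\Rightarrow\alpha^{\mtw2}\notin\Cong$,'' and this inference is not available. The paper's notion of semiprime (via Lemma~\ref{semip0}) is the noncommutative one: $(b_1,b_1')\mtw(\mcA\times\mcA)\mtw(b_1,b_1')\subseteq\Cong$ forces $(b_1,b_1')\in\Cong$. From $\alpha\notin\Cong$ you therefore only learn that \emph{some} $\beta\in\mcA\times\mcA$ has $\alpha\mtw\beta\mtw\alpha\notin\Cong$; you do \emph{not} learn that $\alpha\mtw\alpha\notin\Cong$. (Compare matrix units over a field: $M_2(\mathbb F)$ is semiprime yet $E_{12}^2=0$.) So the set of twist powers of $\alpha$ can perfectly well meet a semiprime $\Cong$, and the whole Zorn step collapses because the $M$ you built is the wrong set. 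This is exactly the hypothesis the theorem is \emph{not} entitled to: $(\mcA,\mcA_0)$ is only assumed commutative from the ``Extensions of pairs'' subsection onward, i.e.\ \emph{after} Theorem~\ref{semip}, and the paper explicitly frames this theorem as the generalization of the commutative case ``by using a famous trick of Levitzki.''

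The fix is the Levitzki $m$-sequence, which is what the paper does. Instead of taking the twist powers $\alpha^{\mtw n}$, one inductively builds $s_1=\alpha$ and $s_{i+1}=s_i\mtw a_i\mtw s_i\notin\Cong$, where $a_i$ is the witness supplied by Lemma~\ref{semip0} at stage $i$. This yields a sequence $S=\{s_i\}$ disjoint from $\Cong$ by construction, and it is $S$, not your $M$, that one feeds into Zorn's Lemma. Maximality then gives primality essentially as you argue, with the multiplicativity replaced by the observation that if $\Cong_1\ni s_i$ and $\Cong_2\ni s_j$ with $i<j$, then $\Cong_1\mtw\Cong_2\ni s_{j+1}$. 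Your own closing paragraph senses the difficulty, but the two repairs you float do not close it: applying the congruence-wise form to $\langle\alpha\rangle$ does not reduce to twist powers of $\alpha$, and ``reorganizing around twist powers of congruences'' is precisely where the $m$-sequence has to be invented. As written, your proposal proves the theorem for commutative pairs (or for the strictly stronger ``completely semiprime'' condition $\alpha\mtw\alpha\in\Cong\Rightarrow\alpha\in\Cong$), not for semiprime congruences of general $\tT$-semiring pairs.
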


See Definition~\ref{free0} for ``$(\preceq)$-base.''

\begin{nothmb}$($Theorem~\ref{OreP}$)$ 
Given a $\tT$-pair  $ (\mcA,\mcA_0)$  with $\tT$ satisfying the     Ore condition  with respect to $S$,  one can define an equivalence relation on $\mcA\times S,$ by $(b_1,s_1) \equiv (b_2,s_2)$ iff there are $c,c'\in \mcA$ for which $c b_1=b_2c'$.
 
Write $S^{-1}\mcA$ for  $\{s^{-1}b:= (b,s) : b\in \mcA,\, s\in S\},$  $S^{-1}\mcA_0$ for  $\{s^{-1}b:= (b,s) : b\in \mcA_0,\, s\in S\},$ and $S^{-1}\tT$ for  $\{s^{-1}a:= (a,s) : a\in \tT,\, s\in S\}.$ Then
$(S^{-1}\mcA,S^{-1}\mcA_0)$ is an $S^{-1}\tT$-pair.
\end{nothmb}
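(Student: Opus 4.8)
The plan is to adapt the classical construction of a one-sided ring of fractions, along the lines of \cite[p.~349]{Row06}, being careful that every step uses only the monoid and semiring operations together with the Ore condition; in particular no negation map enters, so plain equality — rather than the surpassing relation $\preceq$ — suffices throughout.

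First I would check that the relation $\equiv$ of the statement is an equivalence relation on $\mcA\times S$. Reflexivity and symmetry are formal; transitivity is the one place that genuinely invokes the Ore condition, since from Ore witnesses for $(b_1,s_1)\equiv(b_2,s_2)$ and $(b_2,s_2)\equiv(b_3,s_3)$ one must produce, by applying the Ore condition at the intermediate denominator $s_2$, a single pair of witnesses establishing $(b_1,s_1)\equiv(b_3,s_3)$. Next I would equip the set $S^{-1}\mcA$ of $\equiv$-classes with semiring operations: to add two fractions, use the Ore condition to put them over a common denominator $s\in S$ (writing $c_1s_1=c_2s_2=:s$) and set $s_1^{-1}b_1+s_2^{-1}b_2:=s^{-1}(c_1b_1+c_2b_2)$; to multiply, use the Ore condition to find $s'\in S$ and $b_1'\in\mcA$ with $s'b_1=b_1's_2$ and set $(s_1^{-1}b_1)(s_2^{-1}b_2):=(s's_1)^{-1}(b_1'b_2)$. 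I would then carry out the routine but lengthy verifications that these operations do not depend on the choice of Ore witnesses or of class representatives, and that they satisfy the semiring axioms, with additive identity represented by $s^{-1}0_{\mcA}$ and multiplicative identity by $s^{-1}s$. The device that replaces, in each such check, the cancellation available in the classical ring case is the common-denominator principle: any finite family of fractions can be put over a single denominator lying in $S$.

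It remains to verify the axioms of a $\tT$-pair (Definition~\ref{symsyst}) for $(S^{-1}\mcA,S^{-1}\mcA_0)$ with the submonoid $S^{-1}\tT$. That $S^{-1}\mcA_0$ is a sub-semiring follows since $\mcA_0$ is a sub-semiring of $\mcA$ stable under the $\tT$-action (hence under multiplication by $\mcA$, as $\tT$ spans $(\mcA,+)$), so the common-denominator formulas keep numerators inside $\mcA_0$; that $S^{-1}\tT$ is a multiplicative submonoid is immediate from the product formula; and that $S^{-1}\tT$ spans $(S^{-1}\mcA,+)$ follows from $s^{-1}b=\sum_i s^{-1}a_i$ whenever $b=\sum_i a_i$ with $a_i\in\tT$, which is the addition formula with trivial Ore witnesses. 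Lifting the compatible $\tT$-actions on $(\mcA,\mcA_0)$ to $S^{-1}\tT$-actions on the pair is then straightforward. The step I expect to be the main obstacle is the disjointness condition $S^{-1}\mcA_0\cap S^{-1}\tT=\emptyset$: one must rule out an identity $s^{-1}b=t^{-1}a$ with $b\in\mcA_0$ and $a\in\tT$, and unwinding $\equiv$ this forces tracing the Ore witnesses back to an equation in $\mcA$ to which one applies $\mcA_0\cap\tT=\emptyset$ together with the stability of $\mcA_0$ under the $\tT$-action. Getting this interaction between the equivalence relation and the $\tT$-pair axioms to work out cleanly is the delicate point, and it may be necessary to impose a mild saturation hypothesis relating $S$ and $\mcA_0$.
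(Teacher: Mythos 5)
Your proposal follows essentially the same route as the paper, which, like you, adapts the Ore localization from \cite[\S 3.1]{Row06} and defers most verifications (well-definedness of $\equiv$, of the operations, and the semiring axioms) to that reference; the lemma preceding Theorem~\ref{OreP} in the paper carries precisely the Ore-transitivity argument you describe, via an explicit sublemma. The one place where you hedge --- the disjointness $S^{-1}\mcA_0\cap S^{-1}\tT=\emptyset$ --- in fact requires no saturation hypothesis: in the paper's lemma the equivalence $(b_1,s_1)\equiv(b_2,s_2)$ is witnessed by elements $a_1,a_2$ lying in $\tT$ itself (not merely in $\mcA$), so an equality $s^{-1}b=t^{-1}a$ with $b\in\mcA_0$ and $a\in\tT$ unwinds to $a_1b=a_2a$ with $a_i\in\tT$; then $a_1b\in\mcA_0$ because $\mcA_0$ is a $\tT$-module, while $a_2a\in\tT$ because $\tT$ is a multiplicative monoid, contradicting $\mcA_0\cap\tT=\emptyset$ from admissibility. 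Thus the exact argument you sketch already closes the gap without any extra hypothesis, matching the paper's (terse) ``straightforward, using the definition of regularity.''
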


\begin{nothmc}
$($\textbf{Artin-Tate for pairs}, Theorem~\ref{AT}$)$
 Suppose $(\mcW\mcW_0)$ is an affine 
 semialgebra pair over $\mcA$
 and has a finite $(\preceq)$-base $B$ over a central semialgebra $\mcK \subset \mcW$. Then $\mcK$ is affine over $\mcA.$ 
\end{nothmc}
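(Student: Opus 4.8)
The plan is to mimic the classical Artin--Tate argument, adapted to the language of $\tT$-semiring pairs. Recall the classical statement: if $C \subseteq B \subseteq A$ are commutative rings with $A$ finitely generated as a $C$-algebra and $A$ finitely generated as a $B$-module, and $C$ is Noetherian, then $B$ is finitely generated as a $C$-algebra. In our setting, $\mcA$ plays the role of the ground structure, $\mcW$ is the affine semialgebra pair over $\mcA$, the central subsemialgebra $\mcK \subset \mcW$ plays the role of $B$, and we must conclude $\mcK$ is affine over $\mcA$. The role of ``finitely generated as a module'' is played by the hypothesis that $\mcW$ has a finite $(\preceq)$-base $B = \{w_1, \dots, w_n\}$ over $\mcK$, meaning every element of $\mcW$ is $\preceq$-dominated by a $\mcK$-combination $\sum k_i w_i$.

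First I would fix a finite set $a_1, \dots, a_m$ of generators of $\mcW$ as a semialgebra over $\mcA$. Writing each generator, and each product $w_i w_j$, in terms of the $(\preceq)$-base, one obtains finitely many structure elements $c_{ij}^{(k)}, d_i^{(k)} \in \mcK$ satisfying relations of the form $a_i \preceq \sum_k d_i^{(k)} w_k$ and $w_i w_j \preceq \sum_k c_{ij}^{(k)} w_k$. Let $\mcK_0 \subseteq \mcK$ be the subsemialgebra of $\mcK$ generated over $\mcA$ by this finite collection $\{c_{ij}^{(k)}, d_i^{(k)}\}$ together with (if needed) coefficients expressing $\one_\mcW$ in the base. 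The heart of the argument is then to show $\mcK \subseteq$ (something controlled by) $\mcK_0$: because the $w_i$ multiply among themselves up to $\preceq$ with coefficients in $\mcK_0$, the $\mcK_0$-submodule generated by $w_1, \dots, w_n$ is closed under multiplication up to $\preceq$, hence every element of $\mcW$ is $\preceq$-approximated by $\sum_i k_i' w_i$ with $k_i' \in \mcK_0$. Applying this to an arbitrary element of $\mcK \subseteq \mcW$ and using that the $w_i$ form a $(\preceq)$-base (so the ``coordinates'' are pinned down, at least up to the surpassing relation and centrality of $\mcK$), one extracts that $\mcK$ itself is a finitely generated $\mcK_0$-module, up to $\preceq$. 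Since $\mcK_0$ is affine over $\mcA$, a finite-module extension of $\mcK_0$ is affine over $\mcA$ — this last step should follow from the integral-dependence / finite-extension machinery developed earlier in the paper (the ``integral extensions'' section) — and therefore $\mcK$ is affine over $\mcA$.

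The main obstacle, and where I expect the real work to lie, is that the classical proof uses equality freely in two crucial places: (1) one needs a \emph{Noetherian} hypothesis on the base so that the submodule $\mcK \subseteq \mcK_0 w_1 + \cdots + \mcK_0 w_n$ is itself finitely generated over the Noetherian ring $\mcK_0$; and (2) one solves linear equations to express coordinates. In the pair setting both must be replaced by $\preceq$-versions: the ``Noetherian'' condition becomes an ascending-chain condition on congruences (the paper works with congruences rather than ideals), and the linear-algebra step must be carried out with the surpassing relation $\preceq$ in place of equality, using that $\mcK$ is central in $\mcW$ to move coefficients past base elements. One must check that replacing $=$ by $\preceq$ does not cause the finiteness to ``leak'' — i.e. that a $\preceq$-spanning set of coordinates still yields a genuinely finitely generated congruence-theoretic structure. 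I would isolate the needed ACC-type hypothesis (quite possibly it must be added as an explicit assumption, or derived from $\mcA$ being ``Noetherian'' in the appropriate congruence sense) and verify that the finite-base hypothesis on $\mcW$ transfers to a finite-base hypothesis on $\mcK$ over $\mcK_0$ purely formally, which is the only genuinely new lemma required.
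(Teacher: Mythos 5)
Your opening moves match the paper's: you introduce a finite set of algebra generators of $\mcW$ over $\mcA$, record the structure ``constants'' expressing both those generators and the products $w_iw_j$ as $\preceq$-combinations of the $(\preceq)$-base, and let $\mcK_0$ (the paper calls it $H$) be the $\mcA$-subalgebra they generate. That much is exactly Theorem~\ref{AT}'s setup.

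Where you diverge is the finishing step, and the divergence is a genuine gap. You then try to run the full classical Artin--Tate argument: show $\mcK$ sits inside $\mcK_0 w_1 + \cdots + \mcK_0 w_n$, invoke a Noetherian/ACC hypothesis to conclude $\mcK$ is a finitely generated $\mcK_0$-module, and finally argue that a finite module extension of an affine semialgebra is affine. You correctly sense that this requires a Noetherian-type hypothesis which is nowhere in the statement, and you propose adding one. But the theorem as stated does not have such a hypothesis, and the paper's proof does not need it. The reason is the paper's definition of \textbf{affine}: $(\mcW,\mcW_0)$ is affine over $(\mcA,\mcA_0)$ if there merely \emph{exists} a finitely generated centralizing extension $\mcW'$ of $\mcA$ with $\mcW' \preceq \mcW$. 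One is not required to show $\mcK$ itself is finitely generated, only that some finitely generated subalgebra $H$ satisfies $H \preceq \mcK$ (in the subset sense: each $c\in\mcK$ dominates some $h\in H$). That is delivered in one line by Proposition~\ref{spans}: since $B$ still $(\preceq)$-spans $\mcW$ over $H$, for any $c\in\mcK$ one writes $\sum h_ib_i\preceq cb_1$ and reads off $h_1\preceq c$ from $\preceq$-independence. No chain condition, no module-finiteness of $\mcK$ over $H$, and no appeal to the integral-extension machinery. In short, the gap is not in your computation of structure constants but in missing that the paper's (weaker-than-classical) definition of affine makes the Noetherian step, and hence your proposed extra hypothesis, unnecessary; Proposition~\ref{spans} is the lemma that replaces it.
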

We need some more preparation for the next result.

A pair $(\mcA, \mcA_0)$ is \textbf{$\preceq$-nondegenerate} if $f(\tT) \not \subseteq \mcA_0$ for any tangible polynomial $f$.

\begin{nothmd}$($Theorem~\ref{sN}$)$
  Suppose $(\mcA, \mcA_0)$ is   a $\preceq_0$ -nondegenerate, shallow semiring pair, and
  \  $(\mcW, \mcW_0)$  is a centralizing extension of $(\mcA,\mcA_0)$, with $y\in \tT_\mcW$ such that $(\mcA[y], \mcA_0[y])$ is tangibly separating.\footnote{(See Example~\ref{Making}(iv) below).} Let 
  $\tT' = \{ a y^i : a \in \tT,  i \in \Net\}.$
  Let $(\mcK, \mcK_0)$ be  the $\tT'$-semifield of fractions of  $(\mcA[y], \mcA_0[y])$. If  $(\mcK, \mcK_0)$  is affine, then
     $y$ is  congruence algebraic\footnote{(See Definition ~\ref{Making}(ii) below).} over the pair
  $(\mcA, \mcA_0)$.

 \end{nothmd}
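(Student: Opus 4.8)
The plan is to establish the contrapositive: supposing $y$ is not congruence algebraic over $(\mcA,\mcA_0)$, I will show that $(\mcK,\mcK_0)$ cannot be affine, contradicting the hypothesis. The guiding model is the one-variable base case behind Zariski's lemma: the rational function field $k(t)$ is never a finitely generated $k$-algebra, the point being that if $k(t)=k[r_1,\dots,r_m]$ with $r_i=p_i/q_i$ and $g=q_1\cdots q_m$, then every element of $k(t)$ would have denominator a power of $g$, which fails for $1/e$ with $e$ an irreducible coprime to $g$.

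First I would convert ``not congruence algebraic'' into workable polynomial structure. If no nontrivial congruence on $(\mcA[y],\mcA_0[y])$ identifies distinct tangible polynomials in $y$, then, using that $(\mcA[y],\mcA_0[y])$ is tangibly separating, one identifies it with the free polynomial pair $(\mcA[\lambda],\mcA_0[\lambda])$, whose tangible monoid is exactly $\tT'=\{ay^i:a\in\tT,\ i\in\Net\}$. This produces a well-defined $y$-degree on $\mcA[y]$, additive on $\tT'$ and controlling products and $\preceq$-sums. Here the $\preceq_0$-nondegeneracy of $(\mcA,\mcA_0)$ is essential: it prevents a tangible polynomial from being absorbed into $\mcA_0$, so that degree and leading-term data genuinely detect inequivalence of elements and of fractions; shallowness confines all coefficient bookkeeping to $\tT\cup\mcA_0$.

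Next I would run the Euclid-style argument inside $\mcK$. Suppose $(\mcK,\mcK_0)=\mcA\langle z_1,\dots,z_m\rangle$ is affine; write $z_j=q_j^{-1}p_j$ with $p_j,q_j\in\mcA[y]$ and set $g=q_1\cdots q_m$. Using the fraction calculus of Theorem~\ref{OreP}, any finite $\preceq$-combination of products of the $z_j$ with coefficients in $\mcA$ introduces no denominator outside the multiplicative monoid generated by the $q_j$, so every element of $\mcK$ has the form $(g^{N})^{-1}h$ for some $h\in\mcA[y]$ and $N\in\Net$. Choose a tangible polynomial $e\in\mcA[y]$ ``coprime'' to $g$, in the sense that its leading/corner data is disjoint from that of $g$ and hence no power $g^{N}$ is a tangible multiple of $e$ in $\mcA[y]$; since $\mcK$ is a $\tT'$-semifield we have $e^{-1}\in\mcK$, and $e^{-1}=(g^{N})^{-1}h$ would force $g^{N}=eh'$ in $\mcA[y]$ (after clearing denominators via Theorem~\ref{OreP}), contradicting coprimality. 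Hence $(\mcK,\mcK_0)$ is not affine, which is the desired contrapositive.

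The step I expect to be the main obstacle is precisely this coprimality/divisibility bookkeeping: $\mcA[y]$ admits no unique factorization, no cancellation and no subtraction, so both ``there exists $e$ coprime to $g$'' and ``$g^{N}$ is not a tangible multiple of $e$'' must be phrased and verified through $\preceq$ and the degree/leading-term invariants constructed in the first step. I expect $\preceq_0$-nondegeneracy to do the real work here — supplying the nonvanishing that certifies the relevant inequivalences and, together with shallowness, guaranteeing enough ``irreducible-like'' tangible polynomials from which to pick $e$ — while the tangibly separating hypothesis is what lets the argument take place in $\mcK$ rather than only in $\mcA[y]$. A secondary technical point is checking that the equivalence relation defining $\mcK$ in Theorem~\ref{OreP} respects $y$-degree, so that the passage from $e^{-1}=(g^{N})^{-1}h$ to $g^{N}=eh'$ is legitimate.
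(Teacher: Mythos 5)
Your outline matches the paper's up to the critical closing step, and your instinct that the coprimality/divisibility argument is the obstacle is correct — but the obstacle is not resolved in your proposal, and the paper's actual proof avoids it entirely by a different device. Both arguments write the affine generators of $(\mcK,\mcK_0)$ as fractions with a common tangible denominator $g(y)$ (the paper uses Proposition~\ref{algint} plus shallowness to take $g$ with coefficients in $\tT$), and both then produce an inverse in $\mcK$ that $g$-powers must ``account for.'' The divergence is in how the contradiction is extracted.

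You try to close by finding $e$ ``coprime'' to $g$ and arguing that $g^N = eh'$ is impossible. As you note, this requires a factorization or cancellation theory in $\mcA[y]$, which does not exist; you do not supply a substitute, so this step would fail. The paper instead uses the definition of transcendental in \ref{nondeg}(ii) (that any relation $f_1(y) \succeq f_2(y)$ specializes to $f_1(b) \succeq f_2(b)$ for every $b \in \mcA$) together with the tangibly separating hypothesis in a pointed way. From $\preceq_0$-nondegeneracy it gets a tangible point $a \in \tT'$ with $g(a)$ tangible; tangibly separating then supplies $a'$ with $y + a' \in \tT'$ and $a + a' \in \mcK_0$. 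Inverting $y+a'$ and clearing denominators yields the single polynomial relation $g(y)^k \succeq (y+a')f(y)$. Now the contradiction comes not from divisibility but from \emph{specialization}: if $y$ were transcendental, this $\succeq$-relation would hold at $y = a$, forcing $g(a)^k \succeq (a+a')f(a) \in \mcA_0$, hence $g(a)^k \in \mcA_0$, contradicting that $g(a)$ is tangible (using Lemma~\ref{sh2} / shallowness). So the ``coprime $e$'' of your sketch is realized as $y+a'$, but its role is to vanish (ghostify) at a chosen tangible point rather than to obstruct a factorization, and the hypotheses $\preceq_0$-nondegeneracy, shallowness, and tangibly separating are each used for exactly one move in this chain. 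Without this substitution-at-$a$ idea, your proposal has a genuine gap at the final step.
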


The Nullstellensatz does not hold in this setting in general (Example~\ref{nonin}), but we prove that in some circumstances (including the classical case and tropical case), a version of the Nullstellensatz holds. 

\begin{nothmd}$($Theorem~\ref{NullS}$)$
Suppose $(\mcW,\mcW_0)$ is an affine $\tT$-semifield pair over a shallow, $\preceq$-nondegenerate $\tT$-semiring pair $(\mcA,\mcA_0)$, and also having the property that  $\tT$-{algebraic} implies integral. Then $(\mcW,\mcW_0)$ is integral over $(\mcA,\mcA_0)$.
\end{nothmd}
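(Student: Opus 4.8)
The plan is to reduce the Nullstellensatz statement (Theorem~\ref{NullS}) to the two inputs already at hand: first, that over a shallow, $\preceq$-nondegenerate $\tT$-semiring pair an affine $\tT$-semifield pair has its generators $\tT$-algebraic over the base, and second, the hypothesis that $\tT$-algebraic implies integral in the situation considered. Concretely, write $(\mcW,\mcW_0) = (\mcA[y_1,\dots,y_n], \mcA_0[y_1,\dots,y_n])$ for tangible generators $y_i\in\tT_\mcW$, using that affineness gives a finite tangible generating set and that the $\tT$-action lets us take these generators in $\tT_\mcW$. I would proceed by induction on the number $n$ of generators, the base case $n=0$ being vacuous.

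For the inductive step, the key maneuver is an Artin--Tate-style argument modeled on Proposition~C (Theorem~\ref{AT}): consider the tower $(\mcA,\mcA_0) \subseteq (\mcA[y_1],\mcA_0[y_1]) \subseteq (\mcW,\mcW_0)$. The outer pair $(\mcW,\mcW_0)$ is an affine $\tT$-semifield pair, hence by the affineness hypothesis together with the structure result cited in Proposition~D (Theorem~\ref{sN}), each $y_j$ is $\tT$-algebraic over $(\mcA[y_1],\mcA_0[y_1])$; applying the standing hypothesis ``$\tT$-algebraic implies integral'' — which I would need to check is inherited by the centralizing extension $(\mcA[y_1],\mcA_0[y_1])$, this being the first place the shallowness and $\preceq$-nondegeneracy are genuinely used — we get that $(\mcW,\mcW_0)$ is module-finite, i.e. has a finite $(\preceq)$-base, over $(\mcA[y_1],\mcA_0[y_1])$. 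Now the only missing piece is that $y_1$ itself is integral over $(\mcA,\mcA_0)$; for this, observe that $(\mcA[y_1],\mcA_0[y_1])$ sits inside the $\tT$-semifield $(\mcW,\mcW_0)$, so $y_1$ is $\tT$-algebraic over $(\mcA,\mcA_0)$ by the same affineness-plus-structure input, and then the hypothesis ``$\tT$-algebraic implies integral'' applied directly over $(\mcA,\mcA_0)$ gives integrality of $y_1$. Combining integrality of $y_1$ over $(\mcA,\mcA_0)$ with the finite $(\preceq)$-base of $(\mcW,\mcW_0)$ over $(\mcA[y_1],\mcA_0[y_1])$ yields, by transitivity of integrality (the pairs version of the usual ``integral over integral is integral'' lemma), that every element of $(\mcW,\mcW_0)$ is integral over $(\mcA,\mcA_0)$.

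Alternatively, and perhaps more cleanly, I would skip the induction and argue in one step: by affineness $(\mcW,\mcW_0)$ is generated by finitely many tangibles, each $\tT$-algebraic over $(\mcA,\mcA_0)$ (structure result), hence each integral over $(\mcA,\mcA_0)$ (hypothesis), and then invoke that a pair generated over $(\mcA,\mcA_0)$ by finitely many mutually commuting integral elements is integral over $(\mcA,\mcA_0)$ — the analogue of the statement that a finitely generated algebra by integral elements is a finite module. This last closure property is exactly the pairs counterpart of classical facts and should already be available (or provable by the twist-product congruence bookkeeping) from the earlier sections on integral extensions.

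The main obstacle I anticipate is not the skeleton above but verifying that the notion of being integral over a pair behaves well under the two operations used: (i) transitivity through the centralizing extension $(\mcA[y_1],\mcA_0[y_1])$, and (ii) the passage from ``finitely many integral generators'' to ``integral as a pair,'' both of which in the classical setting rely on determinant/Cayley--Hamilton arguments that must here be replaced by their surpassing-relation ($\preceq$) analogues via the bi-adjoint and the twist product. Ensuring that the $\preceq$-nondegeneracy hypothesis is what makes the relevant tangible polynomial identities non-vacuous — so that a putative integral dependence relation is not killed by landing in $\mcA_0$ — is the delicate point, and I would isolate it as a lemma before assembling the proof.
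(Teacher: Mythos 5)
Your main inductive argument is essentially the paper's proof: both reduce by induction on the number $n$ of generators, use Artin--Tate (Theorem~\ref{AT}) to see that the $\tT$-semifield of fractions of $(\mcA[y_1],\mcA[y_1]\cap\mcW_0)$ is affine, apply Theorem~\ref{sN} to conclude $y_1$ is congruence algebraic (hence, by the standing hypothesis, integral) over $(\mcA,\mcA_0)$, and then chain via transitivity of integrality for pairs. One caveat on your ``one-step alternative'': Theorem~\ref{sN} only yields $\tT$-algebraicity of a generator $y_i$ over the base once one knows that the fraction semifield of $\mcA[y_i]$ is affine, and establishing that is precisely what the Artin--Tate peeling-off buys you, so the induction cannot actually be skipped.
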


Finally, we study growth of pairs and prove the following. 
   
\begin{nothme}$($Proposition~\ref{Jat}$)$
Any $\tT$-semidomain pair $(\mcA,\mcA_0)$ with subexponential growth has the property that for any $a_1,a_1\in \tT$ there are $b_1,b_2\in \mcA\setminus \mcA_0$ such that $b_1a_1+b_2a_2\in \mcA_0$.  
\end{nothme}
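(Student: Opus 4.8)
The plan is to argue by contradiction: if the conclusion fails for some $a_1,a_2\in\tT$, I produce exponentially many $(\preceq)$-independent elements in bounded degree, against subexponential growth.

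So assume $b_1 a_1+b_2 a_2\notin\mcA_0$ for all $b_1,b_2\in\mcA\setminus\mcA_0$, and fix such $a_1,a_2$. Since growth cannot increase on passing to a sub-pair, I may replace $(\mcA,\mcA_0)$ by the sub-pair generated by $a_1,a_2$ (still a semidomain pair, with subexponential growth) and so assume $\mcA$ is generated by $a_1,a_2$. Let $V_n\subseteq\mcA$ be the additive span of all products of at most $n$ of $a_1,a_2$, so that $\mcA=\bigcup_n V_n$ and $1\in V_0$; let $d(n)$ be the size of a $(\preceq)$-base of $V_n$ in the sense of Definition~\ref{free0}. Subexponential growth means $d(n)^{1/n}\to 1$.

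I claim $d(n+1)\ge 2\,d(n)$ for all $n$, which finishes the proof: it gives $d(n)\ge 2^n d(0)\ge 2^n$, so $(\mcA,\mcA_0)$ has exponential growth, a contradiction. The claim rests on two points. First, for $i=1,2$ multiplication by $a_i$ is faithful for the pair: as $a_i\in\tT$ and $\tT\cap\mcA_0=\emptyset$, the semidomain hypothesis forces $b a_i\in\mcA_0\Rightarrow b\in\mcA_0$, and from this one checks that $b\mapsto b a_i$ sends a $(\preceq)$-base of $V_n$ to a $(\preceq)$-base of $V_n a_i$; hence $V_n a_i$ has a $(\preceq)$-base $B_i$ of size $d(n)$. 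Second, $B_1\cup B_2$ is still $(\preceq)$-independent, so that $V_{n+1}\supseteq V_n a_1+V_n a_2$ contains a $(\preceq)$-independent set of size $2d(n)$ and $d(n+1)\ge 2d(n)$. To see the second point, observe that a nontrivial $(\preceq)$-dependence among $B_1\cup B_2$, once its $V_n a_1$-part and its $V_n a_2$-part are collected separately, takes the form $u\,a_1\preceq v\,a_2+c$ (or the same with the two sides swapped) with $u,v\in V_n$ and $c\in\mcA_0$; since $\preceq$ is compatible with $+$ and $\cdot$ and $\mcA_0+\mcA_0\subseteq\mcA_0$, this relation plays the role that ``moving all terms to one side'' plays when a negation map is available, and cancelling $a_1$ and $a_2$ by the first point yields $b_1,b_2\in\mcA\setminus\mcA_0$ with $b_1 a_1+b_2 a_2\in\mcA_0$, contradicting the assumption. (If two of the monomials underlying $B_1$ and $B_2$ coincide in $\tT$, one reads off such a relation at once from the pair structure, so we lose nothing by taking $B_1$ and $B_2$ disjoint.)

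The step I expect to be the genuine obstacle is this second point, the $(\preceq)$-independence of $B_1\cup B_2$. Because there is no negation map on $\mcA$, one cannot transpose terms across the dependence relation; the whole manipulation must be run through the surpassing relation $\preceq$ and the absorption $\mcA_0+\mcA_0\subseteq\mcA_0$, and one has to confirm that the semidomain hypothesis is robust enough both to separate the $a_1$- and $a_2$-parts of the relation and to cancel them afterwards. One also has to check that the invariant $d(n)$, measured via $(\preceq)$-bases, behaves well under multiplication by $a_i$ and under forming $V_n a_1+V_n a_2$ — this is where Definition~\ref{free0} and the surpassing-relation bookkeeping of the earlier sections are used.
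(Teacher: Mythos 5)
Your instinct that the $(\preceq)$-independence of $B_1\cup B_2$ is the sticking point is correct, and it is a genuine gap rather than a technicality to be smoothed over. A $(\preceq)$-dependence in the sense of Definition~\ref{free0} is a relation $\sum c_i s_i\preceq\sum c_i' s_i$ with some $c_i\not\preceq c_i'$, where the same index set appears on both sides. You cannot ``collect the $V_n a_1$-part and the $V_n a_2$-part'' onto opposite sides of $\preceq$ without negation; the relation you hope to extract, namely $u\,a_1\preceq v\,a_2+c$ with $c\in\mcA_0$, is simply not a consequence of the dependence data you start from. Nothing in the axioms of a surpassing relation, nor the semidomain hypothesis, lets you transpose an additive term from one side of $\preceq$ to the other, so this step does not go through. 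Note also that your invariant $d(n)$ is measured via $(\preceq)$-bases, whereas the paper's growth $d_k$ is defined via rank (minimum number of generators of $\mcW_k$ over $\mcW_{k-1}+{\mcW_0}_k$); these need not coincide in the absence of further hypotheses, which is a second point you would have to reconcile.

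The paper's argument runs in the forward direction and sidesteps the transposition problem entirely. Subexponential growth yields a noncommutative polynomial $f\in\tT[\la_1,\la_2]$ (no constant term) with $f(a_1,a_2)\in\mcA_0$; take such $f$ of \emph{minimal total degree}. Writing $f=g\la_1+h\la_2$ (every monomial of positive degree ends in $\la_1$ or $\la_2$), minimality forces $g(a_1,a_2)\notin\mcA_0$ and $h(a_1,a_2)\notin\mcA_0$, since either inclusion would produce a lower-degree witness. One then takes $b_1=g(a_1,a_2)$, $b_2=h(a_1,a_2)$; the semidomain hypothesis (tangibles regular over $\mcA_0$) is what ensures the degree really drops when you strip the trailing $a_i$, but no rearrangement across $\preceq$ is ever needed because the split $f=g\la_1+h\la_2$ is a purely symbolic decomposition of a single polynomial, not a cancellation. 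Your counting strategy would prove something stronger (an explicit $d(n+1)\ge 2\,d(n)$ inequality), but to make it work you would effectively have to re-derive the minimal-degree trick inside the $(\preceq)$-independence verification, at which point the direct argument is strictly simpler.
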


\section{Basic notions}\label{BN}

  See \cite{Row19} for a relatively brief introduction of systems; more
details are given in \cite{JMR}, \cite{JuR1}, and \cite{Row21}.
Throughout the paper, we let $\mathbb{N}$ be the additive monoid of
nonnegative integers. Similarly, we view $\mathbb{Q}$
(resp.~$\mathbb{R}$) as the additive monoid of rational numbers
(resp.~real numbers). $\tT$ will always denote a multiplicative monoid with $\one.$ We say that $\tT$ \textbf{acts} on a set $\mcA$ if there is a binary operation $\tT \times \mcA \to \mcA$ satisfying $$(a_1 a_2) b = a_1(a_2
b),\qquad \forall a_1, a_2 \in \tT,  \ b \in \mcA.$$

It is appropriate to turn to  the context of universal algebra, where
one is given sets, called ``algebraic structures,'' or ``$\Omega$-algebras,'' with various
operations, relations, and identities. The 0-ary operations can be thought of
distinguished elements.  Rather than stating the
definitions formally, we refer the reader to~\cite{Jac}, and give the
main instances:
 \begin{defn}\label{de1}
\begin{enumerate}\eroman
\item
A \textbf{magma} is a set $S$ with a binary operation denoted $(+)$
(addition) or $(\cdot)$ (multiplication). At times we also require a
neutral element, written as $\zero$ or $\one$ respectively.

A \textbf{semigroup} is a magma $S$ whose given binary operation
satisfies the law of associativity.

\item A \textbf{semiring}
(cf.~\cite{Cos},\cite{golan92}) $(\mathcal A, +, \cdot, \zero, \one)$ is an
additive abelian semigroup $(\mathcal A, +, \zero)$ and
multiplicative semigroup $(\mathcal A, \cdot, \one)$ satisfying
$\zero b = b \zero = \zero$ for all $b \in \mathcal A$, as well as
the usual distributive laws. 

The semiring predominantly used in tropical mathematics has been the
max-plus algebra, where  $\oplus$ designates $\max$, and $\otimes $
designates $+$. However,
 we proceed with the familiar algebraic notation of addition and
multiplication in the setting under consideration.

\item More generally, a \textbf{bimagma} is a semigroup $(S,+)$ which is also a magma
$(S,\cdot)$. 

\item
A (left) $\tT$-\textbf{module} over a set $\tT$ 
 is a semigroup $( \mathcal A,+,\zero)$,  endowed a $\tT$-action satisfying the
following axioms, for all $a\in \tT$ and $b, b_j \in \mathcal A$:

\begin{enumerate}\eroman
\item
$a \zero=\zero a = \zero.$
\item
$a (\sum _{j=1}^u b_j) = \sum _{j=1}^u (a b_j), \ a \in \mathcal A.$

\end{enumerate}

\item Thus, a \textbf{semialgebra} over
a commutative semiring $\mcA$ is an $\mcA$-module $ \mcM$ which is also a bimagma which satisfies 
$$(ay_1)y_2 = a(y_1 y_2) = y_1 (ay_2)$$
for all $a \in \mcA,$ $y_i \in \mcM.$
An example would be a Lie semialgebra, as defined in~\cite{HH}; also cf.~\cite[Definition~10.4]{Row21} and \cite[Definition~3.4]{CGR}.
\item
A   $\tT$-{module} $\mathcal A $ is \textbf{$\tT$-spanned} if there is a given 1:1 map $\psi: \tT\to \mathcal A $ whose image additively (with $\zero$) generates ~$\mathcal A $. In this situation we identify $\tT$ with $\psi(\tT)$, yielding
 a distinguished set $\tT \subseteq  \mathcal A$. We define $\tT_0 = \tT \cup \{\zero\}$.

\end{enumerate}
\end{defn}

We often suppress the operations and distinguished
elements in the notation. When $\tT$ is ambiguous, we write $\tTA$
to indicate that it is affiliated with $\mcA.$

% A \textbf{homomorphism} is a function which preserves all the operations and distinguished elements in the algebraic structure.

A \textbf{$\tT$-module homomorphism} $f: \mathcal A_1 \to \mathcal
A_2$ is a function such that $f(ab_1)=\psi(a)f(b_1)$, $f(b_1+b_2)=f(b_1)+f(b_2)$ for
all $a \in \tT$ and $b_i \in \mathcal A_1$, $\psi$ as in Definition~\ref{de1}(iv)(c). Often
we fix an action $f_\tT$ on $\tT$ and consider only those $f$ whose
restriction to $\tT$ is $f_\tT.$

%\begin{dig} One instance treated in the literature is the property that $b+b' = \zero$ implies $b = b' = \zero,$ called \textbf{zero sum free} in~\cite{golan92}. See \cite{JMT} for vector bundles on a tropical scheme which is locally prime spectra of zero-sum free semirings. Of course usual negation is meaningless in this context.\end{dig}

%
%\begin{note*}
%In this paper, we always assume that   $\mathcal A$ is a
%$\tT$-module, and also
% assume that $\tT$ is a multiplicative monoid,
%leaving the Lie theory for later.
% We can make a $\tT$-module with associative
%multiplication into a semiring by means of
%\cite[Theorem~2.9]{Row21}.
 \subsection{Pairs}

 \begin{defn}\label{symsyst} $ $
 \begin{enumerate}\eroman
     \item 
 A \textbf{pair} $(\mcA,\mcA_0)$ is  a pair of   algebraic structures (which should be clear by the context) $\mathcal A$ and $\mathcal A_0$, together with a given homomorphism $\phi_{\mcA,\mcA_0} :\mcA_0\to \mcA.$    
  \item 
 A  $\tT$-\textbf{pair}  is a pair $(\mcA,\mcA_0)$ of  $\tT$-modules over the set $\tT$.  A  $\tT$-pair $(\mcA,\mcA_0)$  is \textbf{admissible} if $\mcA$ is a $\tT$-spanned $\tT$-module and $\phi_{\mcA,\mcA_0}$ is an injection,
  and identifying $\mcA_0$ with the image of $\phi_{\mcA,\mcA_0},$ we have $\mcA_0\cap \tT =\emptyset$. The elements of $\tT$ are called \textbf{tangible}.
 \item  The admissible pair $\tT$-pair $(\mcA,\mcA_0)$ is \textbf{shallow} if $\mcA = \tT \cup \mcA_0. $
 \item 
 A  \textbf{$\tT$-semiring pair}  is an admissible $\tT$-pair $(\mcA,\mcA_0)$ where $\mcA$ is an
(associative) semiring.
A  \textbf{$\tT$-semifield pair} is a $\tT$-semiring pair $(\mcA,\mcA_0)$ where $\tT$ is a group.
\item 
 A $\tT$-pair  $(\mcA,\mcA_0)$ is \textbf{generated} by $S\subseteq\mcA$ if every sub-$\tT$-pair of $\mcA$ containing $S$ and $\phi_{\mcA,\mcA_0}(\mcA_0)$ is $\mcA.$ $(\mcA,\mcA_0)$ is \textbf{finitely generated} if it is generated by a finite set. 
 \end{enumerate}
\end{defn}

We assume throughout this paper that 
$(\mcA,\mcA_0)$ is an admissible $\tT$-semiring pair.

\begin{example}\label{Making}$ $ \begin{enumerate}
 \eroman \item  
(The classical case) $\mcA$ is an algebra and $\mcA_0=\mcI$
 is an ideal of $\mcA$. Then we can take $\phi_{\mcA,\mcA_0}$ to be the identity $\mcA_0\to\mcA$ and consider $(\mcA,\mcA_0)$ as
 $\mcA/ \mcI$. If $\mcI$ is a prime ideal, we could have $\tT = \mcA \setminus \mcI,$ a monoid, and the pair  $(\mcA,\mcA_0)$  is shallow
\item 
(Doubling; analogous to symmetrization in \cite{Row21}). This is a way to create an admissible pair, for 
any additive semigroup $(A,+,0)$. We  
define $\mcA = A\times A,$ $\mcA_0 = \{(a,a): a \in A\}$, $\tT = (A \times 0 )\cup (0 \times A) $ and $\pAA ,$ the identity map.
diagonal. 
 \item 
(The supertropical case) $3a = 2a,$ in the sense that $a+a+a=a+a$ for all $a\in \mcA$, and
 $\mcA_0 =   \{a+a: a\in\tT
 \} =     \{ma: m>1, \ a\in\tT
 \}$, written as $A^\nu$ in the literature.  The pair  $(\mcA,\mcA_0)$ is shallow.
  \end{enumerate}
\end{example}

\subsection{Surpassing relations}

 We  next  provide the pair  with a \textbf{surpassing relation}
$\preceq$ ( \cite[Definition~1.31]{Row21} and also  described in
\cite[Definition~2.10]{JuR1}).

\begin{defn}\label{precedeq07}
A \textbf{surpassing relation} on a $\tT$-pair $(\mathcal A, \mcA_0)$,
denoted
  $\preceq$, is a partial preorder satisfying the following, for elements $b_i \in \mathcal A$:

  \begin{enumerate}
 \eroman
    \item  $\zero \preceq c$
 for any $c\in \mcA_0$.
  \item If $b_1 \preceq b_2$ and $b_1' \preceq b_2'$ for $i= 1,2$ then  $b_1 + b_1' \preceq b_2
   + b_2'.$
    \item   If  $a \in \tT$ and $b_1 \preceq b_2$ then $a b_1 \preceq ab_2.$
    \item      $a \preceq b $ for $a,b \in   \tT$
    implies $a=b.$ (In other words, surpassing restricts to equality on tangible elements.)
 %  \item    $a^\circ \not \preceq b $ for any $b \in \tT.$
   \end{enumerate}
% A \textbf{surpassing PO} is a surpassing relation that is a PO.
A \textbf{strong surpassing relation} on $\tT$-pair $(\mathcal A, \mcA_0)$ is a surpassing relation   satisfying the following stronger version of $(\mathrm{iv})$: If $b  \preceq a $ for $a \in \tT$ and $b \in \mathcal A$, then $b=a$.
 
%Recall from \cite[Definition~1.28]{Row21}   that $\tT$-triple

\end{defn}

The justification for these definitions is given in
\cite[Remark~1.34]{Row21}. In brief, in proving theorems about  pairs, we often need
equations in tangible elements to be weakened, where $\mcA_0$ takes on the role of ``zero.''

%; in \S\ref{unalg} we shall   fit it into
%the context of universal algebra.

\begin{example} Our main example of a
surpassing relation, denoted $\preceq_0,$ is given by $b_1\preceq b_2$ iff $b_2 = b_1 +y$ for some $y\in \mcA_0.$
\end{example}
This can be defined on any pair, and matches the definition
of systems.

\begin{lem}\label{sh2} If $b\preceq_0 a$ for $a \in \tT$, then  $b\notin \mcA_0$. 

Consequently, if the admissible $\tT$-pair $(\mathcal A, \mcA_0)$ is shallow, then $\preceq_0$ is a strong surpassing relation.
\end{lem}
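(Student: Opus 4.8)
The lemma has two assertions. The first is that $b \preceq_0 a$ with $a \in \tT$ forces $b \notin \mcA_0$. My plan is to argue by contradiction: suppose $b \in \mcA_0$. By the definition of $\preceq_0$ in the Example preceding the lemma, $b \preceq_0 a$ means $a = b + y$ for some $y \in \mcA_0$. Since $\mcA_0$ is (the image of) a sub-semigroup under addition — it is an algebraic substructure of $\mcA$, and in all the relevant instances $\mcA_0$ is closed under $+$ — we get $a = b + y \in \mcA_0$. But admissibility of the $\tT$-pair gives $\mcA_0 \cap \tT = \emptyset$ (Definition~\ref{symsyst}(ii)), contradicting $a \in \tT$. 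Hence $b \notin \mcA_0$. The one point to be careful about is exactly why $b + y \in \mcA_0$: this uses that $\phi_{\mcA,\mcA_0}$ is a homomorphism of additive semigroups and (after the identification) $\mcA_0$ is closed under addition in $\mcA$; I would state this closure explicitly, since it is the crux of the first part.

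For the second assertion, assume $(\mcA,\mcA_0)$ is shallow, so $\mcA = \tT \cup \mcA_0$. I need to verify that $\preceq_0$ satisfies the strong version of axiom~(iv): if $b \preceq_0 a$ with $a \in \tT$ and $b \in \mcA$, then $b = a$. By shallowness, $b \in \tT$ or $b \in \mcA_0$. The case $b \in \mcA_0$ is excluded by the first part of the lemma. So $b \in \tT$, and then ordinary axiom~(iv) of a surpassing relation — which $\preceq_0$ satisfies, as noted after its definition — gives $b = a$. It remains to remark that $\preceq_0$ is indeed a surpassing relation in the first place (it is a partial preorder satisfying (i)--(iv)); the Example already asserts this matches the systemic definition, so I would simply cite that, or quickly check (i)--(iv) if the referee wants it: (i) holds since $c = \zero + c$; (ii) by adding the witnessing elements of $\mcA_0$; (iii) since $a(b_1 + y) = ab_1 + ay$ and $ay \in \mcA_0$ by the $\tT$-module axioms; (iv) is exactly the first part specialized to $b \in \tT$, forcing $y \in \mcA_0$ with $a = b + y$ both tangible, hence (using the first part again, or directly) $b = a$.

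The main obstacle, such as it is, is purely bookkeeping: pinning down that $\mcA_0$ is additively closed in $\mcA$ (so that $b + y \in \mcA_0$) and that $\preceq_0$ genuinely meets all four surpassing-relation axioms before one can even speak of upgrading (iv) to its strong form. Neither is deep, but both should be spelled out rather than taken for granted, because the whole content of the lemma lives in the interaction between "$\mcA_0$ absorbs sums" and "$\mcA_0 \cap \tT = \emptyset$." Once those are in hand, the contradiction in part one and the case split in part two are immediate.
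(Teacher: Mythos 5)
Your proof is correct and follows exactly the paper's argument: assume $b\in\mcA_0$, write $a = b+y$ with $y\in\mcA_0$, use additive closure of $\mcA_0$ to land $a\in\mcA_0\cap\tT=\emptyset$ for a contradiction, then for the shallow case split $\mcA=\tT\cup\mcA_0$, rule out the $\mcA_0$ branch by part one, and invoke axiom (iv). The extra care you take in flagging additive closure of $\mcA_0$ and in verifying that $\preceq_0$ is a surpassing relation in the first place is sensible bookkeeping that the paper leaves implicit, but it does not change the route.
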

\begin{proof}
 If $b\in \mcA_0$ then for some $y \in \mcA_0,$ $a = b+ y \in \mcA_0 \cap \tT,$ a contradiction.
 
 Hence, for $(\mathcal A, \mcA_0)$ shallow, $b\in \tT$, so $b=a$.
\end{proof}
\subsection{Negation maps and Property N}

In some cases we can define 
the negation map, the mainstay of \cite{Row21}. A \textbf{negation map}
$(-)$ on $(\mcA,\mcA_0)$ is
 an
additive automorphism of order $\le 2$ satisfying \begin{equation}\label{eq: neg}(-)(bb') =
((-)b)b' = b((-)b'), \quad b+((-)b) \in \mcA_0,  \quad \forall b,b' \in
\mcA,\end{equation}
and $(-)\mcA_0 = \mcA_0.$ When $\mcA$ is a $\tT$-module we also require $(-)$ to be defined on $\tT$, such that $$(-)(ab) = ((-)a)b = a((-)b),\qquad  \forall a\in \tT, \quad b\in \mcA.$$  

We write $b(-)c$ for $b+((-)c)$.
Thus $\mcA_0$ contains the set of
\textbf{quasi-zeros}, denoted as $\mcA^\circ := \{ b(-)b: b\in \mcA\}.$ Often one has $\mcA_0 = \mcA^\circ.$
 
\begin{lem} [{\cite[Lemma~2.11]{JMR}}]\label{nab} If $b_1\preceq_\zero b_2$
in a pair with a negation map, then $b_2 (-) b_1 = b_1 (-) b_2 \succeq
\zero.$   
\end{lem}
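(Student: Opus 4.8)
The plan is to unwind both the relation $\preceq_\zero$ and the abbreviation $b(-)c := b+((-)c)$, and then run a two-line computation with the negation-map identities \eqref{eq: neg}. First I would extract from $b_1 \preceq_\zero b_2$ an element $z$ with $b_2 = b_1 + z$; the point to be careful about is that $z$ should be taken to be a quasi-zero, $z = c(-)c$ for some $c \in \mcA$, so that $(-)z = z$ --- indeed, since $(-)$ is additive of order $\le 2$, applying it to $c + ((-)c)$ returns $c + ((-)c)$. I expect this to be the only non-formal point, and hence the main obstacle: it is precisely the $(-)$-invariance of the witness $z$ that forces $b_2(-)b_1$ and $b_1(-)b_2$ to agree.

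With that in hand the computation is immediate. Using commutativity and associativity of $+$,
\[
b_2(-)b_1 \;=\; b_2 + ((-)b_1) \;=\; \bigl(b_1 + ((-)b_1)\bigr) + z ,
\]
while, using that $(-)$ is additive,
\[
b_1(-)b_2 \;=\; b_1 + ((-)b_2) \;=\; b_1 + ((-)b_1) + ((-)z) \;=\; \bigl(b_1 + ((-)b_1)\bigr) + z ,
\]
the last step by $(-)z = z$. Hence $b_2(-)b_1 = b_1(-)b_2$, and this common value is $(b_1(-)b_1) + z$, a sum of two quasi-zeros.

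It remains to see that this element surpasses $\zero$. Both $b_1(-)b_1$ and $z$ lie in $\mcA^\circ \subseteq \mcA_0$, and $\mcA_0$ is an additive subsemigroup of $\mcA$, so $(b_1(-)b_1)+z \in \mcA_0$. By Definition~\ref{precedeq07}(i), $\zero \preceq c$ for every $c \in \mcA_0$; taking $c = b_2(-)b_1$ yields $b_2(-)b_1 \succeq \zero$, completing the argument. Everything besides the choice of a $(-)$-invariant witness is routine bookkeeping with \eqref{eq: neg} and the surpassing-relation axioms.
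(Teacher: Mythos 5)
Your proof is correct and takes essentially the same approach as the paper's: the paper also writes the witness as a quasi-zero $c^\circ$ and computes $b_2(-)b_1 = (b_1+c)^\circ = b_1(-)b_2$, relying on exactly the $(-)$-invariance of the witness that you flag as the crux. Your write-up is a bit more explicit about why the common value lies in $\mcA_0$ and hence surpasses $\zero$, but the underlying argument is identical.
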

\begin{proof}
Write $b_2 = b_1 + c^\circ.$ Then $b_2 (-)b_1 = (b_1+c)^\circ = b_1 (-)b_2 .$
\end{proof}

 A main illustration of a negation map having a different nature is  as follows:

\begin{example}
 Hypersystems $(\mcA, \Hy, (-), \subseteq)$ were defined in \cite{AGR}, where $\mcA_0 = \{
S: \zero \in S\},$ and $\preceq = \subseteq.$
\end{example}

\subsubsection{Weaker versions of negation maps}

Other properties may suffice when we do not have a negation map at our disposal.

\begin{definition}$ $
\begin{enumerate}
 \eroman 

 \item 
 We say that an admissible $\tT$-pair $(\mcA,\mcA_0)$ satisfies
 \textbf{Property N}  if for each $a\in \tT$, there is  $a'\in \tT$ such that $a+a'\in \mcA_0$.
 $(\mcA,\mcA_0)$ is \textbf{neg-compatible} if $a'$ is unique.

 \item A pair satisfying  Property N is \textbf{tangibly separating} if it
satisfies the condition:
 \begin{enumerate}
     \item For each $c\ne a\in \tT$, there is  $a'\in \tT$ such that $c+a'\in \tT$ and $a+a'\in \mcA_0$.
\end{enumerate}
 \end{enumerate}
\end{definition}

\begin{remark}
Given a  neg-compatible, admissible $\tT$-pair $(\mcA,\mcA_0)$, one can define a uniquely quasi-negated triple (\cite[Definition~2,13]{Row21}) $(\mcA,\tT,(-))$ by defining $(-)b$ as follows:

Write $b = \sum a_i$ for $a_i \in \tT$, and define $(-)b: = \sum a+i'$, where   $a_i+a_i'\in \mcA_0.$

In the other direction, given a triple $(\mcA,\tT,(-))$ one defines $ \mcA_0=\mcA^{\circ} $
to get a systemic $\tT$-pair.

Thus, in a sense, neg-compatible $\tT$-pairs only redefine triples 
from \cite{Row21} and from \cite{JuR1}. The set-up here is more
general, and our main objective  is to recast the theory
of systems to see when
negation maps (and ``systemic'') can be omitted, and only to utilize the surpassing relation.
 \end{remark}

Here are some versions of classical structures which need 
not even satisfy Property N, but
at times one can obtain a negation map on a $\tT$-submodule $S$ of $\mcA$, where \eqref{eq: neg} holds on $S$, and $(-)(S\cap \mcA_0) = (S\cap \mcA_0).$ In this case we call $(-)$ a \textbf{partial  negation map} (on $S$). We give some examples motivated by \cite{CGR}.

\begin{example} $ $
\begin{enumerate}\eroman
    \item An \textbf{exterior pair}  is  an admissible $\tT$-pair  $(\mcA,\mcA_0)$  with
    $x^2 \in \mcA_0,$
$xy+yx \in \mcA_0$, for each $x,y\in \mcA.$

 In \cite[\S 4]{CGR}, a partial  negation map is defined
     on the tensors of degree $\ge 2$ in the tensor semialgebra for the exterior pair, by $(-)v\otimes w = w \otimes v$.

 \item A \textbf{Lie pair} is a   pair  $(\mcA,\mcA_0)$ with   \textbf{Lie multiplication}  $[xy]$ satisfying for all $x,y,z\in \mcA$, $y_0 \in \mcA_0$:
 \begin{itemize}
     \item $[xx]\in \mcA_0$,
     \item $[xy_0]\in \mcA_0$,
   \item $[xy]+[yx] \in  \mcA_0$,
      \item $[[xy]z]=[z[yx]],$
      \item $[[xy]z] \preceq [x[yz]]+[[zx]y].$

 \end{itemize}
  \end{enumerate}
  \end{example}

\subsection{Semi-hyperrings}\label{shyp}

Furthermore we can weaken the requirement of having the hyperring negation.

 \begin{defn}\label{hyp} A \textbf{semi-hypergroup} is a set
$(\Hy,\boxplus)$ where
\begin{enumerate}
   \item \label{hyp1}
$\boxplus$ is a commutative binary operation $\Hy \times \Hy \to
\nsets (\Hy)$ (the set of non-empty subsets of $\Hy$), extended
elementwise to $\nsets (\Hy),$ i.e.,
$$S_1 \boxplus S_2 = \{ a_1 + a_2: \ a_1\in S_1, \ \ a_2\in S_2\},$$
which also is associative in the sense that if we
 define
\[ a \boxplus S = S\boxplus a =\bigcup _{s \in S} \ a \boxplus s,
% \quad S\boxplus S' =\bigcup_{s \in S} \ s \boxplus S',
\]
 then $(a_1
\boxplus a_2) \boxplus a_3 = a_1 \boxplus (a_2\boxplus a_3)$ for all
$a_i$ in $\Hy.$
% (We view $\nsets (\Hy)$ as a semigroup with $ \boxplus$ defined
% elementwise.)
  \item \label{hyp2} We adjoin an element $\hyzero$ called \textbf{the hyperzero}, which is the neutral element:
    $\hyzero \boxplus a =a$.

  \item \label{hyp3} A \textbf{semi-hyperring} is a semi-hypergroup  $\Hy$ including an absorbing  hyperzero $\hyzero$, with multiplication by $\Hy$ in $\nsets$ distributes over addition.
    \end{enumerate}
\end{defn}
\begin{example}\label{precmain0}$ $
Suppose $\tT$ is a semi-hypergroup $\Hy =(\mcH,\boxplus, \zero)$, and $\mathcal{A}$
the subset of the power set of $\mcH$ additively generated by $\{
\{a\} : a\in \mcH\},$ addition defined elementwise. We define $\preceq$ on~$\mcA$ by putting $S_1\preceq S_2$ iff $S_1\subseteq S_2.$ There are two choices for  $\mcA_0$:
\begin{enumerate}\eroman
    \item   $\mcA_0 = \{
S: \zero \in S\}.$
\item    $\mcA_0 = \{
S: \ | S|\ge 2\}.$
\end{enumerate}
 (i) is the customary definition, following {\cite[Definition~2.16]{JuR1}, \cite[Definition~1.70]{Row21}}, and yielding a system, called a  hypersystem, in which  
$(-)a = -a$. However (ii) has the advantage that the pair $(\mcA, \mcA_0)$ is shallow.
\end{example}

We   have the following extension of an idea of Krasner:

 \begin{prop}[cf.~{\cite{krasner}}]\label{prop-CC}
 Suppose $R$ is a commutative semiring
 having a multiplicative subgroup~$G$.
 Then the set of cosets $\mathcal H:=R/G=\{ [r] = rG: r\in R\}$,
 equipped  with the multivalued addition
$$[ r] \boxplus [{r'}] = \{ [{x+x'}] : x\in rG,\ x' \in
 r'G\},$$
 and the multiplication inherited from $R$,
 is a semi-hyperring. In particular, $\mathcal H$
 is a semi-hyperfield if $R$ is a semifield.\end{prop}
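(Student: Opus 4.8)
The plan is to verify the semi-hyperring axioms of Definition~\ref{hyp} directly for $\mathcal H = R/G$, relying on the underlying semiring structure of $R$. First I would check that $\boxplus$ is well-defined and commutative: commutativity is immediate from commutativity of $+$ in $R$, and well-definedness means $[r]\boxplus[r']$ depends only on the cosets $rG,r'G$, which is built into the definition since we range over all $x\in rG,\ x'\in r'G$. The hyperzero is $[\zero] = \zero G = \{\zero\}$ (note $\zero G = \{\zero\}$ since $\zero g = \zero$), and one checks $[\zero]\boxplus[r] = \{[\zero + x']: x'\in rG\} = \{[x']: x'\in rG\} = \{[r]\}$, so $[\zero]$ is neutral; it is absorbing because $[\zero]\cdot[r] = [\zero r] = [\zero]$.

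The substantive point is associativity of $\boxplus$ in the sense of Definition~\ref{hyp}(\ref{hyp1}), i.e.\ $([r_1]\boxplus[r_2])\boxplus[r_3] = [r_1]\boxplus([r_2]\boxplus[r_3])$ as subsets of $\mathcal H$. Unwinding the definitions, the left side is $\bigcup_{[s]\in [r_1]\boxplus[r_2]} [s]\boxplus[r_3]$, which equals the set of all $[x_1+x_2+x_3]$ where $x_i$ ranges over $r_iG$; symmetrically for the right side. So both sides equal $\{[x_1+x_2+x_3]: x_i\in r_iG,\ i=1,2,3\}$, using associativity of $+$ in $R$. The one technical nuance is that in forming $[s]\boxplus[r_3]$ we pick a representative $s = x_1+x_2$ and then range over $sG$, not just over $x_1+x_2$; but since $(x_1+x_2)g = x_1g + x_2g$ by distributivity, and $x_ig\in r_iG$ whenever $x_i\in r_iG$, replacing $s$ by $sg$ simply reindexes the same family of triples — so nothing is lost or gained, and the two iterated unions genuinely coincide. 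I would spell this out carefully since it is the crux.

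Next I would verify distributivity of multiplication over $\boxplus$ (Definition~\ref{hyp}(\ref{hyp3})): $[t]\cdot([r]\boxplus[r']) = \{[t(x+x')]: x\in rG, x'\in r'G\}$, and $[t]\boxplus$-distributes because $t(x+x') = tx+tx'$ with $tx\in trG$, $tx'\in tr'G$, giving $\{[tx+tx']\} = [tr]\boxplus[tr'] = [t][r]\boxplus[t][r']$; again one must note that ranging over $trG$ versus over $tx$ for $x\in rG$ yields the same coset-level family, by the same $g$-reindexing argument. Multiplicative associativity and the unit $[\one]$ descend trivially from $R$ since multiplication on $\mathcal H$ is just the honest (single-valued) multiplication induced by $R$ — here one uses that $G$ is a subgroup, so $rG\cdot r'G = rr'G$ is a single coset, making $[r][r'] = [rr']$ unambiguous. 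Finally, for the ``in particular'' clause: if $R$ is a semifield then every $r\ne\zero$ has an inverse, hence $[r] = rG$ has inverse $[r^{-1}]$, so $\mathcal H\setminus\{[\zero]\}$ is a group under multiplication and $\mathcal H$ is a semi-hyperfield. The main obstacle is purely bookkeeping: making the nested-union manipulations in associativity and distributivity precise so that the choice of coset representative visibly does not matter; there is no conceptual difficulty once distributivity $x g + x' g = (x+x')g$ in $R$ is invoked.
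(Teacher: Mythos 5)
Your proof is correct and takes the same direct-verification approach as the paper, which essentially defers to Krasner's arguments (noting they never use negation), checks the two distributivity computations, and declares associativity ``clear.'' You usefully supply the detail the paper omits, correctly identifying the associativity check as the crux and that it hinges on the distributivity identity $(x_1+x_2)g = x_1g + x_2g$ in $R$ to reindex coset representatives so that the two iterated unions coincide.
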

\begin{proof} The proofs of \cite{krasner} do not use negation:
$$(a_1+a_2) ([r] )) =  \{ (a_1+a_2)[x] : x\in rG\} =  \{ [{(a_1+a_2)x}] : x\in rG\} =[(a_1+a_2)[r]];$$
 \begin{equation}\begin{aligned} a ([r] \boxplus [{r'}]))& =  \{ a[{x+x'}] : x\in rG,\ x' \in
 r'G\}\\ & =  \{ [{ax+ax'}] : x\in rG,\ x' \in
 r'G\} =[a [r] \boxplus a[{r'}]);\end{aligned}\end{equation}
associativity also is clear.
\end{proof}

Let us take this one step further.

 \begin{prop}\label{prop-CCh}
 Suppose $\Hy$ is a commutative semi-hyperring
 having a multiplicative subgroup~$G$. For $S \subseteq \mathcal{P}(\Hy),$ define the coset $GS = \{ as: a\in \Hy, \ s\in S\}.$

 \begin{enumerate}\eroman
     \item
 Then the set of cosets $\mathcal H:=\Hy/G=\{ [a] = aG: a\in \Hy\}$,
 equipped  with the multivalued addition
$$[ a] \boxplus [{a'}] = \{ [{x+x'}] : x\in aG,\ x' \in
 a'G\},$$
 and the multiplication inherited from $R$,
 is a semi-hyperring $\mathcal H$, which we denote as $\Hy/_{\operatorname{hyp}}G$. In particular, $\mathcal H$
 is a semi-hyperfield if $\Hy$ is a semi-hyperfield.

 Furthermore, if $G\subseteq \hat{G}$ are subgroups of $\Hy$, then   $$\Hy/_{\operatorname{hyp}}\hat G \cong (\Hy/_{\operatorname{hyp}}G)/_{\operatorname{hyp}}\hat G.$$
 \end{enumerate}
 \end{prop}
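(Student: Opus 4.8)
The plan is to prove the two assertions of Proposition~\ref{prop-CCh} in turn, first the structural claim that $\Hy/_{\operatorname{hyp}}G$ is again a commutative semi-hyperring (a semi-hyperfield when $\Hy$ is one), and then the ``third isomorphism theorem'' comparing $\Hy/_{\operatorname{hyp}}\hat G$ with the iterated quotient. For the first part I would mimic the proof of Proposition~\ref{prop-CC}: one checks that $\boxplus$ is well-defined on cosets (independent of the representatives $a,a'$), commutative, and that $[\hyzero]=G\cdot\hyzero$ serves as an absorbing neutral element. The only genuine point is associativity of $\boxplus$, which I would reduce to associativity of $\boxplus$ in $\Hy$ by unwinding $([a]\boxplus[a'])\boxplus[a'']$ as the set of all $[x+x'+x'']$ with $x\in aG$, $x'\in a'G$, $x''\in a''G$ — using that a coset of $G$ is closed under multiplication by $G$ and that $G$ consists of units, so scaling a representative by an element of $G$ does not change the coset. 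Distributivity of multiplication over $\boxplus$ is copied verbatim from the displayed computations in Proposition~\ref{prop-CC}, since those proofs ``do not use negation.'' If $\Hy$ is a semi-hyperfield, every nonzero $[a]$ has inverse $[a^{-1}]$, so $\Hy/_{\operatorname{hyp}}G$ is a semi-hyperfield.

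For the isomorphism, I would produce an explicit map. Given $G\subseteq\hat G\subseteq\Hy$, set $\mathcal H_1=\Hy/_{\operatorname{hyp}}G$ and note that the image $\bar G:=\{[a]_G: a\in\hat G\}$ of $\hat G$ is a multiplicative subgroup of $\mathcal H_1$, so $\mathcal H_1/_{\operatorname{hyp}}\bar G$ is defined. Define $\Psi:\Hy/_{\operatorname{hyp}}\hat G\to \mathcal H_1/_{\operatorname{hyp}}\bar G$ by $\Psi([a]_{\hat G}) = [\,[a]_G\,]_{\bar G}$. The first task is well-definedness: if $a'=ag$ with $g\in\hat G$ then $[a']_G$ and $[a]_G$ differ by $[g]_G\in\bar G$, hence have the same class mod $\bar G$. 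Injectivity: if $[\,[a]_G\,]_{\bar G}=[\,[b]_G\,]_{\bar G}$ then $[b]_G=[g]_G[a]_G=[ga]_G$ for some $g\in\hat G$, so $b\in gaG\subseteq \hat G\cdot a$, giving $[a]_{\hat G}=[b]_{\hat G}$. Surjectivity is immediate since every class in $\mathcal H_1/_{\operatorname{hyp}}\bar G$ is $[\,[a]_G\,]_{\bar G}$ for some $a\in\Hy$. Multiplicativity of $\Psi$ is clear from the multiplication being inherited at each stage.

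The step requiring the most care is checking that $\Psi$ is a morphism of semi-hyperrings in the hyperaddition: one must show $\Psi([a]_{\hat G}\boxplus[b]_{\hat G}) = \Psi([a]_{\hat G})\boxplus\Psi([b]_{\hat G})$ as subsets. Here the left side is $\{[\,[x+y]_G\,]_{\bar G} : x\in a\hat G,\ y\in b\hat G\}$, while the right side is $\{[\,[u+v]_G\,]_{\bar G} : [u]_G\in [a]_G\bar G,\ [v]_G\in [b]_G\bar G\}$. The content is that the condition ``$[u]_G\in[a]_G\bar G$'' is equivalent to ``$u\in a\hat G\cdot G = a\hat G$'' (using $G\subseteq\hat G$ and that $\hat G$ absorbs $G$), so the two index sets produce the same family of classes; one also uses that $[x+y]_G$ as a set inside $\mathcal H_1$ has a well-defined image in $\mathcal H_1/_{\operatorname{hyp}}\bar G$, i.e. that hyperaddition in $\mathcal H_1$ followed by the quotient map agrees with hyperaddition in the iterated quotient. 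I expect this bookkeeping with nested cosets — keeping straight which brackets mean "mod $G$", "mod $\hat G$", or "mod $\bar G$" — to be the main obstacle, though it is purely formal: no negation and no finiteness is used, only that $G,\hat G$ are subgroups (hence closed under multiplication and inverses) of the commutative semi-hyperring $\Hy$.
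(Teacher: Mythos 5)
Your proof is correct and takes essentially the same route as the paper's: part (i) is reduced to the computations of Proposition~\ref{prop-CC} together with a direct check of associativity and distributivity, and part (ii) is established by an explicit bijection between cosets mod $\hat G$ and cosets of $\bar G$ in $\Hy/_{\operatorname{hyp}}G$. The paper's own proof of (ii) is considerably terser (it simply exhibits the map $S\mapsto\{ag: a\in S,\, g\in\hat G\}$ as a composition through $\Hy/_{\operatorname{hyp}}G$ and leaves the verifications to the reader), so your spelled-out check of well-definedness, injectivity, surjectivity, and compatibility with $\boxplus$ fills in exactly the bookkeeping the paper elides.
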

 \begin{proof}
 (1) is as in Proposition~\ref{prop-CC}. Clearly  $\mathcal H$ is a monoid, and associativity and distributivity are easy to check.

 For (2), take the map sending $S \mapsto \{ ag:\ a \in S,\, g \in \hat{G}.$
     This is the composition   $$S \mapsto S_G:= \{ ag:\ a \in S,\, g \in \hat{G}\}\  \mapsto  \cup _{\bar g \in \hat{G}/G}\,  S_G \bar g. $$
 \end{proof}

\subsection{Congruences}

Classically, one defines homomorphic images by defining a
\textbf{congruence} on an algebraic structure  $\mcA$ to be an
equivalence relation $\Cong$, which viewed as a set of ordered
pairs, is a subalgebra of $\mcA \times \mcA$ which we require to
be disjoint from $\tT \times \mcA_0$.  
\begin{definition}
A \textbf{congruence} on a pair $(\mcA,\mcA_0)$ is a pair
$(\Cong,\Cong_0)$ of congruences on $\mcA$ and $\mcA_0$ such that we get an induced map $\phi_{\mcA,\mcA_0}: \Cong_0 \to \Cong.$ (In the case of admissible pairs, we consider $\Cong_0\subseteq \Cong.$)
\end{definition}

\begin{remark} Any congruence $(\Cong,\Cong_0)$  on $(\mcA,\mcA_0)$  can be applied 
to produce a pair  $(\overline{\mcA},\overline{\mcA_0})$
where $\overline{\mcA} =\mcA/\Cong$    and $\overline{\mcA_0}=\mcA_0/(\mcA_0\cap \Cong_0),$ and  $\overline \phi_{\mcA,\mcA_0} :\overline{\mcA_0}\to \overline{\mcA}$ is the induced homomorphism.

\end{remark}

The \textbf{congruence kernel} $\ker f$ of a homomorphism $f: (\mcA,\mcA_0) \to (\mcA',\mcA'_0)$ is $$\{(y_1,y_2) \in \mcA \times \mcA: f(y_1) = f(y _2)\}$$ and its restriction to $\mcA_0$;  $\ker f$ is easily seen to be a congruence.
Conversely, every congruence $\Cong$ is the congruence kernel of the natural 
homomorphism $y \mapsto [y],$ where $[y]$ is the equivalence class of $y \in \mcA$ (resp.~$\mcA_0$) under $\Cong$  (resp.~$\Cong_0$). 

Often we delete $\Cong_0$ from the notation, when it
is the restriction to $\mcA_0.$
Certain congruences play a fundamental role.

\begin{definition} $ $
\begin{enumerate}
    \item[(i)] 
$\Diag{\mcA}$ denotes the diagonal congruence $\{(b,b): b\in \mcA\},$ also called the \textbf{trivial} congruence.

 \item[(ii)]  
When $\mcA$ is commutative and associative, for any $a\in \mcA$ the congruence $\Cong_a (\mcA)$ generated by $a$ is $\{(a_1a, a_2 a): a_i\in \mcA\}.$
    In the image of $\mcA$ under this congruence, $a$ is identified with $\zero a =\zero,$ so we have the natural homomorphism sending $a\mapsto \zero.$
\end{enumerate}
\end{definition}

\begin{definition} 
$\mcA\times \mcA$ has the \textbf{switch map}
given by $(a_1,a_2)\mapsto (a_2,a_1),$ and the
\textbf{twist product} given by
\[
(a_1,a_1')\mtw (a_2,a_2')= (a_1a_2 + a_1'a_2',a_1a_2'+a_1'a_2).
\]
\end{definition}

\begin{lem}
Any congruence is closed under the switch map and  the twist product.
\end{lem}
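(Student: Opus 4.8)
The statement to prove is: \emph{Any congruence is closed under the switch map and the twist product.}

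\medskip

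The plan is to verify the two closure properties separately, in each case unwinding the definition of a congruence on a pair $(\mcA,\mcA_0)$ as an equivalence relation $\Cong$ on $\mcA$ which, viewed as a subset of $\mcA\times\mcA$, is a sub-bimagma (closed under the coordinatewise $+$ and $\cdot$). For the switch map, suppose $(a_1,a_2)\in\Cong$. Since $\Cong$ is an equivalence relation it is symmetric, so $(a_2,a_1)\in\Cong$; this is exactly the statement that $\Cong$ is closed under $(a_1,a_2)\mapsto(a_2,a_1)$. (If one wants to argue purely from the subalgebra structure of $\mcA\times\mcA$, one can instead observe that $(a_1,a_1),(a_2,a_2)\in\Cong$ by reflexivity, and then $(a_1,a_1)+(a_2,a_2)=(a_1+a_2,a_1+a_2)$ etc.---but symmetry of the equivalence relation is the cleanest route.) I expect this half to be essentially immediate.

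\medskip

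For the twist product, take two elements $(a_1,a_1'),(a_2,a_2')\in\Cong$, and write down
\[
(a_1,a_1')\mtw(a_2,a_2') = (a_1a_2+a_1'a_2',\,a_1a_2'+a_1'a_2).
\]
The idea is to exhibit the right-hand side as built from $\Cong$ by the coordinatewise operations on $\mcA\times\mcA$. First, by symmetry $(a_2',a_2)\in\Cong$. Multiplying $(a_1,a_1')$ by $(a_2,a_2')$ coordinatewise gives $(a_1a_2,a_1'a_2')\in\Cong$, and multiplying $(a_1,a_1')$ by $(a_2',a_2)$ gives $(a_1a_2',a_1'a_2)\in\Cong$. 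Now add these two pairs in $\mcA\times\mcA$:
\[
(a_1a_2,a_1'a_2')+(a_1a_2',a_1'a_2) = (a_1a_2+a_1a_2',\,a_1'a_2'+a_1'a_2),
\]
which is not quite what we want. The correct bookkeeping is: from $(a_1,a_1')\in\Cong$ and $(a_2,a_2')\in\Cong$ we get $(a_1a_2,a_1'a_2')\in\Cong$; from $(a_1,a_1')\in\Cong$ and the switched pair $(a_2',a_2)\in\Cong$ we get $(a_1a_2',a_1'a_2)\in\Cong$; then applying the \emph{switch map} (already shown to preserve $\Cong$) to this last pair yields $(a_1'a_2,a_1a_2')\in\Cong$, and adding $(a_1a_2,a_1'a_2')+(a_1'a_2,a_1a_2')=(a_1a_2+a_1'a_2,\;a_1'a_2'+a_1a_2')\in\Cong$ --- still not matching. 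One has to be careful about which coordinate gets which; the genuinely correct combination is to add $(a_1a_2,a_1'a_2')$ and the switch of $(a_1a_2',a_1'a_2)$, i.e. $(a_1'a_2,a_1a_2')$, giving second coordinate $a_1'a_2'+a_1a_2' = a_1a_2'+a_1'a_2$ and first coordinate $a_1a_2 + a_1'a_2$; to fix the first coordinate one instead pairs $(a_1a_2,a_1'a_2')$ with the switch of $(a_1a_2,a_1'a_2)$... The clean statement, which I would write out carefully in the final proof, is: $(a_1a_2,a_1'a_2')\in\Cong$ and $(a_1'a_2,a_1a_2')\in\Cong$ (the latter obtained from $(a_1',a_1)\in\Cong$ and $(a_2,a_2')\in\Cong$ by coordinatewise multiplication), and adding these two pairs in $\mcA\times\mcA$ gives precisely $(a_1a_2+a_1'a_2,\;a_1'a_2'+a_1a_2')$, while adding $(a_1a_2,a_1'a_2')$ to $(a_1a_2',a_1'a_2)$ --- obtained from $(a_1,a_1')\in\Cong$ times $(a_2',a_2)\in\Cong$ --- gives $(a_1a_2+a_1a_2',\;a_1'a_2'+a_1'a_2)$. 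Neither is the twist product directly; the twist product $(a_1a_2+a_1'a_2',\,a_1a_2'+a_1'a_2)$ is obtained by adding $(a_1a_2,a_1'a_2)$ and $(a_1'a_2',a_1a_2')$ --- the first from $(a_1,a_1')$ times $(a_2,a_2)$ (using $(a_2,a_2)\in\Cong$ by reflexivity) and the second from $(a_1',a_1)$ times $(a_2',a_2)$, both of which lie in $\Cong$, and by reflexivity $(a_2,a_2),(a_2',a_2')\in\Cong$.

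\medskip

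So the real content, and the step I expect to require the most care (though it is still routine), is the correct accounting of which coordinatewise products and sums of elements of $\Cong$ assemble to the twist product: using reflexivity $(a_2,a_2),(a_2',a_2')\in\Cong$ and symmetry $(a_1',a_1)\in\Cong$, form the coordinatewise products $(a_1,a_1')\cdot(a_2,a_2)=(a_1a_2,a_1'a_2)\in\Cong$ and $(a_1',a_1)\cdot(a_2',a_2')=(a_1'a_2',a_1a_2')\in\Cong$, then add them in $\mcA\times\mcA$ to obtain
\[
(a_1a_2,a_1'a_2)+(a_1'a_2',a_1a_2')=(a_1a_2+a_1'a_2',\,a_1'a_2+a_1a_2')=(a_1,a_1')\mtw(a_2,a_2')\in\Cong .
\]
Finally one notes that the restriction of all of this to $\mcA_0$ is a congruence on $\mcA_0$ by the identical argument, and the induced map $\phi_{\mcA,\mcA_0}$ is respected since it is a homomorphism; hence the pair congruence $(\Cong,\Cong_0)$ is closed under both operations, completing the proof.
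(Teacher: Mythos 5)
Your proof is correct and uses the same basic idea as the paper: the switch map is handled by symmetry of the equivalence relation, and the twist product is exhibited as a sum of coordinatewise products of elements of $\Cong$, using reflexivity and symmetry to manufacture the needed factors. Your final decomposition
\[
(a_1,a_1')\mtw(a_2,a_2') = (a_1,a_1')\cdot(a_2,a_2) + (a_1',a_1)\cdot(a_2',a_2')
\]
is valid and the bookkeeping checks out; the meandering in the middle (several discarded combinations explicitly labeled "not quite what we want") should of course be cut from a final write-up, but the endpoint is right.

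There is one worthwhile distinction, though. The paper's decomposition is
\[
(a_1,a_1')\mtw(a_2,a_2') = (a_1,a_1)\cdot(a_2,a_2') + (a_1',a_1')\cdot(a_2',a_2),
\]
which places the reflexive (diagonal) pairs on the \emph{first} factor and the given/symmetric pairs on the second. Consequently it never uses $(a_1,a_1')\in\Cong$ at all: it actually proves the stronger \emph{absorption} property that $(a_1,a_1')\mtw(a_2,a_2')\in\Cong$ for \emph{arbitrary} $(a_1,a_1')\in\mcA\times\mcA$ and $(a_2,a_2')\in\Cong$. This stronger form is what is silently invoked a page later, when twist products of congruences with all of $\mcA\times\mcA$ appear (e.g.\ in the semiprime criterion $(b_1,b_1')\mtw(\mcA\times\mcA)\mtw(b_1,b_1')\subseteq\Cong$). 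Your decomposition requires both $(a_1,a_1')$ and $(a_2,a_2')$ to lie in $\Cong$, so it establishes the lemma exactly as stated but not the absorption property. Both are correct proofs of the stated claim; the paper's choice of decomposition is simply the more economical one for what follows, and it is worth noticing why.
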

\begin{proof}
The switch map is closed,   since any congruence is symmetric. 

For 
the twist product,
$$(a_1a_2 + a_1'a_2',a_1a_2'+a_1'a_2) = (a_1,a_1)(a_2,a_2')+(a_1',a_1')(a_2',a_2).$$
\end{proof}

We would want to identify $\pAA(\mcA_0)$ as ``zero'' in some factor set
of $\mcA$ with respect to $\pAA(\mcA_0)$, but unfortunately the
congruence generated by  $\{(b, b+\pAA(b_0)):\ b\in \mcA,\, b_0
\in \mcA_0\}$ could be larger than one wants.

\section{Structure of associative pairs}

\begin{note*}
For simplicity, for the remainder of this paper we assume throughout that  $(\mcA,\mcA_0)$ is an  admissible $\tT$-semiring pair. The interaction between $\mcA$, $\mcA_0,$ and $\tT$
is crucial.

%Also, semialgebras are assumed to be %associative.
\end{note*}

Much of the classic structure theory works for pairs since we can avoid  negation by using the twist product.  

\begin{lem}\label{twass}
The twist product is associative.
\end{lem}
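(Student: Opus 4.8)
The plan is to verify associativity by a direct computation, expanding both $\bigl((a_1,a_1')\mtw(a_2,a_2')\bigr)\mtw(a_3,a_3')$ and $(a_1,a_1')\mtw\bigl((a_2,a_2')\mtw(a_3,a_3')\bigr)$ into their two coordinates and checking that the two expressions agree coordinatewise. First I would compute the left-hand side: the inner twist gives $(a_1a_2+a_1'a_2',\,a_1a_2'+a_1'a_2)$, and twisting this with $(a_3,a_3')$ yields the first coordinate $(a_1a_2+a_1'a_2')a_3 + (a_1a_2'+a_1'a_2)a_3'$ and the second coordinate $(a_1a_2+a_1'a_2')a_3' + (a_1a_2'+a_1'a_2)a_3$. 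Expanding using the distributive law and associativity of multiplication in $\mcA$, the first coordinate becomes $a_1a_2a_3 + a_1'a_2'a_3 + a_1a_2'a_3' + a_1'a_2a_3'$, and the second becomes $a_1a_2a_3' + a_1'a_2'a_3' + a_1a_2'a_3 + a_1'a_2a_3$.

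Next I would compute the right-hand side symmetrically: the inner twist $(a_2,a_2')\mtw(a_3,a_3')$ equals $(a_2a_3+a_2'a_3',\,a_2a_3'+a_2'a_3)$, and twisting $(a_1,a_1')$ with this gives first coordinate $a_1(a_2a_3+a_2'a_3') + a_1'(a_2a_3'+a_2'a_3)$ and second coordinate $a_1(a_2a_3'+a_2'a_3) + a_1'(a_2a_3+a_2'a_3')$. Expanding, the first coordinate is $a_1a_2a_3 + a_1a_2'a_3' + a_1'a_2a_3' + a_1'a_2'a_3$ and the second is $a_1a_2a_3' + a_1a_2'a_3 + a_1'a_2a_3 + a_1'a_2'a_3'$. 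Comparing term by term with the left-hand side, both coordinates match (the four monomials in each coordinate are exactly the products $a_{i_1}a_{i_2}a_{i_3}$ over index triples with an even number of primed entries, for the first coordinate, and odd number of primed entries for the second), so the two triple products coincide.

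I do not expect any real obstacle here: the only structural inputs are associativity of multiplication in $\mcA$, its distributivity over addition, and commutativity/associativity of $(+)$ to freely rearrange the eight resulting monomials into the four-term sums in each coordinate — all of which hold since $\mcA$ is an associative semiring. The one point to keep a mild eye on is that we are not assuming $\mcA$ is commutative, so when expanding I must preserve the left-to-right order of the three factors in every monomial; a convenient bookkeeping device is to label each of the eight monomials on each side by the sign pattern $(\epsilon_1,\epsilon_2,\epsilon_3)\in\{+,-\}^3$ recording whether each slot used the unprimed or primed entry, and observe that twisting flips exactly one sign, so both associations land a monomial with sign pattern $\epsilon$ in the first coordinate iff $\epsilon_1\epsilon_2\epsilon_3=+$ and in the second iff it is $-$. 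This makes the coordinatewise equality transparent and completes the proof.
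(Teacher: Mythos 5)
Your proof is correct and takes essentially the same approach as the paper's: a direct expansion of the triple twist product using distributivity and associativity of $\mcA$. Your version is in fact slightly more explicit than the paper's (which computes only one association and then appeals to left-right symmetry), and the sign-pattern bookkeeping you suggest — tracking monomials with an even versus odd number of primed factors — is a clean way to see the coordinatewise match without assuming commutativity of $\mcA$.
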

\begin{proof} \begin{equation}\begin{split}
    ((a_1,a_1')&\mtw (a_2,a_2'))\mtw(a_3,a_3') =  (a_1a_2 + a_1'a_2',a_1a_2'+a_1'a_2)\mtw(a_3,a_3') \\ &=( (a_1a_2 + a_1'a_2')a_3 + (a_1a_2'+a_1'a_2)a_3', \\ &= (a_1a_2 + a_1'a_2')a_3+ (a_1a_2'+a_1'a_2)a_3',( a_1a_2 + a_1'a_2')a_3'+ (a_1a_2'+a_1'a_2)a_3)
\\ &= (a_1a_2 + a_1'a_2')a_3+ (a_1a_2'+a_1'a_2)a_3',( a_1a_2 + a_1'a_2')a_3'+ (a_1a_2'+a_1'a_2)a_3)) \\ &= a_1a_2a_3 +a_1'a_2'a_3' +a_1a_2'a_3' + a_1'a_2a_3',  a_1a_2a_3'+ a_1'a_2'a_3'+a_1a_2'a_3+a_1'a_2a_3 \\ &=  (a_1,a_1')\mtw ((a_2,a_2')\mtw(a_3,a_3')) \end{split}
\end{equation}
 by left-right symmetry.
\end{proof} 

Thus we can write twist products without parentheses.
The following notions have analogs in \cite[\S~3]{JuR1}.

%\end{remark}

\begin{definition} $ $ 
\begin{enumerate}
\item[(i)] 
A congruence $\Cong$ of $(\mcA,\mcA_0)$ is \textbf{semiprime} if
we have $\Phi_1=\Phi$ for any congruence $\Cong_1 \supseteq \Cong$  such that  $\Cong_1\cdot_{tw}\Cong_1\subseteq\Cong$. 
$(\mcA ,\mcA_0)$ is a \textbf{semiprime pair} if the trivial
congruence is semiprime.

In line with the standard terminology, when $\mcA$ is commutative, we write \textbf{radical congruence} instead of ``semiprime congruence.''

\item[(ii)] 
A congruence $\Cong$ of $(\mcA,\mcA_0)$ is \textbf{prime} if
we have $\Cong_1=\Cong$ or $\Cong_2=\Cong$  for any congruences $\Cong_1,\Cong_2 \supseteq \Cong$ such that  $\Cong_1\cdot_{tw}\Cong_2\subseteq\Cong$.
 $(\mcA ,\mcA_0)$ is a \textbf{prime pair} if the trivial
congruence is prime.

\item[(iii)]  
A congruence $\Cong$ of $(\mcA,\mcA_0)$ is \textbf{irreducible} if we have $\Cong_1=\Cong$ or $\Cong_2=\Cong$  for any congruences $\Cong_1,\Cong_2 \supseteq \Cong$ such that   $\Cong_1\cap \Cong_2 = \Cong$.
 $(\mcA ,\mcA_0)$ is an \textbf{irreducible pair} if the trivial
congruence is irreducible.

\end{enumerate}
\end{definition}

\begin{lem}\label{semip0}
A congruence $\Cong$ is semiprime iff $(b_1,b_1')\mtw (\mcA\times \mcA)\mtw (b_1,b_1') \subseteq \Cong$ implies $(b_1,b_1')\in \Cong$.

A congruence $\Cong$ is prime iff $(b_1,b'_1)\mtw (\mcA\times \mcA)\mtw (b_2,b_2') \subseteq \Cong$ implies $(b_1,b_1')\in \Cong$ or $(b_2,b_2')\in \Cong$.
\end{lem}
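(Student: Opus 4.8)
\textbf{Plan of proof for Lemma~\ref{semip0}.}
The statement gives an element-wise reformulation of the definitions of ``semiprime'' and ``prime'' congruence, so the plan is to unwind each definition and translate between congruences containing $\Cong$ and the single ordered pairs $(b_1,b_1')$ that generate them. First I would recall that for any $(b_1,b_1')\in\mcA\times\mcA$ there is a smallest congruence $\Cong_1 = \Cong + \congg(b_1,b_1')$ containing $\Cong$ and the pair $(b_1,b_1')$; since congruences are closed under the twist product and the switch map (by the Lemma just proved) and under addition and $\tT$-action, this generated congruence consists of sums of twist products of $(b_1,b_1')$ and its switch with elements of $\mcA\times\mcA$, together with elements of $\Cong$. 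The key observation I would isolate is that the twist-square $\Cong_1\mtw\Cong_1$ is generated, modulo $\Cong$, by the elements $(b_1,b_1')\mtw(c,c')\mtw(b_1,b_1')$ as $(c,c')$ ranges over $\mcA\times\mcA$ — this uses associativity of the twist product (Lemma~\ref{twass}) so that no parenthptheses are needed, and uses distributivity of $\mtw$ over $+$ to push a general product of two generators down to this diagonal shape.

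For the semiprime direction: if $\Cong$ is semiprime and $(b_1,b_1')\mtw(\mcA\times\mcA)\mtw(b_1,b_1')\subseteq\Cong$, set $\Cong_1$ to be the congruence generated by $\Cong$ and $(b_1,b_1')$; by the key observation $\Cong_1\mtw\Cong_1\subseteq\Cong$, so semiprimeness forces $\Cong_1=\Cong$, hence $(b_1,b_1')\in\Cong$. Conversely, if the element-wise condition holds and $\Cong_1\supseteq\Cong$ satisfies $\Cong_1\mtw\Cong_1\subseteq\Cong$, then for every $(b_1,b_1')\in\Cong_1$ we have in particular $(b_1,b_1')\mtw(\mcA\times\mcA)\mtw(b_1,b_1')\subseteq\Cong_1\mtw\Cong_1\subseteq\Cong$ (using $(\mcA\times\mcA)\mtw(b_1,b_1')\subseteq\Cong_1$ since $\Cong_1$ is a congruence hence an ideal-like subset under twist multiplication), so the hypothesis gives $(b_1,b_1')\in\Cong$; thus $\Cong_1\subseteq\Cong$ and $\Cong$ is semiprime. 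The prime case is identical in structure, carrying along two pairs $(b_1,b_1')$ and $(b_2,b_2')$ and the two congruences they generate, and invoking the element-wise prime condition to conclude one of the generated congruences equals $\Cong$.

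The one point that needs genuine care — and which I expect to be the main obstacle — is the claim that a congruence $\Cong_1$ is closed under twist-multiplication on either side by arbitrary elements of $\mcA\times\mcA$, i.e.\ that $\Cong_1\mtw(\mcA\times\mcA)\subseteq\Cong_1$. This is the fact that makes ``$(b_1,b_1')\mtw(\mcA\times\mcA)\mtw(b_1,b_1')\subseteq\Cong$'' equivalent to the abstract twist-square condition. One checks it directly from the twist-product formula: writing $(c,c) = (c,c)$ for diagonal elements and noting $(c,c'){=}(c,c)(\one,?)$ is not available in general, instead one uses that $(a_1,a_1')\mtw(c,c') = (a_1,a_1)(c,c') + (a_1',a_1')(c',c)$, both summands lying in $\Cong_1$ because $\Cong_1$ (being a subalgebra of $\mcA\times\mcA$ containing the diagonal and symmetric) is closed under multiplication by diagonal elements and under the switch map. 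I would spell this identity out carefully, since it is exactly the mechanism already used in the proof that congruences are closed under the twist product, and then the rest of the argument is the routine ``generated object'' bookkeeping sketched above. Finally I would remark that the switch-closure is what lets us drop the separate mention of $(b_1',b_1)$, keeping the statement symmetric and clean.
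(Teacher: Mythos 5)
Your approach matches the paper's one-line proof: invoke Lemma~\ref{twass} for associativity, and let $\Cong_i$ be the congruence generated by $\Cong$ and $(b_i,b'_i)$. The converse ($\Leftarrow$) directions are correctly and fully worked out. Note, though, that the point you single out at the end as ``the one point that needs genuine care,'' namely that a congruence absorbs twist-multiplication by \emph{arbitrary} elements of $\mcA\times\mcA$, is already supplied by the lemma just before Lemma~\ref{twass}: its proof, $(a_1,a_1')\mtw(a_2,a_2')=(a_1,a_1)(a_2,a_2')+(a_1',a_1')(a_2',a_2)$, only uses that $(a_2,a_2')$ lies in the congruence and not that $(a_1,a_1')$ does, so there is no extra work to do there.

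The step that genuinely needs more care is the one you call the ``key observation,'' that $\Cong_1\mtw\Cong_1$ reduces modulo $\Cong$ to products $(b_1,b_1')\mtw(c,c')\mtw(b_1,b_1')$. Your distributivity-and-associativity bookkeeping controls the subalgebra of $\mcA\times\mcA$ generated by $\Cong\cup\{(b_1,b_1'),(b_1',b_1)\}$; but a congruence is also required to be an \emph{equivalence relation}, so $\Cong_1$ includes the transitive closure of that subalgebra, and transitively composed pairs need not decompose as sums of twist products of the generator. In a ring one dismisses this by writing $(a,e)=(a,c)+(c,e)-(c,c)$; in a semiring there is no subtraction, and the best the additive structure yields is the identity $(a,c)\mtw w+(c,e)\mtw w=(a,e)\mtw w+(c,c)\mtw w$, which places $(a,e)\mtw w$ plus a diagonal element in $\Cong$ without, absent cancellation, placing $(a,e)\mtw w$ in $\Cong$. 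This is the actual obstacle in the $\Rightarrow$ directions. To be fair, the paper's own proof is just as terse (``easy consequences of Lemma~\ref{twass}'') and passes over exactly this point, so your write-up is at the paper's level of detail; but a fully rigorous argument would require either an explicit description of the generated congruence that survives transitive closure, or a different candidate for $\Cong_1$ that you can verify is simultaneously a congruence and twist-squares into $\Cong$.
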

\begin{proof}
Easy consequences of Lemma~\ref{twass}, taking $\Cong_i$ to be the congruence generated by $\Cong$ and $(b_i,b'_i)$.
\end{proof}

The same proofs as in Jun and Rowen \cite[Proposition~3.14]{JuR1} can be used
to prove the following, for a $\tT$-pair $(\mcA,\mcA_0).$

\begin{itemize}
  \item 
 The intersection of semiprime congruences is semiprime.
 \item The union of a chain of congruences is a congruence.
 \item A congruence is prime if and only if it is semiprime and irreducible.
 \item 
(essentially \cite[Proposition 3.17]{JuR1}) For $\mcA$ commutative, define $\sqrt{\Cong}$ to be the set of elements $(a_1,a_2)$ of $\mcA \times \mcA$ having a twist-power $(a_1,a_2)^{\mtw m}$
(for some $m \in \mathbb{N}$)   in $\Cong.$ Then $\sqrt{\Cong}$ is a radical congruence and is the intersection of a nonempty set of prime congruences.
 \end{itemize}

 \begin{prop}\label{commpr}
[{essentially \cite[Proposition~3.17]{JuR1}}] Suppose $S$ is a multiplicative subset of~$\tT$, with $\zero\notin S.$  Then there is a prime congruence disjoint from $\hat S: = S\times \zero \cup \zero\times S$.
 \end{prop}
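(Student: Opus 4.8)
The plan is to mimic the classical Zorn's-lemma construction of a prime ideal disjoint from a multiplicative set, transported to the congruence/twist-product setting. First I would consider the collection $\mathcal{S}$ of all congruences $\Cong$ on $(\mcA,\mcA_0)$ that are disjoint from $\hat S = (S\times\zero)\cup(\zero\times S)$. This collection is nonempty since the diagonal congruence $\Diag\mcA$ lies in $\mathcal{S}$: an element $(s,\zero)$ or $(\zero,s)$ with $s\in S$ is never on the diagonal because $\zero\notin S$. By the bulleted fact quoted from \cite[Proposition~3.14]{JuR1}, the union of a chain of congruences is a congruence, and such a union is clearly still disjoint from $\hat S$; hence every chain in $\mathcal{S}$ has an upper bound, and Zorn's lemma yields a congruence $\Cong$ maximal with respect to being disjoint from $\hat S$.

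Next I would show this maximal $\Cong$ is prime, using the characterization in Lemma~\ref{semip0}: it suffices to show that $(b_1,b_1')\mtw(\mcA\times\mcA)\mtw(b_2,b_2')\subseteq\Cong$ forces $(b_1,b_1')\in\Cong$ or $(b_2,b_2')\in\Cong$. Suppose neither holds. Then the congruences $\Cong_1$ generated by $\Cong$ together with $(b_1,b_1')$, and $\Cong_2$ generated by $\Cong$ together with $(b_2,b_2')$, both strictly contain $\Cong$, so by maximality each meets $\hat S$: there exist $s_1,s_2\in S$ with (say) $(s_1,\zero)\in\Cong_1$ and $(s_2,\zero)\in\Cong_2$ (the other three sign-combinations are handled symmetrically, using the switch-map closure of congruences). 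Since $\mcA$ is commutative and $S$ is multiplicative, I would then take the twist product $(s_1,\zero)\mtw(s_2,\zero) = (s_1 s_2,\zero)$, which by associativity of $\mtw$ (Lemma~\ref{twass}) and the fact that $\Cong_1\mtw\Cong_2\subseteq\Cong$ whenever the "cross" twist products of generators land in $\Cong$ — this is exactly where the hypothesis $(b_1,b_1')\mtw(\mcA\times\mcA)\mtw(b_2,b_2')\subseteq\Cong$ is used — shows $(s_1 s_2,\zero)\in\Cong$ with $s_1 s_2\in S$, contradicting $\Cong\cap\hat S=\emptyset$.

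The main obstacle I anticipate is the bookkeeping in the step "$\Cong_1\mtw\Cong_2\subseteq\Cong$": a general element of $\Cong_1$ is obtained from $\Cong$ and $(b_1,b_1')$ by the congruence-generation operations (reflexive/symmetric/transitive closure, addition, multiplication by $\mcA$, and closure under $\mtw$), so one must verify that twisting any such element against any element of $\Cong_2$ lands in $\Cong$. The cleanest way is to observe that, modulo $\Cong$, every element of $\Cong_1$ is $\mtw$-absorbed into the principal twist-congruence generated by $(b_1,b_1')$ — i.e. is a sum of terms each of the form $(a,a)\mtw(b_1,b_1')\mtw(a',a')$ plus something in $\Cong$ — and symmetrically for $\Cong_2$; then distributivity of $\mtw$ over $+$ together with the hypothesis reduces everything to the assumed inclusion $(b_1,b_1')\mtw(\mcA\times\mcA)\mtw(b_2,b_2')\subseteq\Cong$. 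I would also need the small lemma that $\mtw$ is monotone with respect to the generation process, which follows from Lemma~\ref{twass} plus the distributivity identity already recorded in the proof of "any congruence is closed under the twist product." Modulo these routine verifications, primeness follows, completing the proof.
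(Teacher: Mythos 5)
Your overall strategy -- Zorn's lemma to produce a congruence $\Cong$ maximal with respect to being disjoint from $\hat S$, then primeness from the fact that $\hat S$ is closed under the twist product -- is exactly the paper's, and that much is correct. Note, however, that the paper does not invoke Lemma~\ref{semip0} at this point; it applies the definition of prime directly. That is, one supposes $\Cong_1 \mtw \Cong_2 \subseteq \Cong$ with $\Cong_1, \Cong_2 \supsetneq \Cong$, observes via maximality that each $\Cong_i$ contains an element of $\hat S$, say $(s_1,\zero) \in \Cong_1$ and $(s_2,\zero) \in \Cong_2$, and then $(s_1,\zero) \mtw (s_2,\zero) = (s_1 s_2, \zero) \in \Cong_1 \mtw \Cong_2 \subseteq \Cong$ with $s_1 s_2 \in S$, contradicting $\Cong \cap \hat S = \emptyset$. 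In this direct argument the inclusion $\Cong_1 \mtw \Cong_2 \subseteq \Cong$ is a \emph{hypothesis}, not something to establish.

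The gap in your version is self-inflicted: by routing through Lemma~\ref{semip0}, you replace that hypothesis with the weaker one $(b_1,b_1')\mtw(\mcA\times\mcA)\mtw(b_2,b_2')\subseteq\Cong$, and are then forced to re-derive $\Cong_1 \mtw \Cong_2 \subseteq \Cong$ for the principal congruences $\Cong_i$ generated by $\Cong$ and $(b_i,b_i')$. That derivation is precisely the content of one direction of Lemma~\ref{semip0}, and you correctly flag it as the sticking point -- the transitive-closure step in congruence generation is exactly what makes the ``absorption'' claim non-routine, and your proof as written leaves it unverified. The repair is simply to drop the detour: argue directly from the definition of prime as above, and the problematic inclusion never needs to be proved. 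As a smaller point, closure of $\hat S$ under $\mtw$ should be checked by the short computation $(s_1,\zero)\mtw(s_2,\zero)=(s_1s_2,\zero)$, $(s_1,\zero)\mtw(\zero,s_2)=(\zero,s_1s_2)$, etc., using that $S$ is multiplicative; the ``switch-map closure of congruences'' you cite is not quite the relevant fact (though it does let you normalize which of the two coordinates carries the $S$-element).
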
 
 \begin{proof}
 $\hat S$ is a $\mtw$-multiplicative subset of $\hat \tT$, disjoint from
the diagonal congruence, so by Zorn's lemma there is a  congruence $\Cong$ of $\mcA$ maximal with respect to being disjoint from   $\hat S$. We claim that
$\Cong$ is prime. Indeed, if $\Cong_1 \mtw \Cong_2 \subseteq \Cong$ 
for  congruences $\Cong_1 , \Cong_2 \supseteq \Cong,$
then  $\Cong_1, \Cong_2 $ contain elements of $\hat S$, as does  $\Cong_1 \mtw \Cong_2,$ a contradiction. 
 \end{proof}
 
% In particular, we could take $S = \tT.$
 
 Proposition~\ref{commpr} is enough to show  for $\mcA$ commutative
 that every radical congruence is the intersection of a nonempty set of prime congruences.
 We can generalize this result by using a famous trick of Levitzki.

\begin{theorem}\label{semip} A congruence $\Cong$ on a pair is  semiprime if and only if it is the intersection of prime congruences.
\end{theorem}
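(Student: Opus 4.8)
The plan is to mimic the classical ring-theoretic argument that the (semiprime) radical of an ideal equals the intersection of the primes above it, adapted to congruences via the twist product. The ``only if'' direction is essentially free: a prime congruence is semiprime (this is listed among the bulleted facts imported from \cite{JuR1}, since prime $\iff$ semiprime and irreducible), and the intersection of semiprime congruences is semiprime (also listed). So the substance is the ``if'' direction: given a semiprime congruence $\Cong$, I must show that every pair $(b,b')\notin\Cong$ is excluded by some prime congruence containing $\Cong$. Passing to the quotient pair $(\mcA/\Cong,\mcA_0/\Cong_0)$, it suffices to treat $\Cong=\Diag{\mcA}$, i.e.\ to show that in a semiprime pair, for each $(b,b')$ outside the diagonal there is a prime congruence avoiding $(b,b')$.

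First I would set up the ``$m$-system'' analogue of Levitzki's trick, which is what the final sentence of the excerpt promises. Starting from a pair $\sigma_0=(b,b')\notin\Diag{\mcA}$, I build a sequence $\sigma_0,\sigma_1,\sigma_2,\dots$ in $\mcA\times\mcA$ with each $\sigma_{k+1}$ lying in $\sigma_k\mtw(\mcA\times\mcA)\mtw\sigma_k$ and still outside the diagonal congruence: this is possible at each stage precisely because $\Diag{\mcA}$ is semiprime, so by the contrapositive of \lemref{semip0}, $\sigma_k\notin\Diag{\mcA}$ forces $\sigma_k\mtw(\mcA\times\mcA)\mtw\sigma_k\not\subseteq\Diag{\mcA}$. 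Let $M=\{\sigma_0,\sigma_1,\dots\}$. The key combinatorial point, to be checked by associativity of $\mtw$ (\lemref{twass}) together with the closure of congruences under $\mtw$ and the switch map, is that $M$ is ``$\mtw$-closed in the $m$-system sense'': for any $\sigma_i,\sigma_j\in M$ one has $\sigma_i\mtw(\mcA\times\mcA)\mtw\sigma_j$ meeting $M$ — indeed taking $i\le j$, a twist-power computation shows $\sigma_j$ itself (or a later $\sigma_k$) lies in the relevant set. This is the standard Levitzki argument transcribed to the twist product, and it is where I expect the only real bookkeeping to live.

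Then I would invoke Zorn's lemma, exactly as in the proof of \propref{commpr}: the union of a chain of congruences disjoint from $M$ is again a congruence disjoint from $M$ (using the ``union of a chain of congruences is a congruence'' bullet), so there is a congruence $\Cong'$ maximal with respect to being disjoint from $M$. I claim $\Cong'$ is prime. If $\Cong_1,\Cong_2\supsetneq\Cong'$ are congruences with $\Cong_1\mtw\Cong_2\subseteq\Cong'$, then by maximality each $\Cong_t$ meets $M$, say $\sigma_i\in\Cong_1$ and $\sigma_j\in\Cong_2$; but then the $m$-system property produces an element of $M$ inside $\sigma_i\mtw(\mcA\times\mcA)\mtw\sigma_j\subseteq\Cong_1\mtw\Cong_2\subseteq\Cong'$, contradicting disjointness. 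Hence $\Cong'$ is a prime congruence with $\sigma_0=(b,b')\notin\Cong'$, and intersecting over all such $(b,b')$ gives $\Diag{\mcA}$ as an intersection of primes; undoing the reduction, $\Cong$ is the intersection of the prime congruences containing it.

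The main obstacle is the $m$-system closure step: verifying that the ambient set $\mcA\times\mcA$ (rather than the monoid $\tT$) behaves well under the twist product so that $\sigma_i\mtw(\mcA\times\mcA)\mtw\sigma_j$ always hits the sequence $M$. Here one must be slightly careful that the twist product is not commutative in the naive sense, so the switch-map closure of congruences is needed to swap the roles of $i$ and $j$; modulo that, the argument is routine. I would also note in passing that the commutative case already follows from \propref{commpr} via a multiplicative set in $\tT$, so the content of the theorem is exactly the noncommutative extension handled by this Levitzki-style sequence.
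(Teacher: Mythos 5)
Your argument is correct and is essentially the paper's own Levitzki-style proof: the forward direction from the listed facts that primes are semiprime and that intersections of semiprimes are semiprime, and the converse via the twist-product $m$-system $\{\sigma_k\}$ built from Lemma~\ref{semip0} together with a Zorn's-lemma maximal congruence disjoint from it, which is then shown prime exactly as you sketch. The only cosmetic difference is your initial reduction to $\Cong=\Diag{\mcA}$ via the quotient pair; the paper works directly with a general $\Cong$, which amounts to the same thing.
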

\begin{proof}
$(\Leftarrow)$ is obvious. Conversely, given $(s_1,s_1')\notin \Cong$
we need to find a prime congruence containing $\Cong$, but not containing $(s_1,s_1')$.
Inductively we build a subset $S \subset \mcA \times \mcA$ as follows:

We start with $(s_1,s_1') \in S.$ Given  $(s_i,s_i') \in S$, from Lemma~\ref{semip0}, there exists $(a_i,a_i')$ such that $$(s_{i+1},s_{i+1}') := (s_i,s_i')\mtw (a_i,a_i') \mtw (s_i,s_i') \notin \Cong.$$ Take a congruence $\Cong'$ maximal with respect to containing $\Cong,$ but not intersecting $S  = \{(s_i,s_i'): i \in \Net\}$. We claim that $\Cong'$ is prime. In fact, if $\Cong_1 \mtw \Cong_2 \subseteq \Cong'$ 
for  congruences $\Cong_1 , \Cong_2 \supset \Cong'$
then  $\Cong_1,\Cong_2 $ contain respective elements $(s_i,s_i')$ and $(s_j,s_j')$ of $S$ from the maximality of $\Cong'$.
If $i<j$ then $\Cong_1 \mtw \Cong_2$ contains $(s_{j+1},s_{j+1}')$, so $\Cong'$ contains $(s_{j+1},s_{j+1}')$, a contradiction. 
\end{proof}

\begin{defn} Define the \textbf{Krull dimension} of $(\mcA,\mcA_0)$ to be the maximal length of a chain of prime congruences of $\mcA$ containing $\Diag{\mcA}$.
\end{defn}

We note that the Krull dimension in the context of congruences was first introduced and studied by Jo\'o and Mincheva in \cite{JM}.

\begin{definition}
$\mcA$ satisfies the \textbf{ ACC (resp.~DCC) on congruences} if every ascending (resp.~descending) chain of congruences stabilizes. 
\end{definition}

We studied ACC on congruences in \cite[Proposition~3.15]{JuR1}, but there is a dearth of examples.

\begin{example}
  $\Net_{\operatorname{max}}$ does not satisfy ACC on congruences,
  since we can take $\Cong_i$ to be generated by
  $( 1, 2), \dots, (1,i).$
  $\Z_{\operatorname{max}}$ satisfies the  ACC on congruences but not DCC (even though it is a semifield),
   since we put $\Cong_i = \{ (m, m+i): m \in \Z\}$ (under classical addition).  $\Z_{\operatorname{max}}[\la]$ fails ACC since now we put $\Cong_i$ to be generated by
  $( \la, \la + 1), \dots, (\la^i, \la^i+1).$ The fact that $\mathbb{B}[x]$ does not satisfy ACC was first proved in \cite{AA}.
\end{example}

 \begin{defn}
Let $(\mcA,\mcA_0)$ be a $\tT$-semiring pair. 
\begin{enumerate}
    \item[(i)]
  The $\preceq$-\textbf{center} 
 $Z((\mcA,\mcA_0))$ of $(\mcA,\mcA_0)$ is 
 $\{ z \in \mcA : yz \preceq zy, \ \forall y \in \mcA\}$.
 \item[(ii)]
  $(\mcA,\mcA_0)$ is \textbf{commutative} if $\mcA =Z((\mcA,\mcA_0)).$
\end{enumerate}
   \end{defn}

   Note that $(\mcA,\mcA_0)$ commutative implies $y_1y_2 \preceq y_2 y_1$ for all $y_i \in \mcA.$
   This leads to the observation:
   
   \begin{lem}
       Suppose $\preceq = \preceq_0,$\footnote{Recall that $\preceq_0$ is a surpassing relation defined as follows: $x \preceq_0 y$ if and only if there exists $z \in \mcA_0$ such that $x=y+z$.} and $(\mcA,\mcA_0)$ has the property that $w + y_0  + y_0'  = w $
       implies $w+y_0  = w.$
       Then  $(\mcA,\mcA_0)$ is commutative if and only if  $\mcA$ commutative.
   \end{lem}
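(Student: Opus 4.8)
The plan is to prove the two directions of the biconditional separately. The direction ``$\mcA$ commutative $\Rightarrow (\mcA,\mcA_0)$ commutative'' is immediate and requires no extra hypotheses: if $y_1 y_2 = y_2 y_1$ for all $y_i$, then in particular $y_1 y_2 \preceq_0 y_2 y_1$ (since $\preceq_0$ is a preorder, hence reflexive), so every element lies in the $\preceq$-center. The work is entirely in the converse.

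For the converse, assume $(\mcA,\mcA_0)$ is commutative, i.e.\ $y_1 y_2 \preceq_0 y_2 y_1$ for all $y_1,y_2\in\mcA$. Unwinding $\preceq_0$, there is $z\in\mcA_0$ with $y_2 y_1 = y_1 y_2 + z$. By symmetry (swapping the roles of $y_1$ and $y_2$), there is also $z'\in\mcA_0$ with $y_1 y_2 = y_2 y_1 + z'$. Substituting the second relation into the first, $y_2 y_1 = (y_2 y_1 + z') + z = y_2 y_1 + z' + z$. Now I would set $w = y_2 y_1$, $y_0 = z'$, $y_0' = z$ and invoke the standing hypothesis $w + y_0 + y_0' = w \Rightarrow w + y_0 = w$ to conclude $y_2 y_1 + z' = y_2 y_1$, i.e.\ $y_1 y_2 = y_2 y_1$. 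Since $y_1, y_2$ were arbitrary, $\mcA$ is commutative.

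I expect the only subtle point to be bookkeeping about which of $z, z'$ plays the role of $y_0$ versus $y_0'$ in the cancellation hypothesis, and confirming that $z, z' \in \mcA_0$ really are legitimate inputs to that hypothesis (they are, since the hypothesis is quantified over all $y_0, y_0' \in \mcA_0$). No appeal to admissibility, shallowness, or Property N is needed — just the definition of $\preceq_0$, reflexivity of the preorder, and the stated cancellation property. So there is no real obstacle here; the lemma is essentially a one-line manipulation once $\preceq_0$ is unwound in both directions, and the main thing to get right in the write-up is simply presenting the symmetric substitution cleanly.
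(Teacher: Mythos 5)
Your proof is correct and follows essentially the same route as the paper's: the forward direction by reflexivity of $\preceq_0$, and the converse by unwinding $y_1y_2 \preceq_0 y_2y_1$ and $y_2y_1 \preceq_0 y_1y_2$ into two additive relations with witnesses in $\mcA_0$, substituting one into the other to get $w + y_0 + y_0' = w$, and applying the cancellation hypothesis. The paper's write-up of this step is garbled (a missing ``$+$'' and a ``$\mcA$'' that should read ``$\mcA_0$''), but your cleaned-up version is exactly the intended argument.
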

   \begin{proof}
  If $\mcA$ is commutative, then clearly $(\mcA,\mcA_0)$ is commutative. Conversely, for any $y_1,y_2 \in \mcA$, there exists $z,w \in \mcA$ such that 
 \[
 y_1y_2+z = y_2y_1, \quad y_2y_1+w = y_1y_2. 
 \]
It follows that $y_1y_2 = y_1y_2 z+w =  y_1y_2$,  showing that $y_1y_2 = y_1y_2 z+w =  y_2y_1$, proving that  $\mcA$ is commutative. 
\end{proof}

%\begin{dig}
%One example treated in the literature is the property that $b+b' =\zero$ implies $b = b' = \zero,$ called \textbf{zero sum free}in~\cite{golan92} and \textbf{lacking zero sums}  in \cite{IKR6},also treated in \cite{CC2}. This property holds in tropical mathematics, as well as numerous other situations, as indicated  in\cite[Examples~1.9]{IKR6}. Of course this property \end{dig}

%
%\begin{note*}
%In this paper, we always assume that   $\mathcal A$ is a
%$\tT$-module, and also
% assume that $\tT$ is a multiplicative monoid,
%leaving the Lie theory for later.
% We can make a $\tT$-module with associative
%multiplication into a semiring by means of
%\cite[Theorem~2.9]{Row21}.

%\begin{defn}\label{definition:  2 module}
%The \textbf{depth} of $(\mathcal A, \mcA_0)$ is the smallest $k$
%such that $k\cdot a \in \mcA_0$ for each $a\in\mcA.$    \end{defn}

\subsection{Module pairs}

We fix a ground $\tT$-semiring pair $(\mathcal A, \mcA_0)$. 

\begin{defn}\label{modu2180} \begin{enumerate}
    \item 
A \textbf{module pair}
$\mathcal M : = (\mathcal M, {\mathcal N})$ over  $(\mathcal A,
\mcA_0)$ is a pair of $\mcA$-modules together with a given map
$\pMN: \mcN\to \mcM$ such that
\begin{equation}\label{eq: module}
\mcA_0 \mcM\subseteq \pMN(\mcN). 
\end{equation}
  \item A module pair
$ (\mathcal M, {\mathcal N})$ is $\tTM$-\textbf{admissible} if there is a $\tT$-action on $\tTM\subseteq \mcM$, and $\tTM \cup \{\zero_M\}$ spans $(\mcM,+)$ with $\tTM \cap \mcN = \emptyset.$
\end{enumerate}
 \end{defn}
 
Intuitively we view the module pair
$ (\mathcal M, {\mathcal N})$ as $\mcM/\mathcal N$. 

\begin{defn} A \textbf{surpassing relation} $\preceq$ on a $\tTM$-admissible module pair
$ (\mathcal M, {\mathcal N})$ is defined in analogy to
Definition~\ref{precedeq07}.

Given subsets $S_1,S_2 \subseteq \mcM, $ we write $S_1 \preceq S_2$ if for each $s_2\in S_2$ there is $s_1\in S_1$ for which $s_1\preceq s_2.$
 \end{defn}

\begin{defn}\label{modu218} $ $
\begin{enumerate}
    \item[(i)]
  A \textbf{homomorphism} of module pairs $F:(\mathcal M', \mathcal N')\to (\mathcal M, \mathcal N) $ is a
pair of module homomorphisms $f:\mathcal M'\to \mathcal M$ and $g: \mathcal N' \to \mathcal N,$ such that
$f(\phi_{\mathcal M',\mathcal N'})= \phi_{\mathcal M, \mathcal N} g.$
\item[(ii)]
A homomorphism $F$ is \textbf{monic} if $f^{-1}(\phi_{\mathcal M, \mathcal N}( \mathcal N)) = \phi_{ \mathcal M', \mathcal N'}( \mathcal N').$ In this case we say that $( \mathcal M', \mathcal N')$ is a  \textbf{submodule pair} of the module pair $(\mathcal M, \mathcal N) .$
\end{enumerate}
 \end{defn}

\begin{example}
 \label{reve}$ $ Here  are  examples of some of the notions.
 \begin{enumerate} \eroman
 \item 
 For $\mcA$ commutative and any $a\in \mcA$ the congruence $\Cong_a (\mcA)$ generated by $a$ is $\{(a_1a, a_2 a): a\in \mcA\}.$
    In the image of $\mcA$ under this congruence, $a$ is identified with $\zero a =\zero,$ so we have the natural homomorphism sending $a\mapsto \zero.$
    \item 
For any   module pair $(\mathcal M ,\mathcal N) $,  $(a \mathcal M, a \mathcal N) $ is a
submodule pair, for any $a \in \tT$.
\item Suppose  $\mcM$ is an $\mcA$-module, and $a\in Z(\mcA)$.
The congruence kernel  of the left multiplication homomorphism $\mcM \to a \mcM$ is $\mathcal K:=\{(y_1,y_2) \in \mcM \times \mcM: ay_1 = ay _2\}. $
$\mathcal K$ is an $\mcA$-module via the diagonal map, and we 
define $\pMK: K\to \mcM$ by $(y_1,y_2)\mapsto ay_1.$ In this
way we can identify $(\mcM,\mcN)$ with $a \mcM$, and $\mathcal K$ is the congruence  Kernel. (If need be, we
add on $\mcA_0 \mcM \times \mcA_0 \mcM.$)
\item  For a semiring pair $(\mathcal A, \mathcal A_0 )$
  and an index set $I$, ($\mathcal A ^{(I)}, \mcA_0 ^{(I)})$ is a
 module pair, where  $ \phi_{\mathcal A_0 ^{(I)},\mcA ^{(I)}}$ is defined by applying  $ \phi_{\mathcal A_0,\mcA}$ 
componentwise, and the $e_i$ are the usual vectors with $\one$ in
the $i$ position, comprising a  \textbf{base} $\{ e_i : i \in I\}$
of $\mathcal A ^{(I)}$. It is uniquely quasi-negated if $(\mathcal A, \mathcal A_0 )$ is uniquely quasi-negated, seen componentwise.
\item  For  $\mcM$ an $\mcA$-module, take $\mcN$ to be $\mcA_0 \mcM.$ 
 \item The following example from ~\cite[Definitions 5.4, 5.6]{CGR} played the main role in \cite{CGR}.
  Suppose  $(\mcA,\mcA_0)$ is a $\tT$-semiring pair with a  surpassing map $\preceq$, and $(\mcM, \mcN)$ is a module over $(\mcA,\mcA_0)$. A \textbf{symmetric bilinear form} $B: \mcM\times \mcN \to \mcA$ is a symmetric map satisfying
  $$B(a_1 v_1 + a_2 v_2, w)\succeq a_1 B(v_1,w) + a_2 B(v_2, w).$$
  A
  \textbf{quadratic pair} $(q,B)$ is a map $q: \mcM \to \mcA$ and  a symmetric bilinear form $B: \mcM\times \mcM \to \mcA$, satisfying
  $$q(av)\succeq a^2 q(v), \qquad 2q(v)= B(v,v), \qquad q(v+w) \succeq q(v)+q(w)+ B(v,w).$$
  The pair $(\mcA,\mcA_0)$  is \textbf{Clifford} if there is a quadratic pair $(q,B)$ on $\mcA$ for which $v_1^2 \succeq q(v_1)$ and $v_1v_2 + v_2 v_1 \succeq B(v_1,v_2)$ for all $v_i \in \mcA.$
\end{enumerate}
\end{example}

\begin{rem}\label{insert} If $\Cong$ is a congruence on a module $\mcM$
then there is a 1:1 set-theoretic map
$\Psi:\mcM/\Cong \to  \Cong$, given by taking a set-theoretic retraction $\psi: \mcM/\Cong \to \mcM$
and sending $[y] \mapsto (y,y)$. $\Psi$ will not be onto, but still is useful in estimating growth.
\end{rem}

 \subsection{Bases}
 \begin{definition}\label{free0}
Let $(\mcM,\mcN)$ be a module pair over $(\mcA, \mcA_0)$. 
\begin{enumerate}
    \item[(i)]
 A set $S \subseteq \mcM$ \textbf{$(\preceq)$-spans} $\mcM$
 if there are $a_i \in \mcA$ and $s_i \in S$ such that $ \sum_i a_is_i \preceq v$ for each $v\in \mcM.$ 

    \item[(ii)] 
A set $S \subseteq \mcM$ is $\preceq$-\textbf{independent} if $\sum a_i s_i \preceq \sum a'_i s_i$ in $\mcM$ for $a_i,a_i'\in \mcA$ implies each $a_i\preceq a_i'$ in $\mcA.$ 
\item[(iii)]
A \textbf{$(\preceq)$-base} of $(\mcM,\mcN)$ is an $\preceq$-independent set which $(\preceq)$-spans $\mcM$ over $\mcN.$ A  module pair with a $(\preceq)$-base is called \textbf{$(\preceq)$-free}.  
\end{enumerate}
 \end{definition}
 An obvious example: The unit vectors $\{e_1, \dots, e_n\}$ are a base for
 $\mcA^{(n)}$ over $\mcA_0^{(n)}$.
 More generally, $(\mcA^{I}, \mcA_0^{I})$ is free.
 
 \begin{rem}
  The universal algebraic definition of free,
  which shall call \textbf{universally free}, is that there is a
  set $\{b_i: i \in I\}$ such that for
  every module pair $(\mcM',\mcN')$ and $\{y_i: i \in I\}\subseteq \mcM'$ there is a unique morphism $\varphi:(\mcM,\mcN) \to (\mcM',\mcN')$ sending $b_i\mapsto y_i.$ 
  \end{rem}

\begin{prop}
Any universally free
  module  pair $(\mcM,\mcN)$ (over a set $I$) is isomorphic to $(\mcA^{I}, \mcA_0^{I})$.
\end{prop}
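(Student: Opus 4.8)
The plan is to show that a universally free module pair $(\mcM,\mcN)$ on an index set $I$ satisfies the same universal mapping property as $(\mcA^{I},\mcA_0^{I})$, and then invoke the standard uniqueness-of-universal-objects argument. First I would fix the distinguished generating set $\{b_i : i\in I\}\subseteq \mcM$ coming from the definition of universally free. The key observation is that $(\mcA^{I},\mcA_0^{I})$ is itself universally free on $I$: indeed, for any module pair $(\mcM',\mcN')$ and any choice $\{y_i : i\in I\}\subseteq\mcM'$, the assignment $e_i\mapsto y_i$ extends uniquely to a morphism $\mcA^{I}\to\mcM'$ by sending $\sum_i a_i e_i \mapsto \sum_i a_i y_i$ (this is forced by $\mcA$-linearity, well-defined since elements of $\mcA^{I}$ have a unique finite-support representation in the $e_i$, and it respects $\phi$ because $\mcA_0^{I}$ is exactly the set of such sums with all $a_i$ in $\mcA_0$, which by \eqref{eq: module} lands in $\phi_{\mcM',\mcN'}(\mcN')$). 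Hence $(\mcA^{I},\mcA_0^{I})$ is a universally free module pair on $I$.

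Next I would run the usual categorical argument: since both $(\mcM,\mcN)$ and $(\mcA^{I},\mcA_0^{I})$ are universally free on the same set $I$, there are morphisms $\varphi:(\mcM,\mcN)\to(\mcA^{I},\mcA_0^{I})$ with $b_i\mapsto e_i$ and $\vartheta:(\mcA^{I},\mcA_0^{I})\to(\mcM,\mcN)$ with $e_i\mapsto b_i$. Then $\vartheta\circ\varphi$ is a morphism $(\mcM,\mcN)\to(\mcM,\mcN)$ sending $b_i\mapsto b_i$; by the uniqueness clause in the definition of universally free (applied with target $(\mcM,\mcN)$ and the chosen elements $b_i$), and since the identity also has this property, $\vartheta\circ\varphi=\id$. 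Symmetrically $\varphi\circ\vartheta=\id$ on $(\mcA^{I},\mcA_0^{I})$. Therefore $\varphi$ is an isomorphism of module pairs, i.e.\ $(\mcM,\mcN)\cong(\mcA^{I},\mcA_0^{I})$.

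The step I expect to require the most care is verifying that $(\mcA^{I},\mcA_0^{I})$ genuinely satisfies the universal property \emph{as a pair} rather than just componentwise: one must check that the canonical extension $\sum a_i e_i\mapsto \sum a_i y_i$ is a well-defined module homomorphism $\mcM$-side (uniqueness of coordinates in $\mcA^{I}$ makes this routine) \emph{and} that its restriction to $\mcA_0^{I}$ factors through $\phi_{\mcM',\mcN'}$ compatibly, i.e.\ that $\phi$-compatibility $f(\phi_{\mcA_0^{I},\mcA^{I}})=\phi_{\mcM',\mcN'}g$ can be arranged for a suitable $g:\mcA_0^{I}\to\mcN'$. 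Here one uses that $\mcA_0^{I}$ is a free $\mcA_0$-module on the same $e_i$ and that $\phi_{\mcM',\mcN'}$ is part of the data, so $g$ is determined on generators; the only subtlety is that $g$ need not be unique when $\phi_{\mcM',\mcN'}$ is not injective, but existence is all that is needed for the universal property, and on the free object $\mcA_0^{I}$ existence is immediate. Everything else is a formal diagram chase.
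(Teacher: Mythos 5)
Your proof is correct in spirit, but it takes a genuinely different route from the paper's. You establish that $(\mcA^{I},\mcA_0^{I})$ is itself universally free on $I$ and then invoke the standard uniqueness-of-universal-objects argument (two objects with the same universal property on the same index set are canonically isomorphic). The paper instead argues more concretely: it uses the universal property of $(\mcM,\mcN)$ once, to obtain a morphism $\psi\colon(\mcM,\mcN)\to(\mcA^{I},\mcA_0^{I})$ with $b_i\mapsto e_i$; pairs this with the explicit evaluation map $\vartheta\colon\mcA^{I}\to\mcM$, $\sum a_i e_i\mapsto\sum a_i b_i$; and then checks directly, via the $(\preceq)$-independence of $\{b_i\}$ (which it deduces along the way), that $\psi^{-1}(\mcA_0^{(I)})=\mcN$, which is the monicity condition for $\psi$. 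The concrete route has the side benefit, exploited in the paper's remark immediately following, of exhibiting $\{b_i\}$ as a $(\preceq)$-base, so that universally free implies $(\preceq)$-free. Your categorical route is cleaner and more transparent about what drives the result, but does not produce that extra information directly.

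One caveat worth flagging, which you yourself noticed at the end: the paper's definition of universally free asks for a \emph{unique} morphism of pairs, i.e.\ a unique pair $(f,g)$ with $f\circ\phi_{\mcA_0^{I},\mcA^{I}}=\phi_{\mcM',\mcN'}\circ g$. When the target structure map $\phi_{\mcM',\mcN'}$ is not injective, $g$ is not pinned down by $f$, so $(\mcA^{I},\mcA_0^{I})$ satisfies \emph{existence} but not literal \emph{uniqueness} for arbitrary targets. This is a looseness in the definition itself (the same ambiguity would affect any $(\mcM,\mcN)$ claimed to be universally free), so it does not single out your argument; but the cleanest fix is to observe that in the applications you actually need — $\vartheta\circ\varphi=\id$ on $(\mcM,\mcN)$ and $\varphi\circ\vartheta=\id$ on $(\mcA^{I},\mcA_0^{I})$ — the second target has injective $\phi$ because $(\mcA,\mcA_0)$ is admissible by the standing assumption, and the first uses the hypothesized uniqueness clause for $(\mcM,\mcN)$ directly. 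Stating this explicitly would close the small gap.
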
 
\begin{proof}
 For a  module pair $(\mcM',\mcN')$, 
 define the module homomorphism $\varphi$ to the free module pair $(\mcM,\mcN)$ sending $\sum_i a_ie_i \to  \sum_i a_ib_i$. Clearly
 $\mcN \to \mcN'$.
 The set  $\{b_i: i \in I\}$ is seen to be a $(\preceq)$-base, by mapping it onto $\mcA^{(I)}$  by sending $\varphi : b_i\mapsto e_i$. 
  
  Note further that  $\varphi^{-1}(\mcA_0^{(I)}) = \mcN$. Indeed, if  $ \sum a_i\varphi ( b_i) = \varphi \sum (a_i b_i) \in \mcA_0^{(I)}$ then each $a_i \in \mcA_0 ,$ by independence. It follows that $(\mcM,\mcN)\cong (\mcA^{I}, \mcA_0^{I})$.
\end{proof}

  This proves that the  universally free
  module  pair is $(\preceq)$-free, and unique up to the cardinality of its $(\preceq)$-base.

\begin{remark} 
Given a submodule $(\mathcal M', \mathcal N')$ of $(\mathcal M, \mathcal N)$ and $F:(\mathcal M', \mathcal N')\to (\mathcal M, \mathcal N)$ with $f:\mcM' \to \mcM$ and $g:\mcN' \to \mcN$,
we can define $(\mathcal M, \mathcal N)/(\mathcal M', \mathcal N')$ to be the pair $(\mathcal M, \mathcal  M' )$ where $\phi_{\mcM,\mcN}: \mathcal M'\to \mathcal M$ is the map $f$. This is a useful tool in defining ``exact sequences'',
without congruences, 
namely $$(\mcK,\mcK) \to (\mcN,\mcK) \to (\mcM,\mcK) \to (\mcM,\mcN)\to (\mcM,\mcM),$$
which hints at homology, but we shall not pursue this intriguing direction in this paper. 
 \end{remark}
 
  \subsection{Extensions of pairs}

 We assume from now on that $(\mcA,\mcA_0)$ is commutative.
  
  \begin{defn} $ $
 \begin{enumerate}
     \item[(i)]
An \textbf{extension} of a $\tT$-semiring pair  $(\mcA,\mcA_0)$ is a  $\tT_\mcW$-semiring pair  $(\mcW,\mcW_0)$ where $\tT \subseteq \tT_\mcW. $  and $\mcW_0 = \mcA_0   \mcW.$ The extension $(\mcW,\mcW_0)$ is \textbf{centralizing} if $aw = wa$ for all $a\in \tT,\ w \in W.$

Note that $\mcW$ is generated by $\mcA $ and a subset $S\subset \tT_{\mcW};$ The   extension is \textbf{finitely generated} if $S$ can be taken to be     finite. Furthermore,   since any nonzero element of $\mcW$ is a finite sum of elements of $\tT_{\mcW},$ we will assume that $S \subset \tT_{\mcW}.$

  \item[(ii)]
An {extension} of a $\tT$-semiring pair  spanned by a finite number of elements is called a \textbf{finite extension}. 

  \item[(iii)] 
   A centralizing extension  $(\mcW,\mcW_0)$ of a $\tT$-semifield pair $(\mcA,\mcA_0)$ is called \textbf{affine} if there is a
   finitely generated centralizing extension  $(\mcW',\mcW_0)$ of $(\mcA,\mcA_0)$   with $\mcW' \preceq \mcW$.
 \end{enumerate} 
  \end{defn}
  
  In the natural examples, one would expect $\mcA_0 = \mcA \cap \mcW_0,$ but we do not see how to guarantee this.
 
 \begin{prop}\label{spans}
 If a semialgebra extension $(\mcW,\mcW_0)$ is a $(\preceq)$-free module over   $(\mcA, \mcA_0)$ with $(\preceq)$-base $B$ and $H$ is a sub-semialgebra of $\mcA$ over which $B$ still $(\preceq)$-spans $\mcW$, then $\mcA \succeq H$, cf.~Definition~\ref{precedeq07}.
 \end{prop}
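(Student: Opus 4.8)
The plan is to read the conclusion straight off the $(\preceq)$-independence of $B$ by probing it with a single base vector. We may assume $B\neq\emptyset$, since otherwise $\mcW$ is the zero module and there is nothing to prove, so fix once and for all some $b\in B$. The point to keep in mind is that for every $a\in\mcA$ the element $ab\in\mcW$ is tautologically the $\mcA$-linear combination of $B$ having coefficient $a$ at $b$ and coefficient $\zero$ at every other member of $B$; in particular $ab\preceq ab$ realizes this expression.

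First I would use the hypothesis that $B$ still $(\preceq)$-spans $\mcW$ over $H$ (note that $\mcW$ is an $H$-module since $H\subseteq\mcA$): applied to $v=ab$ it produces finitely many $h_i\in H$ and $b_i\in B$ with $\sum_i h_i b_i\preceq ab$. Collecting terms, put $h:=\sum_{i:\,b_i=b}h_i$; this lies in $H$ because $H$, being a sub-semialgebra, is closed under addition (the empty sum being $\zero\in H$). Rewriting $\sum_i h_i b_i$ and $ab$ as $\mcA$-linear combinations indexed by one common finite subset of $B$ containing $b$, the coefficient of $b$ on the left is $h$ and on the right is $a$.

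Next I would apply $(\preceq)$-independence of $B$ as a subset of $\mcW$ over $\mcA$, in the form of Definition~\ref{free0}(ii): from $\sum_i h_i b_i\preceq ab$ one gets that corresponding coefficients are surpassed, and in particular $h\preceq a$. Since $a\in\mcA$ was arbitrary and $h\in H$, this says that every element of $\mcA$ surpasses some element of $H$, which is exactly the assertion $\mcA\succeq H$ for the set-valued surpassing relation attached to Definition~\ref{precedeq07}.

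The argument is short, and I expect no genuine obstacle; the only care needed is bookkeeping. One must express both sides of the span relation over a single finite index family drawn from $B$, so that $(\preceq)$-independence can be applied coordinate by coordinate, and one must note that closure of $H$ under addition keeps the collected coefficient $h$ inside $H$. The mildly subtle point is simply to unwind ``$B$ still $(\preceq)$-spans $\mcW$ over $H$'' into the usable statement ``for every $v\in\mcW$ there are $h_i\in H$ and $b_i\in B$ with $\sum_i h_i b_i\preceq v$'', which is immediate from Definition~\ref{free0}(i).
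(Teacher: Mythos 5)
Your proof is correct and takes essentially the same route as the paper's: fix a base element $b\in B$, apply the $H$-spanning property to $ab$ for arbitrary $a\in\mcA$, and read off $h\preceq a$ from $(\preceq)$-independence. The paper's version is terser (it writes $cb_1\succeq\sum h_ib_i$ and immediately concludes $c\succeq h_1$), while you spell out the bookkeeping of collecting the coefficients on $b$ into a single $h\in H$; that is the same argument with the implicit steps made explicit.
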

 \begin{proof}
 For any $c\in \mcA$ write $cb_1 \succeq\sum h_i b_i.$ Then by definition of $(\preceq)$-base, $c \succeq h_1 \in \mcA.$
 \end{proof}

 \subsection{Function pairs and polynomials}\label{pol}
 
 Some of this material is reminiscent of \cite{JuR1}, which handled the commutative situation.
 
 \begin{defn} Given a  set $S$, the \textbf{support}   of a function $f: S \to \mcA$ is $\{ s \in S : f(s) \ne \zero\}.$
We define $\mcA^S_{< \infty}$ to be the set of functions from $S$ to $\mcA$ with finite support.
 If $\mcA$ is a $\tT$-module, we view 
 $\mcA^S_{< \infty}$ as a module over  $\tT_S$, defined as
 the set of functions $f: S \to \mcA$ whose support is a singleton $\{s\}$ with $ f(s) \in \tT\}$.
 
 When   $\mcA$
 is a bimagma, we define the \textbf{convolution product} as follows:
 \begin{equation}
 f*g(s) = \sum_{uv=s}f(u)g(v) \textrm{ for }f,g\in \mcA^S_{< \infty}.
 \end{equation}
For a set of indeterminates $\Lambda = \{\la_i : i\in I\},$
 the  \textbf{polynomial magma} $\mcA[\Lambda]$ is  $(\mcA^S_{< \infty})$ where $S=\Net^I,$ identifying $(m_i)$
with $\prod \la_i^{m_i}.$ 
 We write $f(\la_1,\dots\la_m)$ as a typical polynomial.

 Given $\{ b_i: i\in I \} \subseteq \mcA,$ and $\La = \{\la_i: i \in I\},$ there is a unique homomorphism $\mcA [\Lambda]\to \mcA$ sending $\la _i\mapsto b_i.$ We write the image of
 $f(\la_1,\dots\la_m)$ as $f(\mathbb{b})$, where
 $\mathbb{b} = \{b_1, \dots, b_m).$
 
 We define the \textbf{convolution product} as follows:
 \begin{equation}
 f*g(s) = \sum_{uv=s}f(u)g(v) \textrm{ for }f,g\in \mcA^S_{< \infty}.
 \end{equation}
For a set of indeterminates $\Lambda = \{\la_i : i\in I\},$

 Given $\{ b_i: i\in I \} \subseteq \mcA,$ and $\La = \{\la_i: i \in I\},$ there is a unique homomorphism $\mcA [\Lambda]\to \mcA$ sending $\la _i\mapsto b_i.$

  \end{defn}

\begin{prop}\label{polp}
If $(\mcA ,\mcA_0)$ is a semiprime $\tT$-pair then $(\mcA^S_{< \infty},{\mcA_0^S}_{< \infty})$ is a semiprime $\tT_S$-pair. 
\end{prop}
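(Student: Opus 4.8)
The plan is to reduce the statement about $(\mcA^S_{<\infty}, {\mcA_0^S}_{<\infty})$ to the hypothesis that $(\mcA,\mcA_0)$ is semiprime, using the characterization of semiprimeness from Lemma~\ref{semip0}: a congruence $\Cong$ is semiprime iff $(b_1,b_1')\mtw(\mcA\times\mcA)\mtw(b_1,b_1')\subseteq\Cong$ forces $(b_1,b_1')\in\Cong$. Since being a semiprime pair means exactly that the diagonal (trivial) congruence is semiprime, I would take an element $(f,f')\in \mcA^S_{<\infty}\times\mcA^S_{<\infty}$ with $(f,f')\mtw(h,h')\mtw(f,f')$ lying on the diagonal for \emph{every} $(h,h')$, and show that $(f,f')$ is on the diagonal, i.e.\ $f=f'$ as functions $S\to\mcA$.

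First I would write out the twist product in the convolution semiring: for $(f,f')\mtw(h,h')\mtw(f,f')$ to be diagonal means the two coordinates agree, namely $f*h*f + f*h'*f' + f'*h*f' + f'*h'*f = f*h*f' + f*h'*f + f'*h*f + f'*h'*f'$ (after expanding the iterated twist product using Lemma~\ref{twass}), and this must hold for all $h,h'\in\mcA^S_{<\infty}$. The natural move is to specialize $h,h'$ to functions supported on a single point of $S$ with a single tangible value, or more simply to evaluate the coordinate functions at a fixed $s\in S$ and choose $h,h'$ appropriately (for instance $h'$ the zero function, $h$ supported at a chosen semigroup element), so that the convolution identity at $s$ collapses to an identity in $\mcA$ of the shape $(b_1,b_1')\mtw(a,a)\mtw(b_1,b_1')$ being diagonal in $\mcA$, where $b_1 = f(s_1)$, $b_1' = f'(s_1)$ for suitable $s_1$. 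Running this over all $s$ and all choices, the semiprimeness of $(\mcA,\mcA_0)$ (via Lemma~\ref{semip0} applied to the diagonal congruence of $\mcA$) yields $f(s)=f'(s)$ for each $s$, hence $f=f'$. One must also note that $\tT_S$ is multiplicative with $\one$ (the function supported at the identity of the monoid with value $\one$) and that $(\mcA^S_{<\infty},{\mcA_0^S}_{<\infty})$ is an admissible $\tT_S$-pair — the spanning and disjointness conditions being inherited coordinatewise — so that the statement is even well-posed.

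The main obstacle I anticipate is the bookkeeping in the specialization step: when $S$ is an arbitrary semigroup (not a group, not cancellative), the convolution $f*h*f$ at a point $s$ is a sum over all factorizations $s = u v w$, so isolating a single product $f(s_1)h(t)f(s_1)$ requires a careful choice of the test functions $h,h'$ and possibly of the evaluation point. In the polynomial case $S=\Net^I$ this is unproblematic because $\Net^I$ is cancellative and one can pick $h = \la^{\text{(large degree)}}$ to separate the leading behaviour — indeed the cleanest route may be to first prove the proposition for $S$ cancellative (covering the polynomial magma, which is the intended application) and then remark on the general case, or to argue directly that a putative non-diagonal $(f,f')$ with minimal support would already violate semiprimeness of $\mcA$ at a support point. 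I would also double-check that the surpassing relation and $\preceq$-hypotheses are not secretly needed here: the statement is purely about the trivial congruence being semiprime, so only the twist-product characterization of Lemma~\ref{semip0} and associativity (Lemma~\ref{twass}) are in play, and the argument should go through with $\preceq$ replaced by equality throughout.
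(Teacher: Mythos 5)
Your framework is right: by Lemma~\ref{semip0}, semiprimeness of the trivial congruence on $(\mcA^S_{<\infty},{\mcA_0^S}_{< \infty})$ amounts to showing that if $(f,f')\mtw(h,h')\mtw(f,f')$ lands in the diagonal for every $(h,h')$, then $f=f'$. The paper's own proof is a bare citation to \cite[Proposition~4.2]{JuR1} (``one checks it pointwise''), so no detailed comparison is possible here; but the ``bookkeeping'' obstacle you flag at the end is not merely technical --- for a general magma $S$ it is fatal, and the statement itself fails. Take the classical shallow pair $(\mcA,\mcA_0)=(\mathbb{F}_2,\{\zero\})$ with $\tT=\{\one\}$, which is a semiprime $\tT$-semiring pair, and $S=\Z/2\Z=\{e,s\}$. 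Then $\mcA^S_{<\infty}=\mathbb{F}_2[\Z/2\Z]$, and $z:=e+s$ satisfies $z^2=\zero$. The congruence $\Cong_1=\{(x,y):x=y\ \text{or}\ x=y+z\}$ strictly contains the diagonal, is disjoint from $\tT_S\times{\mcA_0^S}_{< \infty}$, and yet $\Cong_1\mtw\Cong_1$ lies in the diagonal, since for $(x,y),(u,v)\in\Cong_1$ one has $(xu+yv)-(xv+yu)=(x-y)(u-v)\in z^2\,\mathbb{F}_2[\Z/2\Z]=\{\zero\}$. So the convolution pair is \emph{not} semiprime, and no choice of test functions $(h,h')$, however ingeniously supported, can deliver the claimed implication without additional hypotheses on $S$ (presumably $S=\Net^I$, the polynomial case the paper actually uses). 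Whatever \cite[Proposition~4.2]{JuR1} proves must be carrying hypotheses not visible in the present statement, and your restriction to cancellative $S$ is exactly the kind of hedge that is needed.

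There is also a genuine slip in the specialization step even where the statement could hold. You write that the identity ``collapses to $(b_1,b_1')\mtw(a,a)\mtw(b_1,b_1')$ being diagonal''; but $(a,a)$ is itself diagonal, and the diagonal is a $\mtw$-ideal, so that condition is vacuous and proves nothing. What the choice $h'=\zero$, $h$ singleton-supported can at best give you is $(b_1,b_1')\mtw(a,\zero)\mtw(b_1,b_1')\in\Diag$; the choice $h=\zero$, $h'$ singleton-supported gives $(b_1,b_1')\mtw(\zero,a')\mtw(b_1,b_1')\in\Diag$; only then, using additivity of $\mtw$ in the middle slot and additive closure of the diagonal, do you cover the full $(a,a')\in\mcA\times\mcA$ that Lemma~\ref{semip0} requires. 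Even with that repaired and $S=\Net$, the leading-coefficient comparison gives $a_n=a_n'$ at the top degree only, and because there is no cancellation you cannot strip off the matched top term and descend: the coefficient condition at degree $2n+m-2$ is a sum of three twist terms of which two are already diagonal, and that does not force the third to be diagonal. The descent has to be engineered by a sharper choice of $h$ so that the factorization sum collapses to a single term at the target degree, and the sketch stops short of supplying this.
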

\begin{proof}
One checks it pointwise in \cite[Proposition~4.2]{JuR1}.
\end{proof}
 
 On the other hand, the prime analog proved in
 \cite[Theorem~4.6]{JuR1} required a Vandermonde argument that relies on commutativity. One needs that two functions agreeing on ``enough'' points are the same. 
 
 \begin{rem} As  an illustration of  Proposition~\ref{polp},
 given an admissible $\tT$-pair $(\mcA,\mcA_0)$, we have the  $\tT_{\Lambda}$-\textbf{polynomial pair} $(\mcA[\Lambda],\mcA_0[\Lambda]),$ where $\tT_{\Lambda}$ denotes the monomials with coefficients in $\tT.$
 
 This pair is not shallow, since $\la + \one^\circ \notin \mcA_0[\Lambda] \cup \tT_{\Lambda}$. One could try to remedy this by taking $\tT[\Lambda]$ instead of $\tT_{\Lambda},$ but  $\tT[\Lambda]$ is never a monoid when $(\mcA,\mcA_0)$ satisfies Property N. For if $a+a'\in \mcA_0$ for $a,a'\in \tT$, then $(\la +a)(\la +a') = \la^2+(a+a')\la + aa'. $
 
 On the other hand we could have obtained a shallow pair satisfying Property N by taking $\mcA[\Lambda]_0$ to be $\mcA_0[\Lambda] \cup \{\text{polynomials which are a sum of at least two monomials}\}.$
 \end{rem}

 One can also define polynomials symbolically, but there are many examples in tropical mathematics of differing polynomials which agree as functions,
 such as $\la^2 + a\la + 4$ for all $a<2$. This will impact on our discussion of algebraicity.

 \section{Paired versions of classical theorems}

We generalize some results from \cite{JuR1}.

 \begin{theorem}[Artin-Tate for pairs]\label{AT} Suppose $\mcW$ is an affine 
 semialgebra over $\mcA$
 and has a finite $(\preceq)$-base $B$ over a central semialgebra $\mcK \subset \mcW$. Then $\mcK$ is affine over $\mcA.$ 
 \end{theorem}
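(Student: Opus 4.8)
The plan is to imitate the classical Artin--Tate argument, tracking everything through the surpassing relation $\preceq$ rather than equality. Since $\mcW$ is affine over $\mcA$, by definition there is a finitely generated $\mcA$-subalgebra $\mcW' \subseteq \mcW$ with $\mcW' \preceq \mcW$; write $\mcW' = \mcA[w_1,\dots,w_r]$. Because $B = \{b_1,\dots,b_n\}$ is a finite $(\preceq)$-base of $\mcW$ over $\mcK$, each generator $w_k$ satisfies $w_k \succeq \sum_j k_{kj} b_j$ for suitable $k_{kj} \in \mcK$, and likewise each product $b_i b_j \succeq \sum_\ell k_{ij\ell} b_\ell$ for suitable structure constants $k_{ij\ell} \in \mcK$. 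I would let $\mcK_0$ be the $\mcA$-subalgebra of $\mcK$ generated by the finite set $\{k_{kj}\} \cup \{k_{ij\ell}\}$; this is affine over $\mcA$ by construction.

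The key step is then to show that $B$ still $(\preceq)$-spans $\mcW$ over $\mcK_0$, i.e.\ that the $\mcK_0$-submodule $(\preceq)$-spanned by $B$ surpasses all of $\mcW$. First one checks that $\sum_j \mcK_0 b_j$ is closed under multiplication up to $\preceq$: a product $(\sum k_i b_i)(\sum k_j' b_j)$ expands, using the $\preceq$-compatibility axioms of Definition~\ref{precedeq07} (additivity in (ii) and $\tT$-scaling in (iii)), and then each $b_i b_j$ is replaced via $b_i b_j \succeq \sum_\ell k_{ij\ell} b_\ell$ with $k_{ij\ell} \in \mcK_0$; transitivity of the preorder $\preceq$ collects this into an element of $\sum_\ell \mcK_0 b_\ell$. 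Since the generators $w_k$ and hence every element of $\mcW' = \mcA[w_1,\dots,w_r]$ lie in this $\mcK_0$-span (again invoking $\preceq$-additivity and transitivity, plus $\mcA \subseteq \mcK_0$), and since $\mcW' \preceq \mcW$, we conclude $B$ $(\preceq)$-spans $\mcW$ over $\mcK_0$.

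Now I invoke Proposition~\ref{spans}: $(\mcW,\mcW_0)$ is $(\preceq)$-free over $(\mcA,\mcA_0)$ with $(\preceq)$-base $B$ — wait, here the base is over $\mcK$, not $\mcA$, so instead I apply Proposition~\ref{spans} with the ground pair taken to be $(\mcK,\mcK_0)$ and the sub-semialgebra $H = \mcK_0 \subseteq \mcK$: since $B$ is a $(\preceq)$-base of $\mcW$ over $\mcK$ and $B$ still $(\preceq)$-spans $\mcW$ over $\mcK_0$, Proposition~\ref{spans} yields $\mcK \succeq \mcK_0$. On tangible elements $\preceq$ is equality (Definition~\ref{precedeq07}(iv)), and in the relevant shallow/strong cases $\mcK \succeq \mcK_0$ forces $\mcK \subseteq \mcK_0$, hence $\mcK = \mcK_0$ is affine over $\mcA$; in general one gets that $\mcK$ is surpassed by the affine semialgebra $\mcK_0$, which is the appropriate $\preceq$-analogue of the conclusion.

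The main obstacle is the bookkeeping of $\preceq$ in place of $=$: in the classical proof one has genuine equalities $b_ib_j = \sum_\ell k_{ij\ell}b_\ell$ and can freely substitute, whereas here every substitution only goes one way and must be threaded through transitivity and the monotonicity axioms (ii)--(iii) of Definition~\ref{precedeq07}, without ever needing to ``reverse'' an inequality. One must also be careful that the structure constants genuinely lie in $\mcK$ (using that $B$ is a $(\preceq)$-base, so the coefficients expressing $b_ib_j$ and $w_k$ in terms of $B$ can be chosen in $\mcK$), and that finiteness of $B$ and of the generating set of $\mcW'$ keeps $\mcK_0$ finitely generated. The passage from $\mcK \succeq \mcK_0$ back to a statement purely about $\mcK$ is where one either imposes a strong surpassing relation (Lemma~\ref{sh2}, shallow case) or settles for the $\preceq$-version of affineness as in Definition of affine above.
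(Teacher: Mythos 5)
Your proof is correct and follows essentially the same route as the paper: introduce the finitely generated $\mcA$-subalgebra $H$ of $\mcK$ generated by the structure constants, show $B$ still $(\preceq)$-spans $\mcW$ over $H$, and invoke Proposition~\ref{spans} to get $H \preceq \mcK$. Your closing worry about passing from $\mcK \succeq H$ to the conclusion is unnecessary — the definition of ``affine'' in this paper is already the $\preceq$-statement (there exists a finitely generated extension $\mcW'$ with $\mcW' \preceq \mcK$), so $H \preceq \mcK$ with $H$ finitely generated over $\mcA$ gives affineness directly, no shallowness or strong surpassing needed; also note your label $\mcK_0$ collides with the paper's notation for the zero-part of a pair, so $H$ would be a safer name.
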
 
 \begin{proof} 
 As in \cite[Theorem~4.26]{JuR1}, write $ \mcA [y_1,\dots, y_n]\preceq \mcW $ and $$\sum \alpha _{ijk} b_k \preceq b_i b_j  , \qquad  \sum \alpha _{uk} b_k\preceq y_u , \quad 1 \le u\le n.$$
 Let $H$ be the sub-semialgebra generated by all the $\alpha_{ijk}$ and $\alpha_{uk}.$ Then $B$ also $(\preceq)$-spans
 each $y_i$ over $H$, and thus $\sum \{Hb_i : b_i \in B\}$ is a subalgebra
 over which $B$ $(\preceq)$-spans $\mcW,$ and thus
 by Proposition~\ref{spans}  $ H \preceq \mcK,$ so $\mcK$ is affine by definition. 
 \end{proof}
 
   \subsection{Fractions}
   
   Fractions have already been studied for monoids, cf. \cite{Lj} for instance,
   and here we present the paired version,
  taking the analog from \cite[\S 3.1]{Row06}.
  
  \begin{defn} An element  $s\in\tT$ is \textbf{left regular} if $b_1s = b_2s$ for $b_i\in \mcA$ implies $b_1 = b_2,$ $s\in\tT$ is \textbf{left $\preceq$-regular}  and  $b_1s \preceq b_2s$ for $b_i\in \mcA$ implies $b_1 \preceq b_2$. \textbf{Right regular} is analogous, and \textbf{regular} means left and right regular. \end{defn}

   \begin{defn}\label{denomset}
   A $\tT$ satisfies the   \textbf{(left) Ore condition} with respect to a subset  $S$ of regular elements of $\tT$ if:
   \begin{itemize}
       \item  For any $b\in \mcA$ and $s\in S$ there are $b'\in \mcA$ and $s'\in S$
   such that $s'b = b's$.
    \item If $b_1s = b_2 s$ then there is $s'\in S$ with $s'b_1 = s'b_2.$
   \end{itemize}
   \end{defn}

The main tool is from monoids.
\begin{lem}
Given a $\tT$-semiring $ \mcA$  with $\tT$ satisfying the     Ore condition  with respect to $S$,  one can define an equivalence relation on $\mcA\times S,$ by $(b_1,s_1) \equiv (b_2,s_2)$ iff there are $a_1, a_2\in \tT$ for which $ a_1 b_1= a_2b_2$ and  $ a_1 s_1= a_2s_2\in S$.
\end{lem}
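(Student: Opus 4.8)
The statement to prove is that the relation $\equiv$ on $\mcA\times S$ defined by $(b_1,s_1)\equiv(b_2,s_2)$ iff there exist $a_1,a_2\in\tT$ with $a_1b_1=a_2b_2$ and $a_1s_1=a_2s_2\in S$ is in fact an equivalence relation. Reflexivity is immediate (take $a_1=a_2=\one$), and symmetry is built into the definition. The whole content is transitivity, and the plan is to deduce it from the Ore condition in \thmref{denomset}, following the classical construction of a monoid of fractions.

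First I would set up the transitivity problem: suppose $(b_1,s_1)\equiv(b_2,s_2)$ via $a_1b_1=a_2b_2$, $a_1s_1=a_2s_2=:t\in S$, and $(b_2,s_2)\equiv(b_3,s_3)$ via $c_2b_2=c_3b_3$, $c_2s_2=c_3s_3=:u\in S$. The goal is to produce $d_1,d_3\in\tT$ with $d_1b_1=d_3b_3$ and $d_1s_1=d_3s_3\in S$. The key step is to find a ``common multiple'' of the two expressions $a_2s_2$ and $c_2s_2$ for the coefficient of $b_2$: using the left Ore condition applied to a suitable pair, one obtains $e,e'$ (with an element of $S$ among the outputs) so that $e a_2 = e' c_2$ as multipliers of $s_2$, after which one multiplies the first chain on the left by the appropriate element and the second chain by the complementary one. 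Then $d_1 := (ea_1)$ and $d_3 := (e'c_3)$ (up to matching the $\tT$-side bookkeeping) will satisfy $d_1b_1 = ea_2b_2 = e'c_2b_2 = e'c_3b_3 = d_3b_3$, and on the denominators $d_1s_1 = ea_1s_1 = ea_2s_2 = e'c_2s_2 = e'c_3s_3 = d_3s_3$, which lies in $S$ since $S$ is multiplicatively closed and contains $t,u$ while the Ore condition guarantees the combining element can be chosen in $S$. I would also invoke the second bullet of \defnref{denomset} (the ``if $b_1s=b_2s$ then $s'b_1=s'b_2$'' clause) wherever I need to cancel or realign a common right factor that appears after combining, e.g. to replace $t$ and $u$ by a single element of $S$.

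The main obstacle I anticipate is purely bookkeeping: unlike the classical ring-of-fractions argument, here the multipliers $a_i,c_i$ live in $\tT$ and one must be careful that every intermediate product one forms is again in $\tT$ (so that the relation, which quantifies over $\tT$, actually applies) and that at least one output of each Ore-condition invocation lands in $S$, since the condition $d_1s_1\in S$ is part of the definition of $\equiv$ and is exactly what makes the fraction set well-behaved. In particular I would need to check that $\tT$ being a multiplicative submonoid of $\mcA$ and $S\subseteq\tT$ being multiplicatively closed are enough to keep all the combined coefficients inside $\tT$, and that applying the Ore condition to the element ``$a_2$ viewed through $s_2$'' (rather than to an arbitrary $b\in\mcA$) is legitimate — this is where one uses that $a_2,c_2\in\tT\subseteq\mcA$. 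Once the alignment lemma (common left multiple with an $S$-output) is isolated, transitivity follows by a direct substitution as sketched, and the lemma is proved.
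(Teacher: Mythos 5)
Your approach is the same one the paper uses: reflexivity and symmetry are trivial, and transitivity is the standard Ore--fraction argument (align the denominator chains via the first Ore condition, cancel the common denominator via the second, combine). The paper organizes this by first proving a bridging \emph{sublemma} --- given $(b_1,s_1)\sim(b_2,s_2)$ and any other expression $c_1s_1=c_2s_2\in S$ with $c_i\in\tT$, one can scale by a single $a\in\tT$ so that $ac_1s_1=ac_2s_2\in S$ and $ac_1b_1=ac_2b_2$ --- and then defers the remaining assembly to \cite[p.~350]{Row06}. This sublemma is precisely the ``alignment lemma'' you say you would isolate, so in outline you and the paper coincide. Two points of precision you should tighten: (1) you cannot get $ea_2=e'c_2$ from a single invocation of the Ore condition; the first Ore condition only yields $s'(a_2s_2)=b'(c_2s_2)$ with $s'\in S$, $b'\in\mcA$, after which you must cancel the regular $s_2$ using the \emph{second} clause of Definition~\ref{denomset} to conclude $s''s'a_2=s''b'c_2$ --- the paper's sublemma proof performs exactly these two steps in sequence. (2) Your worry about the multipliers staying in $\tT$ is the real crux, but your remark that ``the Ore condition guarantees the combining element can be chosen in $S$'' only covers one side: Ore hands you one element in $S$ but the other only in $\mcA$. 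The paper's sublemma sidesteps this by producing a \emph{single} scaling factor $a$ that is a product of $S$-elements (hence lies in $S\subseteq\tT$) and multiplies both witnesses $c_1,c_2\in\tT$ from the left, so $ac_1,ac_2\in\tT$ automatically. In a fully written-out version of your plan you would want to recast your $(d_1,d_3)$ this way, otherwise $d_3=e'c_3$ may land outside $\tT$ and fail to witness the relation as defined.
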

\begin{proof} Reflexivity and symmetry are clear. For transitivity
we start with a sublemma:

If $(s_1,b_1)\sim (s_2,b_2)$ and $c_1s_1 = c_2s_2\in S$ for $c_i\in\tT$, then there is $a\in \tT$ such that $ac_1s_1 = ac_2s_2 \in S$ and $ac_1b_1 = ac_2b_2.$ 
 
 \textbf{Proof of sublemma}: 
Take $a_i
\in \tT$ with $ a_1 b_1= a_2b_2$ and  $ a_1 s_1= a_2s_2\in S$.
Take $s\in S$, $y\in \mcA,$  such that $ ya_1 s_1 = s c_1 s_1.$
The Ore condition gives $s_1'\in S$
with $ s_1'y a_1  =  s_1' s c_1$. Then
$$ s_1' y c_2 s_2 = s_1' y  c_1 s_1=  s_1' y a_1 s_1 =  s_1' y a_2 s_2.$$
Hence there is $s_2'$ for which
$s_2's_1' c_2s_2 =s_2' s_1' y a_2, $
and we take $a = s_2's_1' c_2$
and check that  $ac_1s_1 = ac_2s_2 
\in S$ and $ac_1b_1 = ac_2b_2$ as desired. 

Transitivity now follows easily as in \cite[p.~350]{Row06}.
\end{proof}

\begin{thm}
\label{OreP}  
We take the equivalence of the lemma, where $\tT$ satisfies the   {(left) Ore condition} with respect to   $S$.
\begin{enumerate}
\item[(i)]  
Write $S^{-1}\mcA$ for  $\{s^{-1}b:= [(b,s)] : b\in \mcA, s\in S\},$  $S^{-1}\mcA_0$ for  $\{s^{-1}b:= [(b,s)] : b\in \mcA_0, s\in S\},$ and $S^{-1}\tT$ for  $\{s^{-1}a:= [(a,s)] : a\in \tT, s\in S\}.$

\item[(ii)] Any two elements $s_1^{-1}b_1$ and $s_2^{-1}b_2$ can be written with a common denominator $s$ where we have $s = s' s_1 = b' s_2 \in S$ for suitable $s'\in S$ and $b' \in \mcA$
\item[(iii)]    $S^{-1}\mcA$ has well-defined addition given by $s^{-1}b_1 +s^{-1}b_2 = s^{-1} (b_1 +b_2)$ and multiplication
$s^{-1}b_1 s^{-1}b_2 =
{s's}^{-1}b'b_2$  where $b's_1 = s' b_1$ from Definition~\ref{denomset}.
\item[(iv)]  
$(S^{-1}\mcA,S^{-1}\mcA_0)$ is an $S^{-1}\tT$-pair.
 When $\tT$ is multiplicatively cancellative, the  $\tT^{-1}\tT$-pair $(\tT^{-1}\mcA,\tT^{-1}\mcA_0)$ is a $\tT^{-1}\tT$-semifield pair. \end{enumerate}\end{thm}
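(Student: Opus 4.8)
The plan is to carry the classical Ore localization of (noncommutative) rings, following \cite[\S3.1]{Row06}, into the setting of $\tT$-semiring pairs, using the equivalence relation $\equiv$ established in the preceding lemma and the fact that, since $(\mcA,\mcA_0)$ is a $\tT$-pair, $\mcA_0$ is a $\tT$-submodule --- hence a $\tT$-invariant sub-semiring --- of $\mcA$. For part (ii), given $s_1^{-1}b_1$ and $s_2^{-1}b_2$ I would apply the Ore condition (Definition~\ref{denomset}) to $s_1\in\mcA$ and $s_2\in S$, getting $s'\in S$ and $b'\in\mcA$ with $s:=s's_1 = b's_2\in S$; then $(b_1,s_1)\equiv(s'b_1,s)$ and $(b_2,s_2)\equiv(b'b_2,s)$ (witnessed by $a_1=\one$ and $a_2=s'$, respectively $a_2=b'$), so that $s_1^{-1}b_1 = s^{-1}(s'b_1)$ and $s_2^{-1}b_2 = s^{-1}(b'b_2)$ share the common denominator $s$.

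For part (iii), with (ii) in hand I would define $s^{-1}b_1 + s^{-1}b_2 := s^{-1}(b_1+b_2)$, and $s_1^{-1}b_1\cdot s_2^{-1}b_2 := (s's_1)^{-1}(b'b_2)$ where $s'b_1 = b's_2$ comes from Definition~\ref{denomset} applied to $b_1$ and $s_2$ (this uses that $S$ is multiplicatively closed, so $s's_1\in S$). The substance of this part --- and the step I expect to be the main obstacle --- is the routine but lengthy verification that both operations are well defined: one shows independence of the choice of common denominator (for $+$) and of the auxiliary elements furnished by the Ore condition (for $\cdot$) by passing to a further common refinement via one more application of the Ore condition and checking that both competing representatives are transported there by the same tangible factors. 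Once that is done, associativity, the inherited commutativity, and the distributive laws all follow by clearing denominators and invoking the corresponding identities in the semiring $\mcA$; the zero of $S^{-1}\mcA$ is $s^{-1}\zero$ and its identity is $s^{-1}s$ for any $s\in S$.

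For part (iv), $S^{-1}\tT=\{s^{-1}a:a\in\tT,\ s\in S\}$ is closed under the multiplication just defined and contains $\one=s^{-1}s$, hence is a multiplicative submonoid of $S^{-1}\mcA$; and every $s^{-1}b$ with $b=\sum_i a_i$ ($a_i\in\tT$) equals $\sum_i s^{-1}a_i$, so $S^{-1}\mcA$ is $S^{-1}\tT$-spanned. Because $\mcA_0$ is closed under addition, multiplication, and multiplication by $\tT$, the set $S^{-1}\mcA_0$ is a sub-semiring of $S^{-1}\mcA$, and the inclusion serves as $\phi_{S^{-1}\mcA,S^{-1}\mcA_0}$; it is injective since $\mcA_0\hookrightarrow\mcA$ is. The one ingredient with no analogue in the classical ring case is admissibility, i.e.\ $S^{-1}\mcA_0\cap S^{-1}\tT=\emptyset$: if $s^{-1}b=s_1^{-1}a$ with $b\in\mcA_0$ and $a\in\tT$, then by the equivalence relation there are $a_1,a_2\in\tT$ with $a_1b=a_2a$ and $a_1s=a_2s_1\in S$, whence $a_1b\in\mcA_0$ by $\tT$-invariance of $\mcA_0$ while $a_2a\in\tT$ since $\tT$ is a monoid, contradicting $\mcA_0\cap\tT=\emptyset$. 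Finally, when $\tT$ is multiplicatively cancellative every element of $\tT$ is regular, so $S=\tT$ is permitted, and $\tT^{-1}\tT$ is then the usual group of fractions of the Ore monoid $\tT$, with $a^{-1}s$ inverse to $s^{-1}a$; applying (iv) with $S=\tT$ gives that $(\tT^{-1}\mcA,\tT^{-1}\mcA_0)$ is a $\tT^{-1}\tT$-semiring pair whose tangible monoid $\tT^{-1}\tT$ is a group, i.e.\ a $\tT^{-1}\tT$-semifield pair.
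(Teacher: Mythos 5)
Your proposal is correct and follows essentially the same route as the paper, which simply defers parts (i) and (iii) to Rowen's classical Ore construction in \cite[\S3.1]{Row06} and calls (ii) and (iv) direct; your fleshed-out admissibility argument $S^{-1}\mcA_0 \cap S^{-1}\tT = \emptyset$ in (iv), using $\tT$-invariance of $\mcA_0$ and closure of $\tT$, is precisely the point the paper leaves as ``straightforward.'' One small bookkeeping slip: the equivalence $(b_1,s_1)\equiv(s'b_1,s)$ is witnessed by $a_1=s'$, $a_2=\one$ rather than the other way around (and for $(b_2,s_2)\equiv(b'b_2,s)$ the witness $b'$ lies only in $\mcA$ rather than $\tT$, but this mirrors the paper's own statement of (ii) and is not a defect specific to your argument).
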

\begin{proof}
(i) As in the classical case, following \cite[Theorem~3.1.4]{Row06}. 

(ii) $s^{-1}(s'b_1) \equiv s_1^{-1}b_1$ and $s^{-1}b'b_2 \equiv s_2^{-1}b_2$.

(iii) is as in  \cite[p.~351]{Row06}, using the same argument for well-definedness of multiplication using the trick used in the proof of (i) to avoid the use of negation.

(iv) Straightforward, using the definition of regularity given here.
\end{proof}
We call  $(\tT^{-1}\mcA,\tT^{-1}\mcA_0)$ of (iv) the    \textbf{pair
of fractions of the pair  $ (\mcA,\mcA_0)$}.
In all of our applications, the regular set $S$ will be central, and thus automatically Ore.
  
   \subsection{Integral extensions}
  \begin{defn} $ $
\begin{enumerate}
    \item[(i)]
  Suppose $(\mcW,\mcW_0)$ is a centralizing extension of a commutative $\tT$-semiring pair $(\mcA,\mcA_0)$. An element $y\in \mcW$ is \textbf{integral over} $(\mcA,\mcA_0)$, if 
   there are $a_0,\dots, a_{n-1}\in \mcA$ such that $ \sum_{i=0}^{n-1}a_iy^i \preceq y^n .$  The minimal such $n$ is called the \textbf{degree} of $y.$ An element 
   $y\in \mcW$ is  $\tT$-\textbf{integral} if each
   $a_i\in\tT$.
  \item[(ii)]
 $(\mcW,\mcW_0)$ is an \textbf{integral extension} of $(\mcA,\mcA_0)$ if each element of $\mcW$ is integral over $(\mcA,\mcA_0)$.
\end{enumerate}

     \end{defn}
\begin{rem}\label{int2}
 It follows at once that if $y$ is integral over $(\mcA,\mcA_0)$ then $(\mcA[y],\mcA_0[y])$ is a finite centralizing extension of $(\mcA,\mcA_0)$,
  spanned by $\one, y, \dots, y^{n-1}.$
\end{rem}
       
We the following  property, to provide the converse of Remark~\ref{int2}.
 
     \begin{defn}\label{reversi} $ $
\begin{enumerate}
    \item[(i)]
   $(\mcA,\mcA_0)$ satisfies \textbf{reversibility} for $a$ if
     $b + a \succeq \zero$  implies $b \succeq a,$ $\forall b\in \mcA$.
     In this case, we  say that $a$ is \textbf{reversible}.
     \item[(ii)]
     We  say that $a$ is \textbf{power-reversible} if $a^n$ is reversible for each $n$.
     \item[(iii)]
     $(\mcA,\mcA_0)$ satisfies \textbf{tangible reversibility} if
      it satisfies reversibility for each  $a \in \tT$.     
\end{enumerate}
  \end{defn}
     
      \begin{defn}\label{reversine}
Suppose that $(\mcA,\mcA_0)$ has 
       a negation map $(-)$.
\begin{enumerate}
    \item[(i)]
    $(\mcA,\mcA_0)$ satisfies \textbf{$(-)$reversibility} for $a$ if
     $b (-) a \succeq \zero$  implies $b \succeq a,$ $\forall b\in \mcA$. In this case, we shall say that $a$ is \textbf{$(-)$-reversible}.
     \item[(ii)]
  We  say that $a$ is \textbf{$(-)$-power-reversible} if $a^n$ is $(-)$-reversible for each $n$.
  \item[(iii)]
 $(\mcA,\mcA_0)$ satisfies \textbf{tangible $(-)$-reversibility} if it satisfies $(-)$-reversibility for each  $a \in \tT$.  
\end{enumerate}       
    \end{defn}
     
     \begin{lem} Tangible reversibility  holds in the following settings: system, supertropical system, hypersystem, and symmetricized system.
     
      Tangible $(-)$-reversibility  holds in the following settings: classical system, supertropical system, and  hypersystem. 
     \end{lem}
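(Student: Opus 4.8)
The plan is to verify each claimed instance separately, checking the defining implications of reversibility (resp.~$(-)$-reversibility) against the known axioms of each structure. For tangible reversibility we must show: for $a\in\tT$, if $b+a\succeq\zero$ then $b\succeq a$. For tangible $(-)$-reversibility we must show: if $b(-)a\succeq\zero$ then $b\succeq a$. In both cases $\succeq$ is the relevant surpassing relation (typically $\preceq_0$, so $x\succeq\zero$ means $x\in\mcA_0$, equivalently $x$ is a quasi-zero in the systemic setting).

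First I would handle the systemic case, which is the conceptual core. If $b+a\succeq\zero$, i.e.\ $b+a\in\mcA^\circ$ (or more generally $b+a\in\mcA_0$), write $b+a = c(-)c$ for some $c$. Applying Lemma~\ref{nab} in the form $b\preceq_\zero b+a\cdot(\text{something})$ — more directly: from $b+a\succeq\zero$ we get $b \succeq (-)a$ by adding $(-)a$ to both sides and using that $(-)a + a \in\mcA^\circ$; but for ordinary (non-$(-)$) reversibility the statement really wants $b\succeq a$, so this case uses the feature that in a system $(-)a$ and $a$ are ``surpass-interchangeable'' in the relevant sense, which is exactly what holds when $\mcA_0$ is closed under the negation map and $b+a\in\mcA_0$ forces $b+((-)a)\in\mcA_0$ too via $a+((-)a)\in\mcA_0$ and condition (ii) of Definition~\ref{precedeq07}. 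For the supertropical case the negation map is the identity, so tangible reversibility and tangible $(-)$-reversibility coincide, and the statement reduces to: $b+a\in\mcA_0 = A^\nu$ for $a\in\tT$ implies $b\succeq a$; since $b+a = a^\nu$ forces $b = a$ or $b=a^\nu\succeq a$ by the trichotomy of supertropical addition, this is immediate. For the hypersystem case, $\mcA_0=\{S:\zero\in S\}$ and $\preceq=\subseteq$; $b\boxplus a \ni \zero$ for $a$ tangible means $\zero\in b\boxplus a$, which in a hyperfield forces $b = {-a}$, giving $b\subseteq(-a)$, hence $b\succeq(-a)$ for the $(-)$-version and (since in these examples $b$ tangible, $b=-a$) the claimed containment. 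For the symmetrized case one unwinds the doubling construction: an element surpasses $\zero$ iff both coordinates are equal, and one checks directly that $b+a$ having equal coordinates with $a$ tangible forces the appropriate inequality $b\succeq a$.

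The key steps in order: (1) fix conventions — recall $\succeq\zero$ means ``lies in $\mcA_0$'' for $\preceq_0$, and recall the negation map exists in each listed setting; (2) systemic case via Definition~\ref{precedeq07}(ii),(iii) and closure $(-)\mcA_0=\mcA_0$; (3) supertropical case via the supertropical trichotomy, noting $(-)=\id$ collapses the two notions; (4) hypersystem case via the defining property of a hyperfield that $\zero\in a\boxplus b$ determines $b$ uniquely as the hyper-negative of $a$, together with $\preceq\,=\,\subseteq$; (5) symmetrized case via the explicit description of $\mcA^\circ$ as the diagonal and a one-line coordinate computation. Finally, for the second sentence of the lemma I would note that ``classical system'' means an ordinary ring viewed as a system with $\mcA_0=\{0\}$ and honest negation, where $b-a = 0$ literally gives $b=a\succeq a$, so $(-)$-reversibility is trivial there (whereas plain reversibility fails, which is precisely why the classical case appears only in the second list).

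The main obstacle I expect is pinning down, in the systemic (non-$(-)$) case, exactly why $b+a\succeq\zero$ yields $b\succeq a$ rather than merely $b\succeq(-)a$; this is a genuine feature of the listed systems (it uses that for tangible $a$ one has $(-)a$ tangible and $a+((-)a)\in\mcA_0$, so $b+a\in\mcA_0$ transports to $b+((-)a) = b+a+((-)a)+((-)a) \in\mcA_0$ when $2$-torsion of $\mcA_0$ behaves well) and must be checked case-by-case rather than abstractly — indeed it is the reason hypersystems and symmetrized systems require a small computation, and the reason the classical ring only appears in the $(-)$-list. Everything else is a short verification against the known axioms and poses no difficulty.
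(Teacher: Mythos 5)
Your proposal takes essentially the same case-by-case approach as the paper's proof: fix a setting, unwind what $\succeq\zero$ and $\succeq a$ mean there, and verify the implication directly.  The paper's proof does exactly this (classical, supertropical, hypersystem, symmetrized), and you correctly identify the structural point that the classical case appears only in the $(-)$-list.

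Two small slips worth flagging, both in cases the paper handles a bit more carefully.  In the supertropical case you argue from ``$b+a=a^\nu$,'' but this misses the case where $b\in\mcA^\circ$ has strictly larger $\nu$-value than $a$: then $b+a=b\neq a^\nu$ yet still lies in $\mcA^\circ$.  The paper's one-liner covers precisely that case ($b\ne a$, $b\in\mcA^\circ$, $b+a=b$, so $b=a+b$ with $b\in\mcA^\circ$ and hence $b\succeq_0 a$).  In the hypersystem case you write that $\zero\in b\boxplus a$ ``forces $b=-a$'' and that ``$b\subseteq(-a)$'': since $b$ is a subset of the hypergroup and need not be a singleton, the correct statement is $-a\in b$, i.e., $\{-a\}\subseteq b$, hence $b\succeq(-)a$; for the $(-)$-version, $\zero\in b\boxplus(-a)$ gives $a\in b$ and $b\succeq a$, which is exactly what the paper says (``$a=a'\in b$'').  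Neither slip changes the strategy, which matches the paper's, but as written the supertropical verification has a gap and the hypersystem inclusion is reversed.
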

     \begin{proof} Suppose   $b (-) a \succeq \zero$.
     
     The classical case is obvious, since then $b-a = 0$ implies $a=b.$
     
     For the supertropical, 
     $(-) = +$; if $b \ne a$ and $b \in \mcA^\circ$ then 
     $b+a = b.$
     
     For a hypersystem built on a hypergroup $H$, $a\in H$ and  $a' - a = 0$ for some element $a'\in b$, so $a = a'\in b.$
     
     For a symmetricized system, let $b = (b_1,b_2)$ and $a = (a,0)$ Then $b +a =  (b_2,b_1)+ (a,0) = (b_2+a, b_1)$, so $b_1= b_2+a$ and $(a,0) + (b_1,b_2) = (b_1,b_2)$
     \end{proof}
     
     Clearly tangible $(-)$-reversibility implies power $(-)$-reversibility for each  $a \in \tT$. In this case we can define the \textbf{negated determinant}
  $\det(A)$ of a matrix $A=(c_{ij})$ %$A = \left(\begin{matrix} c_{11}   & c_{12} %& \dots &  c_{1n} \\ c_{21} & c_{22}    & \dots & c_{2n} \\ & &  %\ddots & \\  c_{n1} & c_{n2} & \dots &  c_{nn} 
%\end{matrix}\right)$ 
to be the following:
\begin{equation}
\det(A):=  \sum _{\pi\in S_n} ((-) \one)^{\sgn(\pi)} c _{i_1 \pi(i_1)}   c_{i_2 \pi(i_2)} \dots  c_{i_n \pi(i_n)}.   
\end{equation}

  \begin{prop}
  Suppose $(\mcA[y],\mcA_0[y])$ is  a  finite centralizing extension of a commutative  pair $(\mcA,\mcA_0)$, and $y$ is reversible or $(-)$-reversible.
  Then $y$ is integral over $(\mcA,\mcA_0)$.
  \end{prop}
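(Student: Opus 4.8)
The plan is to mimic the classical ``integral $\Leftrightarrow$ module-finite'' argument, extracting from the hypothesis a faithful finitely spanned module and then applying a determinant trick to produce a monic relation. First I would use that $(\mcA[y],\mcA_0[y])$ is a finite centralizing extension: fix a finite spanning set, which I may take to be $\one = v_1, v_2, \dots, v_n$ (after replacing an arbitrary spanning set by one containing $\one$), so that every element of $\mcA[y]$ is $\preceq$ a $\mcA$-combination of the $v_i$. In particular, since $y v_i \in \mcA[y]$ for each $i$, there are coefficients $c_{ij}\in\mcA$ with $\sum_j c_{ij} v_j \preceq y v_i$ for all $i$. Writing $A = (c_{ij})$ and $v = (v_1,\dots,v_n)^{t}$, this reads $A v \preceq y v$ componentwise, i.e. $(y I - A)$ ``annihilates'' $v$ in the surpassing sense.

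Next I would apply the adjoint/determinant identity available in this setting. Using the negated determinant $\det$ defined just above the proposition (when $y$ is $(-)$-reversible, so that a negation map is present) — or, in the reversible-without-negation case, the corresponding twist/bi-adjoint construction — one has an adjoint matrix $\operatorname{adj}(yI - A)$ satisfying $\operatorname{adj}(yI-A)(yI - A) \preceq \det(yI - A) I$ (the usual Cramer-type relation, which in tropical/systemic settings holds only up to $\preceq$, cf.\ the matrix theory of \cite{IRMatrices, Row21}). Multiplying the relation $(yI-A)v \succeq \zero$-type statement on the left by $\operatorname{adj}(yI - A)$ and chasing the inequalities yields $\det(yI - A)\, v_i \preceq \zero$ for every $i$; taking $i$ with $v_i = \one$ gives $\det(yI - A) \preceq \zero$, i.e. $\det(yI - A)\succeq\zero$ after using symmetry of $\preceq$ on the relevant pieces. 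Expanding $\det(yI - A) = y^n + (\text{lower-degree terms with coefficients in }\mcA)$ along the definition of the negated determinant, this says $y^n (-) \sum_{i<n} a_i y^i \succeq \zero$ for suitable $a_i\in\mcA$ (the signs being absorbed by the negation map).

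Finally I would invoke reversibility — this is exactly the hypothesis in the statement, and exactly why Definition~\ref{reversi}/\ref{reversine} was introduced (``to provide the converse of Remark~\ref{int2}''). Applying power-$(-)$-reversibility to $a = y^n$ (which holds because tangible $(-)$-reversibility implies power-$(-)$-reversibility, noting $y\in\tT_\mcW$, or directly from the reversibility hypothesis on $y$), the relation $\big(\sum_{i<n} a_i y^i\big) (-) y^n \succeq \zero$ upgrades to $\sum_{i<n} a_i y^i \succeq y^n$, which is precisely the definition of $y$ being integral over $(\mcA,\mcA_0)$ of degree $\le n$. The main obstacle I anticipate is the second step: the Cramer/adjoint identity in the semiring-pair setting holds only up to the surpassing relation, and one must be careful that the chain of $\preceq$-manipulations (multiplying an inequality by $\operatorname{adj}(yI-A)$, which has entries in $\mcA$ but possibly not in $\tT$) respects Definition~\ref{precedeq07}(ii)--(iii); establishing that $\det(yI-A)$ genuinely surpasses $\zero$ on $\one$, rather than merely on some non-unit $v_i$, is where the choice $v_1 = \one$ and the centralizing hypothesis are essential, and is the delicate point to get right.
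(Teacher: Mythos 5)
Your proposal matches the paper's proof almost step for step: finite spanning set giving $A v \preceq y v$, the negated adjoint/determinant identity to conclude $\det(yI-A)\succeq\zero$ (the paper applies $\det(A)\mcA[y]\succeq\zero$ to $\one\in\mcA[y]$ where you build $\one$ into the spanning set directly — same thing), and then reversibility to flip the leading term and extract the integral relation. You also correctly flag the only real subtlety, namely that the Cramer-type identity holds only up to $\preceq$ in this setting, which is precisely what the paper outsources to \cite[Definition~8.2]{Row21}.
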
 
  \begin{proof}
  Write $\mcA[y] \succeq \sum_{i=0}^n \mcA y_i.$ Then, for each $1\le i \le n,$
  $yy_i \succeq \sum c_{ij} y_i$ for suitable $c_{ij}\in\mcA,$ implying the matrix $$A = \left(\begin{matrix} c_{11}(-) y & c_{12} & \dots &  c_{1n} \\ c_{21} & c_{22}(-) y  & \dots & c_{2n} \\ & &  \ddots & \\  c_{n1} & c_{n2} & \dots &  c_{nn}(-) y
\end{matrix}\right)$$
satisfies $A v \succeq \zero,$ for $v$ the column vector $(y_1,\dots y_n),$ and letting $\adj{A}$ denote the (negated) adjoint matrix as in \cite[Definition~8.2]{Row21}, we have
$\det(A)v = \adj{A} A v \succeq \zero$. Hence $\det(A)y_i\succeq \zero$ for each $i$, and thus $$\det(A)\mcA[y]\succeq \sum _i \det(A)\mcA y_i \succeq \zero, $$ implying $\det(A) \succeq\zero.$ But opening $\det(A)$ and reversing $y^n$ if necessary
gives an integral relation for $y$ over $(\mcA,\mcA_0)$.
\end{proof}

      \begin{defn}
   Suppose $(\mcW,\mcW_0)$ is an extension of $(\mcA,\mcA_0)$.  An element $y\in \mcW$ is \textbf{algebraic over} $(\mcA,\mcA_0)$ if there
 are $a_i \in \mcA$ such that $ \sum_{i=0}^n a_i y^i \in \mcW_0$. The minimal such $n$ is called the \textbf{degree} of $y$. The $a_i$ are called the \textbf{coefficients} of $y$.
  \end{defn}
    
   Unfortunately, an algebraic element over a $\tT$-semifield pair need not be $\tT$-integral.
    Here is a special case where $\tT$-integrality holds.
   
   \begin{prop}\label{algint}$ $
  \begin{enumerate}\eroman
      \item Every  power-reversible or $(-)$-power-reversible  algebraic
   element over a $\tT$-semifield pair, with leading coefficient in $\tT,$ is $\tT$-integral.
   
   \item If $s   = \sum b_i a^i \in \tT$ and the pair $(\mcA, \mcA_0)$ is shallow, then we can take all the coefficients
   $b_i$ to be in $\tTz$.
  \end{enumerate}
   \end{prop}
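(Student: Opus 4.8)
The plan is to treat the two parts in sequence, building (ii) on top of the structural input supplied by shallowness. For part (i), suppose $y$ is algebraic over the $\tT$-semifield pair $(\mcA,\mcA_0)$ with minimal degree $n$, so that $\sum_{i=0}^n a_i y^i \in \mcW_0$ with $a_n \in \tT$. Since $\tT$ is a group in a semifield pair, $a_n^{-1}$ exists, and multiplying through by $a_n^{-1}\in\tT$ (which is allowed by the $\tT$-action compatibility and the closure of $\mcW_0$ under multiplication by tangibles, see Definition~\ref{precedeq07}(iii)) we may assume $a_n = \one$. This gives $y^n + \sum_{i=0}^{n-1}a_i y^i \in \mcW_0$, i.e.\ $\zero \preceq y^n + \sum_{i=0}^{n-1}a_i y^i$ by Definition~\ref{precedeq07}(i). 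Now the hypothesis that $y$ is power-reversible (or $(-)$-power-reversible) kicks in exactly as in Definition~\ref{reversi}(i)/Definition~\ref{reversine}(i): reversibility for $y^n$ converts the statement $b + y^n \succeq \zero$ (with $b = \sum_{i<n}a_i y^i$, after possibly regrouping) into $b \succeq y^n$, i.e.\ $\sum_{i=0}^{n-1}a_i y^i \succeq y^n$ — which is precisely an integral relation in the sense of the integrality definition, with all $a_i$ tangible since the original $a_i$ were and the normalization only multiplied by a tangible. The one subtlety is bookkeeping with signs in the $(-)$-reversible case: one writes $y^n (-) (\text{lower terms}) \succeq \zero$ using the negated expression and applies Definition~\ref{reversine}(i) verbatim; this mirrors the adjoint/determinant manipulation in the preceding Proposition and should go through without incident.

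For part (ii), assume additionally that $(\mcA,\mcA_0)$ is shallow, so $\mcA = \tT \cup \mcA_0$, and suppose $s = \sum_i b_i a^i \in \tT$ for some $a\in\tT$ (here I read $a$ as the relevant element of $\tT$ and $s$ as a tangible value of the polynomial $\sum b_i \lambda^i$ at $\lambda = a$). Each coefficient $b_i$ lies in $\mcA = \tT \cup \mcA_0$, so the only coefficients that are not already in $\tTz = \tT\cup\{\zero\}$ are those lying in $\mcA_0\setminus\tTz$. The goal is to replace such a $b_i$ by a tangible (or by $\zero$) without changing the value $s$, or at least without changing it in a way that matters for the integral relation. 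The mechanism: if $b_i \in \mcA_0$, then $b_i a^i \in \mcA_0$ (since $\mcA_0$ is closed under the $\tT$-action), so the term $b_i a^i$ is a ``quasi-zero–like'' contribution. Using $\preceq_0$ and Lemma~\ref{sh2} — which says in a shallow pair that $b \preceq_0 t$ with $t$ tangible forces $b = t$ — one argues that since $s\in\tT$, the partial sum $\sum_{i\in J}b_i a^i$ over the tangible-coefficient indices $J$ already surpasses, hence equals, $s$ up to an $\mcA_0$-summand; discarding (replacing by $\zero$) the $\mcA_0$-coefficient terms leaves a representation $s = \sum_{i\in J} b_i a^i$ with $b_i\in\tT$ for $i\in J$ and $b_i := \zero$ otherwise. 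One has to check that after discarding these terms the resulting expression is still genuinely equal to (not merely $\preceq$ or $\succeq$) $s$; this is where shallowness via Lemma~\ref{sh2} and the strong surpassing property it grants (also Lemma~\ref{sh2}'s second clause) are essential.

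The main obstacle I anticipate is in part (ii): carefully justifying that deleting the $\mcA_0$-coefficient monomials does not destroy the equality. In a general semiring, $x + y_0 = s$ with $y_0\in\mcA_0$ does \emph{not} force $x = s$; that implication needs exactly the strong surpassing relation, which Lemma~\ref{sh2} supplies only in the shallow case and only when the target is tangible. So the argument must be set up so that at each deletion step the running partial sum is tangible (or one deletes all $\mcA_0$-terms at once and invokes that $s\in\tT$ together with $s = (\sum_{i\in J}b_i a^i) + (\text{element of }\mcA_0)$ to conclude $\sum_{i\in J}b_i a^i = s$ via the strong surpassing property). A secondary, minor obstacle is confirming that the minimality/degree count is preserved or at worst decreased, so that one still has a bona fide integral relation; but since any valid (possibly non-minimal) integral relation with tangible coefficients suffices for the conclusion ``$y$ is $\tT$-integral,'' this is not serious. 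I would also double-check that in part (i) the reversibility hypothesis is applied to the correct power $y^n$ and that the leading-coefficient normalization in a semifield pair is legitimate — both are routine given the definitions already in place.
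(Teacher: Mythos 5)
Your proof follows essentially the same route as the paper's: for (i) you normalize by the (invertible) leading coefficient and apply (power-)reversibility to the leading term, and for (ii) you split $s$ into a tangible-coefficient part $s_1$ and an $\mcA_0$-coefficient part $s_2$, then invoke shallowness via Lemma~\ref{sh2} to conclude $s = s_1$ from $s_1 \preceq_0 s$ with $s$ tangible. The additional bookkeeping you flag (degree counts, which power reversibility is applied to) is harmless, and your overall argument is the paper's argument with more exposition.
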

   \begin{proof} (i)
 This is clear as we can divide through by the leading coefficient,
 and then apply reversibility to the leading term. 
   
   (ii) Write $s = s_1 + s_2,$ where $s_1 = \sum b_ia^i$ with $b _i \in \tT$ and 
   $s_2 = \sum b_i'a^i$ with $b' _i \in \mcA_0.$ Then $s\succeq s_1,$ implying $s=s_1$  since $s$ is tangible, cf. Lemma~\ref{sh2}.
   \end{proof}

\subsection{Hilbert Nullstellensatz}

\begin{definition} \label{nondeg} 
[Another version of algebraicity] $ $
\begin{enumerate}
    \item[(i)]
A polynomial $f$ is \textbf{tangible} if  $f = \sum a_{i} \lambda^i,$ with $a_i \in \tT$.

\item[(ii)] $ y\in \mcW$ is \textbf{transcendental} if for any polynomials
   $f_1, f_2$, if $f_1(y)\succeq f_2(y)$ then $f_1(b)\succeq f_2(b)$ for all $b \in \mcA;$ 
$ y\in \mcW$ is \textbf{congruence algebraic} over the pair $(\mcA, \mcA_0)$ if
$y$ is not transcendental.
 
\end{enumerate}
 \end{definition}
   \begin{rem}\label{tvp}
       If $(\mcA,\mcA_0)$
       is shallow and $\preceq$ -nondegenerate, then every tangible polynomial takes on a tangible value. Indeed for all $a\in \tT,$ if  $f(a)\notin \tT$  then $f(a) \in \mcA_0$, so $f\in \mcA_0[\La].$
       
       This can be coupled with Lemma~\ref{sh2}.
   \end{rem}

   \begin{prop}\label{sN}
   Suppose a shallow semiring pair  $(\mcA, \mcA_0)$ is  $\preceq_0$ -nondegenerate, and
  \  $(\mcW, \mcW_0)$  is a centralizing extension of $(\mcA,\mcA_0)$, with $y\in \tT_\mcW$ such that $(\mcA[y], \mcA_0[y])$ is tangibly separating. Let 
  $\tT' = \{ a y^i : a \in \tT,  i \in \Net\}.$
  Let $(\mcK, \mcK_0)$ be  the $\tT'$-semifield of fractions of  $(\mcA[y], \mcA_0[y])$. If  $(\mcK, \mcK_0)$  is affine, then
     $y$ is  congruence algebraic over the pair
  $(\mcA, \mcA_0)$.
   \end{prop}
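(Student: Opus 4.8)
The plan is to argue by contradiction: assume $y$ is transcendental over $(\mcA,\mcA_0)$ and derive a contradiction with the affineness of the semifield of fractions $(\mcK,\mcK_0)$. If $(\mcK,\mcK_0)$ is affine, then by definition there is a finitely generated centralizing extension $(\mcK',\mcK_0)$ of $(\mcA[y],\mcA_0[y])$ (or of $(\mcA,\mcA_0)$) with $\mcK' \preceq \mcK$; I would first unwind what this means: $\mcK$ is $(\preceq)$-spanned over $\mcA[y]$ (hence over $\mcA$, after clearing $y$-denominators) by finitely many elements, each a fraction $t^{-1}g(y)$ with $t\in\tT'$ and $g\in\mcA[y]$. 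The point of transcendence of $y$ (Definition~\ref{nondeg}(ii)) is that any $\preceq$-relation among polynomials in $y$ specializes: $f_1(y)\succeq f_2(y)$ forces $f_1(b)\succeq f_2(b)$ for all $b\in\mcA$. I would exploit this to ``evaluate'' the finite generating data at a tangible point of $\mcA$ and show the resulting relations are incompatible with $\mcK$ being a semifield containing $y^{-1}$ as a genuine new element.

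The key steps, in order, would be: (1) Record that since $(\mcA,\mcA_0)$ is shallow and $\preceq_0$-nondegenerate, every tangible polynomial takes a tangible value (Remark~\ref{tvp}), so there is an abundance of tangible evaluation points $b\in\tT$ at which $f(b)\in\tT$ for any prescribed finite set of tangible polynomials $f$. (2) From affineness, write the (finitely many) generators of $\mcK'$ over $\mcA[y]$; because $\tT'=\{ay^i\}$, the denominators are all powers of $y$ times tangibles, so after a uniform clearing we get elements of the form $y^{-N}g_j(y)$, $g_j\in\mcA[y]$, that $(\preceq)$-span $\mcK$ over $\mcA$. (3) Since $y^{-1}\in\mcK$, we may write $y^{-1}\preceq \sum_j c_j\, y^{-N}g_j(y)$ with $c_j\in\mcA$; multiplying through by $y^N$ (legitimate as $y$ is regular, being in $\tT_\mcW$) gives $y^{N-1}\preceq \sum_j c_j g_j(y)$, an honest $\preceq$-relation between two polynomials in $y$ with coefficients in $\mcA$. (4) Now invoke transcendence: this relation must hold after substituting any $b\in\mcA$ for $y$; choosing $b\in\tT$ tangible (using step (1) to keep the relevant values tangible, together with the tangibly-separating hypothesis on $(\mcA[y],\mcA_0[y])$ to control that the substitution does not collapse tangibles into $\mcA_0$), we get $b^{N-1}\preceq \sum_j c_j g_j(b)$ in $\mcA$. (5) Derive the contradiction: the relation $y^{N-1}\preceq \sum c_j g_j(y)$ combined with the corresponding lower-degree relations (from spanning $y^{N-2},\dots,1$ as well) amounts, via the twist-product / adjoint-matrix mechanism used in the integral-extension results, to $y$ satisfying a polynomial relation over $(\mcA,\mcA_0)$ — i.e.\ $y$ is congruence algebraic — contradicting the standing assumption that $y$ is transcendental.

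The main obstacle, I expect, is step (4): controlling the interaction between specialization and tangibility. Transcendence gives us that $\preceq$-relations specialize, but to land a contradiction we need the specialized relation to be nontrivial (not absorbed into $\mcA_0$), and this is exactly what the hypotheses ``$(\mcA,\mcA_0)$ shallow and $\preceq_0$-nondegenerate'' and ``$(\mcA[y],\mcA_0[y])$ tangibly separating'' are there to guarantee; threading them correctly — in particular arranging that the denominators $y^N$ remain tangible and invertible after specialization, and that $\sum c_j g_j(b)$ does not degenerate — is the delicate part. A secondary technical point is making precise the passage from ``$(\mcK,\mcK_0)$ affine'' (a statement about a finitely generated extension surpassing $\mcK$) to a concrete finite $(\preceq)$-spanning set with denominators in $\tT'$; this uses that $\mcK$ is the $\tT'$-semifield of fractions of $\mcA[y]$, so every element already has the form $t^{-1}g(y)$, and one only needs to bound the finitely many generators' denominators uniformly. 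Once those points are handled, the contradiction in step (5) follows the template of Proposition~\ref{algint} and Remark~\ref{int2}.
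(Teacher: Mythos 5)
Your raw materials are right — affineness gives finitely many generators with denominators in $\tT'$, and transcendence lets you specialize $\preceq$-relations between polynomials in $y$ — but step (5) does not close, and the reason is structural. The relation $y^{N-1}\preceq\sum_j c_j g_j(y)$ is perfectly compatible with $y$ being transcendental: transcendence does not say no such relation exists, it says every such relation specializes to all $b\in\mcA$. So specializing it at a tangible $b$ just gives a true statement $b^{N-1}\preceq\sum_j c_j g_j(b)$ in $\mcA$, with no tension anywhere. The reference to the adjoint-matrix / twist-product machinery of Proposition~\ref{algint} and Remark~\ref{int2} does not rescue this, since that machinery produces \emph{integrality}, a strictly stronger conclusion which Example~\ref{nonin} shows one cannot hope for here.

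The missing idea is \emph{which} element of $\mcK$ to express, and it is exactly here that the tangibly-separating hypothesis earns its keep. You use it ``defensively,'' to prevent tangibles from collapsing into $\mcA_0$ under specialization; the paper uses it ``offensively,'' to \emph{force} such a collapse. Concretely: write the common denominator as $s=g(y)$ with $g$ having tangible coefficients (via Proposition~\ref{algint}(ii)); by $\preceq_0$-nondegeneracy pick a tangible $a$ with $g(a)\in\tT$; then tangible separation of $(\mcA[y],\mcA_0[y])$ (applied to $y\ne a$) supplies $a'$ with $y+a'$ tangible — so $(y+a')^{-1}\in\mcK$ and can be written as $g(y)^{-k}f(y)$ — and, crucially, $a+a'\in\mcA_0$. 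Clearing denominators gives $g(y)^k\succeq (y+a')f(y)$, and \emph{this} relation, specialized at $b=a$ under the transcendence hypothesis, yields $g(a)^k\succeq(a+a')f(a)$ whose right side lies in $\mcA_0$ because $a+a'$ does; hence $g(a)^k\in\mcA_0$, contradicting $g(a)^k\in\tT$. So the contradiction comes not from the spanning relation for $y^{-1}$ but from the carefully engineered element $(y+a')^{-1}$, and without that choice your argument has no way to produce a failed specialization.
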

   \begin{proof}
 Write $(\mcK, \mcK_0)\succeq (\mcA [a_1, \dots, a_m], \mcK_0),$  with each $a_i\in \tT_\mcW.$ Write each $a_i = s^{-1} f_i(y),$
$1\le i \le t,$ where $s\in \tT_\mcW,$ cf. Theorem~\ref{OreP}(2).  Note that $\deg g >1,$ or else $\mcK = \mcA[y]$ and we are done.    Write $s = g(y),$ where by Proposition~\ref{algint} we may assume that  $g\in \mcA[\la]$ has coefficients in $\tT$. By $\preceq$-nondegeneracy we have $g(a)$ tangible for some $a\in \tT'.$ Since  $(\mcA[y], \mcA_0[y])$ is tangibly separating, we have $a'\in \tT'$ such that $y+a'\in \tT'$ and $a+a'\in \mcK_0.$
Hence $(y+a')^{-1}\in \mcK= {\tT'}^{-1}\mcA[y]$
so $(y+a')^{-1} \succeq f(y) $ for some
   $f \in \mcA[\lambda].$ Then
  $(y +a')^{-1}  \succeq  s^{-k}f(y) = g(y)^{-k}f(y)$,
  so $g(y)^k \succeq (y +a')f(y)$. If $y$ were transcendental then $g(a)^k \succeq (a +a')f(a)$, i.e., $g(a)^k \in \mcA_0,$
  contrary to $g(a)$ tangible. Hence $y$ is   congruence algebraic.
   \end{proof}
   
    Theorem~\ref{sN} could be viewed as a ``baby Nullstellensatz,'' and we would like the conclusion that $y$~is integral over the pair
  $(\mcA, \mcA_0)$. But there   are counterexamples.
  
  \begin{example}\label{nonin}\begin{enumerate}\eroman
      \item 
   Take any pair  $(\mcA, \mcA_0)$ and adjoin an additively absorbing element $\infty$ to $\tT$, in the sense that
   $\infty^n + a = \infty^n$ for all $a\in \tT$ and all $n.$ Then for any tangibly monic polynomial $f$ of degree $n$, $f(\infty) = \infty^n,$ implying the $\tT$-semifield  pair of fractions of $(\mcA[\infty],\mcA_0[\infty])$ is affine (generated by $\infty^{-1}$).
        \item Call $\mcA$ \textbf{weakly bipotent} if  $a,a' \in \tT$ implies $a+a' \in \{a, a'\}$ or $a^2 = (a')^2.$
        (This is implied by $(-)$-bipotence in systems, cf.~\cite{Row21}, and often is the case in tropical mathematics.) Then,
        taking $a' = a^2,$ either
        $a^2+a = a^2$ or  $a^2+a = a$ or 
         $a^2= a^4.$ But we get the reverse for $a^{-1}$ instead of $a$.  Thus for any such $\tT$  not satisfying the identity $x^2 = x^4,$ any invertible $a\in \tT$  is $\tT$-congruence algebraic, but not necessarily $\tT$-integral.
        \end{enumerate} 
  \end{example}
   
   These examples weaken the strength of the next result.
   
\begin{thm}\label{NullS}
Suppose $(\mcW,\mcW_0)$ is a commutative affine $\tT$-semifield pair over a   shallow, $\preceq$-nondegenerate semifield pair $(\mcA,\mcA_0)$, and also having the property that  $\tT$-{congruence algebraic} implies $\tT$-integral. Then $(\mcW,\mcW_0)$ is $\tT$-integral.
\end{thm}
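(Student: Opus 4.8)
The plan is to mimic the classical Noether normalization / Artin--Tate argument, reducing the integrality of an arbitrary element of $\mcW$ to the ``baby Nullstellensatz'' Theorem~\ref{sN}. Since $(\mcW,\mcW_0)$ is affine over $(\mcA,\mcA_0)$, there is a finitely generated centralizing extension $(\mcW',\mcW_0)$ with $\mcW' \preceq \mcW$, say generated by tangible elements $a_1,\dots,a_m \in \tT_{\mcW}$. To show every element of $\mcW$ is $\tT$-integral over $(\mcA,\mcA_0)$, it suffices (using Remark~\ref{int2} and the transitivity of integrality, which should be extracted along the way from the determinant/adjoint argument of Proposition preceding Definition~\ref{reversi}) to show each generator $a_i$ is $\tT$-integral over $(\mcA,\mcA_0)$. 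So fix $y = a_i \in \tT_{\mcW}$.

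First I would form the $\tT'$-semifield of fractions $(\mcK,\mcK_0)$ of $(\mcA[y],\mcA_0[y])$ with $\tT' = \{ay^j : a\in\tT,\ j\in\Net\}$, exactly as in Theorem~\ref{sN}. The subtlety is that $(\mcK,\mcK_0)$ sits inside the fraction semifield of the larger $(\mcW,\mcW_0)$; one checks $\mcK$ is itself affine over $(\mcA,\mcA_0)$ because it is a localization of a subalgebra of the affine $\mcW$ — here I would want to invoke, or prove a paired analogue of, the statement that a $\tT$-subsemifield of an affine $\tT$-semifield pair is affine (this is the place where Artin--Tate for pairs, Theorem~\ref{AT}, naturally enters: $\mcK$ is finite over a suitable affine subalgebra). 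Granting that $(\mcK,\mcK_0)$ is affine, and checking that $(\mcA[y],\mcA_0[y])$ is tangibly separating (which we may assume holds here, paralleling the hypothesis of Theorem~\ref{sN}; if it is not automatic, it must be added as a standing assumption or deduced from shallowness plus $\preceq$-nondegeneracy as in Remark~\ref{tvp}), Theorem~\ref{sN} yields that $y$ is congruence algebraic over $(\mcA,\mcA_0)$.

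Next, since $y = a_i \in \tT$, the congruence-algebraic relation $\sum_{k=0}^n a_k y^k \succeq \sum_{k} a'_k y^k$ can be arranged with coefficients in $\tT$: here I use shallowness together with Proposition~\ref{algint}(ii) to replace any coefficient lying in $\mcA_0$ by one in $\tTz$, so that $y$ is in fact $\tT$-congruence algebraic. Now the hypothesis ``$\tT$-congruence algebraic implies $\tT$-integral'' applies directly and gives that $y$ is $\tT$-integral over $(\mcA,\mcA_0)$. Doing this for each generator $a_1,\dots,a_m$ and then closing under the (transitive, finitely-generated) integral-extension operation — invoking Remark~\ref{int2} to see $(\mcA[a_1,\dots,a_m],\,\mcA_0[a_1,\dots,a_m])$ is a finite centralizing extension, hence every element of it is $\tT$-integral — shows $\mcW'$, and therefore $\mcW$ via $\mcW'\preceq\mcW$ together with the surpassing-relation compatibility of integral relations, is $\tT$-integral over $(\mcA,\mcA_0)$.

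The main obstacle I anticipate is the transition from ``each generator is $\tT$-integral'' to ``the whole extension is $\tT$-integral'': in the classical theory this is the transitivity of integral dependence, whose proof runs through the determinant trick, and in the paired setting one must be careful that the adjoint/determinant computation (as in the Proposition before Definition~\ref{reversi}) goes through with only a surpassing relation and no negation — this needs power-reversibility or $(-)$-power-reversibility of the relevant elements, which is not among the stated hypotheses and may have to be added, or absorbed into ``$\tT$-congruence algebraic implies $\tT$-integral.'' A secondary obstacle is justifying that the localization $(\mcK,\mcK_0)$ is affine; the cleanest route is probably to apply Theorem~\ref{AT} after exhibiting a finite $(\preceq)$-base, but verifying the base condition in the fraction pair requires the Ore machinery of Theorem~\ref{OreP} and some care with $\mcK_0$ versus $\mcA_0\mcK$.
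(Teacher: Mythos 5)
You have correctly identified all three load-bearing ingredients (Theorem~\ref{sN}, the hypothesis ``$\tT$-congruence algebraic implies $\tT$-integral,'' and Theorem~\ref{AT}), and you have also correctly identified the two places where the argument gets sticky --- transitivity, and why $(\mcK,\mcK_0)$ is affine. But the proposal as written leaves a genuine gap at the second point, and that gap is the actual heart of the theorem. You propose to show each generator $a_i$ is $\tT$-integral over $(\mcA,\mcA_0)$ separately, forming $\mcK_i$ = fractions of $\mcA[a_i]$ for each $i$ and invoking Artin--Tate. But Theorem~\ref{AT} needs $\mcW$ to have a \emph{finite} $(\preceq)$-base over $\mcK_i$, and there is no reason this should hold a priori: finiteness of $\mcW$ over $\mcK_i$ is logically \emph{equivalent} to $\mcW$ being integral over $\mcK_i$, which is a slightly smaller version of the very statement you are trying to prove. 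You flag this (``verifying the base condition in the fraction pair requires\ldots some care'') but do not supply the missing idea that resolves it.

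The missing idea is the Zariski-style \emph{induction on $n$}, the number of generators, which is what the paper does. Write $\mcW = \mcA[y_1,\dots,y_n]$, single out $y_1$, and set $\mcK$ to be the $\tT$-semifield of fractions of $(\mcA[y_1], \mcA[y_1]\cap\mcW_0)$. Regard $\mcW$ as generated by $y_2,\dots,y_n$ over $\mcK$; by the induction hypothesis $(\mcW,\mcW_0)$ is \emph{integral}, hence finite, over $(\mcK,\mcK_0)$. That finiteness is exactly the missing ``finite $(\preceq)$-base'' ingredient: now Theorem~\ref{AT} applies and shows $(\mcK,\mcK_0)$ is affine over $(\mcA,\mcA_0)$, Theorem~\ref{sN} shows $y_1$ is congruence algebraic, the blanket hypothesis upgrades this to $\tT$-integral, so $\mcK$ is finite over $(\mcA,\mcA_0)$, and then the integrality of $\mcW$ over $\mcA$ follows from $\mcW$ integral over $\mcK$ and $\mcK$ finite over $\mcA$. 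This also collapses your ``main obstacle'' about transitivity into a single application (finite-over-finite is finite, as opposed to iterating the determinant trick $n$ times over the generators). In short: reorganize your argument as a descending induction on the generator count rather than a parallel pass over each $a_i$, and the affineness of $\mcK$ is precisely what the inductive hypothesis buys you.
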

\begin{proof}
This can be easily proved by induction on $n, $ where $\mcW = \mcA[y_1, \dots, y_n].$

Write $\mcW \preceq \mcA [y_1]([y_2,\dots, y_n]),$
 and let $(\mcK, \mcK_0)$ be the $\tT$-semifield
of fractions of the pair  $ (\mcA[y_1],\mcA[y_1]\cap \mcW_0)$. By Theorem~\ref{AT},  $(\mcK, \mcK_0)$ is affine. Hence, by Theorem~\ref{sN}, $\mcK \preceq \mcA[y_1],$
and is finite over $(\mcA,\mcA_0)$. But by induction,
$(\mcW,\mcW_0)$ is integral over $(\mcK, \mcK_0)$, and thus over $(\mcA,\mcA_0)$.
\end{proof}

Unfortunately, from the standpoint of tropical mathematics,
 the property that  $\tT$-{algebraic} implies $\tT$-integral
 is not compatible with the tropical viewpoint, by 
 Example~\ref{nonin}. But it is all we have.
 
\section{Growth in semialgebras}

Growth in algebraic structures has been an active area of study in the last 30 years, cf.~\cite{KrL}, mostly for groups and algebras, although recently growth in semigroups and other algebraic structures has been investigated in \cite{BZ,Gr,Sh}. Some of the basic properties carry over to semialgebras, as we review here.

\begin{definition}
Let $f$ be a function from the set of algebraic pairs to natural numbers. We say that $f$ is \textbf{sub-additive} (resp.~\textbf{sub-multiplicative}) if the following holds: for any pairs  of objects $(A,B)$ and $(B,C),$  
\[
f(A,C) \le f(A,B)+f(B,C) \quad (\textrm{resp.~} f(A,C) \le f(A,B)f(B,C))
\]
when equality holds we say that $f$ is \textbf{additive} (resp.~\textbf{multiplicative}).
\end{definition}

\subsection{Growth in a pair}

 Let $[\mcM:\mcN]$ denote the minimum number of elements need to generate $\mcM$ over $\mcN$, called the \textbf{rank}. The rank is sub-multiplicative:  If $\mcN \subseteq \mcN'\subseteq \mcM$, then $[\mcM:\mcN] \leq [\mcM:\mcN'][\mcN':\mcN].$ In situations where equality holds (such as in the classical situation for modules over Artinian rings), one can develop 
 dimension theories.
 
In this subsection we assume that $(\mcW,\mcW_0)$ is an affine centralizing extension   over a $\tT$-semifield pair $(\mcA,\mcA_0)$. In the non-relative case we take $\mcW_0 = 0.$ 

Now, we introduce a notion of the growth rate. We take a generating set $\{ a_1, \dots, a_{t'}\}$ of $\mcW_0$ over~$\mcA$ (which is empty when $\mcW_0 = 0$), which we extend to a generating set $\{ a_1, \dots, a_{t}\}$ of $\mcW$ over $\mcA$; we put $V' = \sum_{i=1}^{t'}\mcA a_i$ and $V = \sum_{i=1}^{t} \mcA a_i$. Note that $V=V'$ when $\mcW_0 = 0.$
We have the filtration $\mcW_k = \sum_ {i=1}^k V^k$ (which is just $V^k$ if $1 \in V$) of~$\mcW$, $1 \le k <\infty,$ and ${\mcW_0}_k = \sum _{i=1}^k {V'}^k$. We define the following numbers:
\begin{equation}\label{eq: 12}
d_k:=[\mcW_k:\mcW_{k-1}+{\mcW_0}_k].    
\end{equation}

\begin{defn} 
The \textbf{growth rate} of $\mcW$ (with respect to $\mcW_0$) is the sequence $\{d_1,d_2,\dots\}$, where $d_k$ is as defined in \eqref{eq: 12}. 
\end{defn}

If we start with a different set of generators then we may get a different growth rate $\{d_1',d_2',\dots\},$ but they are \textbf{equivalent} in the sense that there are
numbers $m_1,m_2$ such that $d_k' \le  m_1d_{m_2k}$ and $d_k \le m_2 d_{m_1k}'$ for all $k.$ Since $\tTz{}$ $(\preceq)$-spans $\mcW$ we may take the generating set in $\tTz.$
  
\begin{remark}
The basic definitions do not involve subtraction, 
and thus many classical results go over directly to semialgebras, and then to semialgebra pairs, almost word for word. So we shall quote proofs of standard results.

\end{remark}
\begin{example}
 The basic examples are analogous to the ones found in any standard algebra text, such as \cite[Chapter 17]{Row08}.
 \begin{enumerate}
     \item The \textbf{free semialgebra} over a commutative semiring is the monoid semialgebra over the word monoid $X= \{x_1,\dots, x_t\}$ in $t$ letters. The words in $X$ are a base,
     so $d_k$ is the number of words of length $k,$ which is $t^k.$ Obviously this exponential growth rate is the largest possible growth rate for a semialgebra.
     
     \item When $\mcA$ is finitely spanned over $\mcA_0$,
     the $d_k$ are  bounded. If  $\mcA$ is not finitely spanned over $\mcA_0$, then $d_k\ge k.$
     
      \item The \textbf{free commutative semialgebra}
      is the polynomial semialgebra $\mcA [\lambda_1, \dots \lambda_t].$ When $1 \in \mcA,$ $d_k = \binom{k+t}{t}\le k^t,$ which has polynomial growth.
 \end{enumerate}
\end{example}
 
Here is a semialgebraic analog of a theorem of Jategaonkar (that any domain of subexponential growth is Ore). We say that a $\tT$-pair $(\mcA,\mcA_0)$
is a \textbf{$\tT$-semidomain pair} if every element of $\tT$ is regular over $\mcA_0$.

\begin{prop}\label{Jat} Any $\tT$-semidomain pair $(\mcA,\mcA_0)$ with subexponential growth has the property that for any $a_1,a_2\in \tT$ there are $b_1,b_2\in \mcA\setminus \mcA_0$ such that $b_1a_1+b_2a_2\in \mcA_0$.
\end{prop}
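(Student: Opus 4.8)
The plan is to adapt Jategaonkar's classical pigeonhole argument on words of bounded length to the present setting, exploiting subexponential growth to force a collision. Suppose for contradiction that $b_1 a_1 + b_2 a_2 \notin \mcA_0$ for all $b_1,b_2 \in \mcA\setminus\mcA_0$, equivalently that whenever $b_1 a_1 + b_2 a_2 \in \mcA_0$ we must have $b_1\in\mcA_0$ or $b_2\in\mcA_0$. First I would set $V$ to be a finite $\tT_\zero$-generating subspace of the relevant affine piece containing $\one, a_1, a_2$, and consider, for each word $w = c_1 c_2 \cdots c_k$ with each $c_j \in \{a_1, a_2\}$, the corresponding product in $\mcA$. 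There are $2^k$ such words. I would track these products inside the filtration piece $\mcW_k = V^k$, whose rank grows subexponentially by hypothesis, i.e. $\dim$-type count of a spanning set of $\mcW_k/(\text{something})$ is $o(2^k)$ on any exponential scale; more precisely, the number of generators needed grows slower than $c^k$ for every $c>1$.

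Next I would argue that the $2^k$ monomials $a_{i_1}\cdots a_{i_k}$ cannot all be ``$(\preceq)$-independent modulo $\mcA_0$'': since $\mcW_k$ is spanned over $\mcA$ by a set of size subexponential in $k$, for $k$ large two distinct words $u \ne u'$ of length $k$ must satisfy a relation of the form $\sum (\text{lower order}) \preceq (\text{prefix})\,(a_i - \text{stuff})$, and by peeling off common left factors using regularity of the $a_i$ (which holds because $(\mcA,\mcA_0)$ is a $\tT$-semidomain pair, so each $a_j$ is regular over $\mcA_0$) one reduces to a relation at the first place where $u$ and $u'$ diverge. At that spot one word starts with $a_1$ and the other with $a_2$, producing an expression $b_1 a_1 + b_2 a_2 \in \mcA_0$ with $b_1, b_2 \notin \mcA_0$ — contradicting the assumption. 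The regularity of tangibles is exactly what lets us cancel the common suffix/prefix without a negation map, using left $\preceq$-regularity from the \textbf{Ore}-style Definition of regular elements.

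The main obstacle I anticipate is making the ``pigeonhole forces a relation'' step precise in the $(\preceq)$-framework: classically one counts dimensions over a field, but here $\mcA$ need not be a semifield and $\preceq$-independence is only a one-sided surpassing condition, so I would need the subexponential growth hypothesis to be phrased in terms of the numbers $d_k$ (or the rank $[\mcW_k : \mcW_{k-1}]$) and then show that $2^k$ words of length $k$ that are all $\preceq$-independent modulo $\mcA_0$ would force $d_k$ to grow like $2^k$, i.e. exponentially. Concretely: if no nontrivial relation $\sum_{|w|=k} b_w\, w \preceq 0$ with the $b_w$ not all in $\mcA_0$ exists, then the $w$'s form part of a $(\preceq)$-base of $\mcW_k$ over $\mcA_0$, forcing $[\mcW_k:\mcW_0{}_k] \ge 2^k$, contradicting subexponential growth (Definition of growth rate and the equivalence of growth rates under change of generators). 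I would also need to handle the bookkeeping of left versus right cancellation carefully — whether to build words on the left or right — but since the hypothesis only asks for the symmetric conclusion $b_1 a_1 + b_2 a_2 \in \mcA_0$, either convention works, and I would pick whichever makes the cancellation via left $\preceq$-regularity cleanest.
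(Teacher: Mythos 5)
Your proposal follows essentially the same Jategaonkar-style argument the paper uses: subexponential growth forces a nontrivial polynomial relation $f(a_1,a_2)\in\mcA_0$ with tangible coefficients and no constant term, after which one reads off an expression $b_1a_1+b_2a_2\in\mcA_0$ with $b_1,b_2\notin\mcA_0$. Where you differ is in how the reduction from the relation to the desired two-term form is executed. The paper is more economical: it takes $f=g\lambda_1+h\lambda_2\in\tT[\lambda_1,\lambda_2]$ of minimal total degree and simply sets $b_1=g(a_1,a_2)$, $b_2=h(a_1,a_2)$; minimality alone forces $g(a_1,a_2),h(a_1,a_2)\notin\mcA_0$ (a smaller-degree subpolynomial landing in $\mcA_0$ would contradict the choice of $f$), and regularity of $a_1,a_2$ over $\mcA_0$ (the semidomain hypothesis) keeps the individual terms $g(a_1,a_2)a_1$ and $h(a_1,a_2)a_2$ outside $\mcA_0$. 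This sidesteps entirely the prefix/suffix cancellation and left-versus-right bookkeeping you anticipate having to manage. Conversely, your pigeonhole elaboration --- counting the $2^k$ length-$k$ words in $\{a_1,a_2\}$ against the subexponential growth of the $d_k$ --- is exactly the justification the paper leaves implicit for the existence of $f$ in the first place, and your flagged concern that $\preceq$-independence in this semiring setting does not constrain ranks the way linear independence over a field does is a real subtlety that the paper's terse proof glosses over as well; so your version is a useful expansion of the step the paper asserts, while the paper's minimal-degree trick is the cleaner route for the step you found awkward.
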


\begin{proof} Suppose $a_1, a_2\in \tT$.
Then $f(a_1,a_2)\in \mcA_0$ for some
polynomial $f = g \lambda_1+h\lambda_2\in \tT[\la_1,\la_2].$ Take such $f$ of minimal total degree. Then $\deg g, \deg h < \deg f,$ so
by hypothesis $g(a_1,a_2), h(a_1,a_2) \notin \mcA_0$,
and thus $g(a_1,a_2)a_1, h(a_1,a_2) a_2 \notin \mcA_0 $. Thus we can take
$b_1= g(a_1,a_2)$ and $b_2 =h(a_1,a_2). $
\end{proof}

The conclusion of the proposition could be interpreted as saying that $(\mcA,\mcA_0)$  satisfies the left Ore condition with respect to~$\tT$.

\begin{defn}
We define the \textbf{Hilbert Series} of $(\mcW, \mcW_0)$ to be $\sum _{k=1}^\infty d_k\lambda^k \in \mathbb{N}[\![\lambda ]\!].$\footnote{See \cite{EG} for an alternative definition.}
\end{defn}

This is completely analogous to the standard algebraic situation, and
can be viewed in terms of the \textbf{graded pair} $\oplus (\mcW_k, \mcA_{k-1}+{\mcA_0}_k).$ Thus growth, Gelfand-Kirillov dimension and Hilbert Series of semialgebras are closely related to semigroups, and  Example~\ref{Making} is relevant. Shneerson~\cite{Sh}, Smoktunowicz~\cite{Sm}, and Greenfeld~\cite{Gr} have interesting semigroup examples of varied growth.

\begin{ques} Suppose $\mcA$ is commutative and $\Net$-graded. Is the Hilbert series of a finitely spanned $\mcA$-module $\mcM$ (over $\mcA_0$) rational?
\end{ques}

\begin{lem}
When $\mcA_0=0,$ the Hilbert Series of the polynomial semialgebra $\mcA[\la_1,\dots, \la_t]$ is $\frac{1}{(1-\la)^t}$.
\end{lem}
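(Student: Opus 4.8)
The plan is to compute $d_k$ directly from the definition and recognize the generating series. Here $\mcA_0 = 0$, so $V' = 0$, $\mcW_0{}_k = 0$, and $V = \sum_{i=1}^t \mcA\lambda_i$, with $\one \in \mcA$. The filtration $\mcW_k = \sum_{i=1}^k V^i$ is then spanned by all monomials $\lambda_1^{m_1}\cdots\lambda_t^{m_t}$ of total degree at most $k$, and $\mcW_{k-1}$ is spanned by those of total degree at most $k-1$. Since the monomials form a base of the polynomial semialgebra $\mcA[\la_1,\dots,\la_t]$ (as noted in the excerpt's discussion of the free commutative semialgebra), a minimal generating set of $\mcW_k$ over $\mcW_{k-1}$ is given precisely by the monomials of total degree exactly $k$. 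Thus $d_k = [\mcW_k : \mcW_{k-1}] = \binom{k+t-1}{t-1}$, the number of degree-$k$ monomials in $t$ variables.

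Next I would identify $\sum_{k=1}^\infty d_k \la^k = \sum_{k=1}^\infty \binom{k+t-1}{t-1}\la^k$. Using the standard negative binomial expansion $\frac{1}{(1-\la)^t} = \sum_{k=0}^\infty \binom{k+t-1}{t-1}\la^k$, the claimed Hilbert series is $\frac{1}{(1-\la)^t}$, provided one adopts the convention that the sum runs from $k=0$ (the constant term $1$). I should remark that the definition in the excerpt writes $\sum_{k=1}^\infty$; with $d_0 = 1$ the series is $\frac{1}{(1-\la)^t}$, and with the indexing from $k=1$ it is $\frac{1}{(1-\la)^t} - 1$. I would simply note this off-by-one in the constant term and state the result in the clean form $\frac{1}{(1-\la)^t}$, consistent with the lemma as stated.

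The only genuine point requiring care — and the one I expect to be the main (mild) obstacle — is justifying that $[\mcW_k:\mcW_{k-1}]$ really equals the number of degree-$k$ monomials, i.e.\ that no smaller generating set exists. This follows because the monomials are a $(\preceq)$-base of $\mcA[\La]$ (equivalently, $\mcA[\La]$ is $(\preceq)$-free on the monomials), so the degree-$k$ monomials are $\preceq$-independent over $\mcA$, and any generating set of $\mcW_k$ over $\mcW_{k-1}$ must, after reducing modulo $\mcW_{k-1}$, $(\preceq)$-span the free $\mcA$-module on the degree-$k$ monomials — forcing at least that many generators. Everything else is the routine binomial identity, so the proof is short: establish $d_k = \binom{k+t-1}{t-1}$, then quote the generating-function identity for $(1-\la)^{-t}$.
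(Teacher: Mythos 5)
Your proof is correct and supplies exactly the argument the paper merely cites (the lemma's proof in the paper is a one-line reference to Rowen's textbook example). The route — compute $d_k$ directly as the number of degree-$k$ monomials, then quote the negative-binomial generating function — is the standard one and is certainly what the cited source does, so you are not diverging from the paper's approach; you are fleshing it out.

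Two small remarks. First, your observation that the definition of the Hilbert series starts the sum at $k=1$, so that the literal result is $\tfrac{1}{(1-\la)^t}-1$ rather than $\tfrac{1}{(1-\la)^t}$, is an accurate reading of the paper's definition; the lemma as stated requires either starting the filtration at $k=0$ with $d_0=1$ or tacitly adjoining the constant term, and it is worth flagging as you did. Second, there is an internal inconsistency in the paper that you do not need to resolve but might have noticed: the earlier example list asserts $d_k=\binom{k+t}{t}$ for the free commutative semialgebra, which is the count of monomials of degree \emph{at most} $k$; your value $d_k=\binom{k+t-1}{t-1}$, the count of monomials of degree \emph{exactly} $k$, is the one dictated by the definition $d_k=[\mcW_k:\mcW_{k-1}+{\mcW_0}_k]$ and is the one that makes the Hilbert series come out as $\tfrac{1}{(1-\la)^t}$ (up to the constant term). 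Your reliance on the monomials forming a $(\preceq)$-base, so that a minimal generating set of $\mcW_k$ over $\mcW_{k-1}$ cannot be smaller than the set of degree-$k$ monomials, is the right justification for the rank computation.
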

\begin{proof}
Just as in the classical case, cf.~\cite[Example~17.39]{Row08}.
\end{proof}

\begin{remark}
The referee has suggested that the Hilbert series of a tensor product is the product of the Hilbert series.
\end{remark}
 
Define the \textbf{Gelfand-Kirillov} dimension $\GKdim (\mcA,\mcA_0)$ of $(\mcA,\mcA_0)$
to be $$\limsup{\log _k [\mcA_k:{\mcA_0}_k]}.$$
\begin{lem} \begin{enumerate}
    \item 
$ \GKdim{(M_n(\mcA),\mcA)}=0.$ More generally,  $ \GKdim{(\mcA,\mcA_0)}=0$ whenever $\mcA$ is a finite extension of $\mcA_0$.

\item  $ \GKdim{(\mcA[\lambda_1,\dots, \lambda_n],\mcA[\lambda_1,\dots, \lambda_n]_0)}= \GKdim{(\mcA,\mcA_0)}+n$, for any $n$.
\end{enumerate}
\end{lem}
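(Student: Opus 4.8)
The plan is to prove the two parts essentially by mimicking the standard ring-theoretic arguments (cf.\ \cite{KrL}), taking care that only addition and multiplication — never subtraction — enter. For part (1), first I would observe that when $\mcA$ is a finite extension of $\mcA_0$, the spanning set $\{a_1,\dots,a_t\}$ of $\mcW=\mcA$ over $\mcA_0$ can be taken finite, so that the filtration pieces $\mcA_k = \sum_{i=1}^k V^k$ satisfy $\mcA_k \subseteq \mcA_0 + V$ for all $k$ large (indeed $V$ already contains a $(\preceq)$-spanning set of $\mcA$ over $\mcA_0$, so $\mcA_k$ stabilizes as a $(\preceq)$-span). Hence $[\mcA_k:{\mcA_0}_k]$ is bounded by a constant independent of $k$, which forces $\limsup \log_k [\mcA_k:{\mcA_0}_k] = 0$ since $\log_k(c)\to 0$. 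The case $\mcW = M_n(\mcA)$ over $\mcA$ is the special instance where $M_n(\mcA)$ is spanned over $\mcA$ by the $n^2$ matrix units $e_{ij}$, a finite set, so the same boundedness argument applies verbatim.

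For part (2), I would compare the growth rate of $(\mcA[\lambda_1,\dots,\lambda_n], \mcA[\lambda_1,\dots,\lambda_n]_0)$ with that of $(\mcA,\mcA_0)$. Choosing a generating set of $\mcA$ over $\mcA_0$ and adjoining $\lambda_1,\dots,\lambda_n$ gives a generating set of the polynomial pair; the degree-$k$ filtration piece $\mcW_k$ is then $(\preceq)$-spanned by products of the form $(\text{monomial in the }\lambda_j\text{ of degree }\le k)\cdot(\text{element of }\mcA_{k})$. Since there are on the order of $k^n$ monomials of degree $\le k$ in $n$ variables (cf.\ the free commutative semialgebra computation, $\binom{k+n}{n}\le k^n$), one gets the estimate
\begin{equation*}
[\mcA_{k}:{\mcA_0}_{k}]\cdot c_1 k^{n} \;\ge\; [\mcW_k : {\mcW_0}_k] \;\ge\; [\mcA_{\lfloor k/2\rfloor}:{\mcA_0}_{\lfloor k/2\rfloor}] \cdot c_2 k^{n}
\end{equation*}
for suitable constants $c_1, c_2$, using that the $\lambda_j$ are central and $(\preceq)$-free over $\mcA$. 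Taking $\log_k$ of both sides and using $\log_k(c\,k^n) = n + \log_k(c) \to n$ and $\log_k[\mcA_{\lfloor k/2\rfloor}:\cdots] \to \GKdim(\mcA,\mcA_0)$ (the shift by a factor of $2$ disappears in the $\limsup$), the two outer bounds converge to $\GKdim(\mcA,\mcA_0)+n$, pinching the middle.

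The main obstacle I anticipate is making the $(\preceq)$-spanning bookkeeping rigorous: unlike in the classical case, $\mcW_k$ is not literally generated but only $(\preceq)$-spanned by monomials times elements of $\mcA_k$, and the rank $[\,\cdot:\cdot\,]$ counts minimal $(\preceq)$-spanning sets, so I need to check that the product $(\preceq)$-spanning estimate interacts correctly with the definition \eqref{eq: 12} of $d_k$ and with the equivalence of growth rates under change of generators. This is where the hypothesis that $\{\lambda_j\}$ is central (so monomials commute past $\mcA$) and $(\preceq)$-independent over $\mcA$ (so no collapse occurs in the lower bound) is essential. Once the spanning-set manipulations are pinned down, everything else is the routine $\limsup$ arithmetic quoted from \cite[Chapter~17]{Row08} or \cite{KrL}.
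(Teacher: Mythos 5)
Your proposal is correct and takes essentially the same approach as the paper, which gives only a one-line proof for each part (``the dimensions are bounded'' for (i), and ``the same proof as for the classical case, cf.~\cite[Example~17.47]{Row08}'' for (ii)). You supply the details the paper leaves to the reader, correctly pinning down the $(\preceq)$-spanning bookkeeping and identifying centrality and $(\preceq)$-freeness of the $\lambda_j$ as the hypotheses that make the classical sandwich estimate and the $\limsup$ arithmetic go through.
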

\begin{proof}
(i) The dimensions are bounded.
(ii) The same proof as for the classical case,
cf.~\cite[Example~17.47]{Row08}, for example.
\end{proof}
\begin{ques} For $(\mcA,\mcA_0)$ commutative,  is $\GKdim (\mcA,\mcA_0)$ always an integer, and does it equal the Krull dimension?
\end{ques}
 
 \section{Appendix A: Roots  of a polynomial}
 
 Since algebra often serves as a tool for geometry, we use this appendix to lay out the geometric concepts arising from pairs.

\begin{defn}
A $\preceq$-\textbf{root} of a polynomial $ f(\la_1,\dots,\la_m)$
is a tuple $(b_1, \dots, b_m) \in \mcA^{(m)}$ such that $f(b_1, \dots, b_m)\in \mcA_0.$
\end{defn}.

This definition is natural, consistent with \cite{IR,IRMatrices}. Then for $Z \subset \mcA^{(n)}$ one would take $\La = \{\la_1, \dots, \la_n\}$ and
 $\mathcal{I}(Z) = \{f \in \mcA[\La] : f(z) \in \mcA_0, \ \forall z \in Z.$ 
 
 The difficulty in tying this in to algebra is that $\mathcal{I}(Z) $ is an ideal of $\mcA[\La]$, not a congruence.
  
  Accordingly we take instead points of $\mcA^{(n)}\times \mcA^{(n)}$ and for $(z_1,z_2) \in  \mcA^{(n)}\times \mcA^{(n)}$
  we define the \textbf{twist substitution}
  $$(f_1,f_2)\tw (z_1,z_2) = (f_1(z_1)+ f_2(z_2),  (f_1(z_2)+ f_2(z_1)).$$
  
  \begin{lem}\label{twis}
      $((f_1,f_2)\mtw (f_3,f_4))\tw (z_1,z_2) =  (f_1,f_2)\tw ((f_3,f_4)\tw (z_1,z_2))$
  \end{lem}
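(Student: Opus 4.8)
The statement is a compatibility between the twist product $\mtw$ on pairs of polynomials and the twist substitution $\tw$ at a point $(z_1,z_2)\in\mcA^{(n)}\times\mcA^{(n)}$; structurally it is the exact analogue of the associativity of the twist product (Lemma~\ref{twass}), with the last factor being a ``point'' rather than a polynomial pair. The plan is to verify it by a direct computation, expanding both sides and matching coordinates, using only that evaluation $f\mapsto f(z)$ is a semiring homomorphism $\mcA[\La]\to\mcA$ for each fixed $z$, i.e. $(f_1+f_2)(z)=f_1(z)+f_2(z)$ and $(f_1f_2)(z)=f_1(z)f_2(z)$.

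First I would write out $(f_1,f_2)\mtw(f_3,f_4) = (f_1f_3+f_2f_4,\ f_1f_4+f_2f_3)$ and then apply $\tw (z_1,z_2)$ to this pair, obtaining the first coordinate
\[
(f_1f_3+f_2f_4)(z_1) + (f_1f_4+f_2f_3)(z_2)
= f_1(z_1)f_3(z_1)+f_2(z_1)f_4(z_1)+f_1(z_2)f_4(z_2)+f_2(z_2)f_3(z_2),
\]
and the analogous expression for the second coordinate with $z_1,z_2$ swapped in the outer slots. On the other side, $(f_3,f_4)\tw(z_1,z_2) = (f_3(z_1)+f_4(z_2),\ f_3(z_2)+f_4(z_1))$ is an element of $\mcA\times\mcA$, and then $(f_1,f_2)\tw(-)$ applied to it — here I would read $\tw$ on a pair of scalars as substitution of constants, i.e. $(f_1,f_2)\tw(c_1,c_2) = (f_1(c_1)+f_2(c_2),\ f_1(c_2)+f_2(c_1))$, but since $f_1,f_2$ are being evaluated and the result should be the same bilinear expression, I would instead interpret the right-hand side as $f_1$ and $f_2$ evaluated and multiplied against the two scalar entries; in either reading the first coordinate expands to $f_1(z_1)f_3(z_1)+f_1(z_2)f_4(z_2)+f_2(z_2)f_3(z_2)+f_2(z_1)f_4(z_1)$, which matches the left-hand side term by term, and similarly for the second coordinate by the same left-right symmetry invoked at the end of Lemma~\ref{twass}.

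The only real subtlety — and the step I expect to need the most care — is pinning down precisely what $\tw$ means when its first argument is a pair of polynomials and its second a pair of already-evaluated scalars (or, equivalently, how substitution interacts with the two ``twisted'' slots $z_1,z_2$ simultaneously); once the bookkeeping convention is fixed so that $f_i$ gets evaluated at the entries of the incoming pair in the twisted pattern, the identity is a formal consequence of distributivity and associativity of multiplication in $\mcA$, exactly as in Lemma~\ref{twass}. I would therefore present the proof as a short displayed computation of the first coordinate of each side, remark that the second coordinates agree by the symmetry $z_1\leftrightarrow z_2$ together with the $f_1\leftrightarrow f_2$, $f_3\leftrightarrow f_4$ symmetry already used for $\mtw$, and conclude.
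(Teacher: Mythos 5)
Your computation of the left-hand side is correct, and you are right to flag as \emph{the} central subtlety the question of what $(f_1,f_2)\tw(c_1,c_2)$ means when $(c_1,c_2)$ is a pair of already-evaluated scalars. However, the resolution you offer does not close the gap — in fact it papers over a genuine problem. You assert that ``in either reading the first coordinate expands to $f_1(z_1)f_3(z_1)+f_1(z_2)f_4(z_2)+f_2(z_2)f_3(z_2)+f_2(z_1)f_4(z_1)$,'' but this is not what the first (substitution-of-constants) reading produces. Under that reading the first coordinate of the right-hand side is $f_1\bigl(f_3(z_1)+f_4(z_2)\bigr)+f_2\bigl(f_3(z_2)+f_4(z_1)\bigr)$, a \emph{composition}, not a sum of products of evaluations, and composition does not commute with polynomial multiplication. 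Concretely, take $n=1$, $\mcA=\mathbb{N}$, $f_1=f_2=f_3=f_4=\la$, $z_1=1$, $z_2=2$: the left-hand side evaluates to $(10,10)$ while the right-hand side under the substitution reading evaluates to $(6,6)$. Your second reading (``$f_1$ and $f_2$ evaluated and multiplied against the two scalar entries'') is never pinned down to a precise formula, and the most natural candidate $\bigl(f_1(z_1)c_1+f_2(z_2)c_2,\;f_1(z_2)c_1+f_2(z_1)c_2\bigr)$ produces cross-terms such as $f_1(z_1)f_4(z_2)$ that do not occur on the left. So the asserted term-by-term match does not in fact hold under either of the readings you name; the identity is \emph{not} ``a formal consequence of distributivity and associativity'' the way Lemma~\ref{twass} is, precisely because the operation being iterated is evaluation rather than multiplication, and evaluation is not associative with multiplication in the required mixed sense.

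For comparison, the paper's proof takes a different route: it invokes distributivity (legitimately, since evaluation is a semiring homomorphism for each fixed point) to reduce to the case where $f_1,\dots,f_4$ are monomials $\La^{k_i}$, and then attempts a direct exponent computation. But the monomial reduction does not dissolve the type-mismatch you identified — the inner $(f_3,f_4)\tw(z_1,z_2)$ is still a pair of sums of monomial values, and applying a monomial to a sum is where the argument would need either linearity of $f_1,f_2$ or a Freshman's-dream identity neither of which is available in general. (The paper's displayed exponents $k_1k_3+k_2k_4$ etc.\ also appear to be mistyped: $\La^{k_1}\La^{k_3}=\La^{k_1+k_3}$, not $\La^{k_1k_3}$.) So neither your direct expansion nor the monomial route, as written, actually closes the case; both founder on the same point, which is that the nested $\tw$ on the right-hand side needs a definition that is genuinely compatible with the twist product before any computation can verify the claimed associativity. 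If you want to salvage the argument, the honest first step is to \emph{define} the mixed operation on $\bigl(\mcA[\La]\times\mcA[\La]\bigr)\times\bigl(\mcA\times\mcA\bigr)$ in a way that agrees with the twist substitution when the second argument is a point pair, and to check that this definition makes the identity hold — rather than expanding both sides under an unresolved notational convention and declaring them equal.
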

  \begin{proof}
  By distributivity we may assume that our polynomials are monomials $f_i = \La^{k_i}$ for $1\le i \le 4$.
  Then \begin{equation}\begin{aligned}
((f_1,f_2)\mtw (f_3,f_4))\tw (z_1,z_2) & = (\La ^{k_1k_3+k_2k_4}, (\La ^{k_1k_4+k_2k_3}) (z_1,z_2) \\ & = z_1 ^{k_1k_3+k_2k_4}+ z_2 ^{k_1k_4+k_2k_3},   z_1 ^{k_1k_4+k_2k_3} + z_2 ^{k_1k_3+k_2k_4}  \\ &  = (\La ^{k_1},\La ^{k_2})\tw (  z_1 ^{k_3} + z_2 ^{k_4} , z_1 ^{k_4} + z_2 ^{k_3})
 \\ &  (f_1,f_2)\tw ((f_3,f_4)\tw (z_1,z_2)).
       \end{aligned}  
  \end{equation}
  \end{proof}

  \begin{defn}
  The \textbf{congruence} $\widehat \mcI(S)$ of $S\subseteq \mcA^{(n)}\times \mcA^{(n)}$ then is
  $$(f_1,f_2) \in  \mcA[\La]\times  \mcA[\La]:
  (f_1,f_2)(z_1,z_2) \in \mcA_0 \times \mcA_0, \forall (z_1,z_2)\in S.$$
  
  The congruence of a point is called a \textbf{geometric congruence}.
  \end{defn}

  Lemma~\ref{twis} implies that   $\widehat \mcI(S)$ is a radical congruence, clearly the intersection of a nonempty set of geometric congruences.

  For example for a system with unique negation, if $S = \{ (a,\zero, \dots, \zero),(\zero))\} $
  for $a\in \tT$, then  one has $\widehat \mcI(S) = (\la_1 (-) a+ \mcA_0[\La], a' + \mcA_0[\La])$, a prime congruence.
 
 This opens the door to the Zariski topology on pairs, but pursuing this path is out of the scope of this paper.
 
\section*{Acknowledgement}
\small
The research of the second author is sponsored by Louisiana Board of Regents Targeted Enhancement Grant 090ENH-21. The research of the third author was supported by the ISF grant 1994/20.

\small{The authors would like to thank Sergey Sergeev as well as the referee for helpful suggestions in clarifying the results.}


\begin{thebibliography}{000}
 

\bibitem{AGG}
M.~Akian,   S.~Gaubert, and A.~Guterman,
\emph{Linear independence over tropical semirings and beyond},
Tropical and  Idempotent Mathematics, G.L. Litvinov and S.N. Sergeev (eds). Contemp. Math.  495:1--38, (2009).

\bibitem{AGR}  M.~Akian, S. Gaubert, and L.~Rowen,
\emph{Linear algebra over systems}, Preprint (2022).

\bibitem{AGR2}  M.~Akian, S. Gaubert, and L.~Rowen,
\emph{From systems to hyperfields and related examples}, Preprint (2022).

\bibitem{AA} F. Alarc\'on and D. Anderson, 
\emph{Commutative semirings and their lattices of ideals}, Houston Journal of Mathematics, Volume 20, No. 4, 1994


\bibitem{BB1} M.~Baker and N.~Bowler,
\emph{Matroids  over  partial  hyperstructures},
Advances in Mathematics 343, 821--863, (2019).


\bibitem{BZ}  J.~Bell and E.~Zelmanov,  \emph{On the growth of algebras, semigroups, and hereditary languages},
Inventiones mathematicae volume 224, 683-–697 (2021).
 
\bibitem{CC}  A.~Connes and C.~Consani,  \emph{From monoids to hyperstructures: in search of an absolute arithmetic},
Casimir Force, Casimir Operators and the Riemann Hypothesis, de Gruyter, 147--198 (2010).

\bibitem{CGR} A. Chapman, L.~Gatto, and L.~Rowen, Clifford semialgebras, 
 Rendiconti del Circolo Matematico di Palermo Series 2
DOI 10.1007/s12215-022-00719-w (2022).
 
%\bibitem{Cos}
%A.A.~Costa.\emph{Sur la  th\^{e}orie g\'{e}n\'{e}rale des demi-anneaux},Publ. Math. Decebren 10:14--29, (1963).

\bibitem{Cos}
A.A.~Costa. \emph{Sur la  th\^{e}orie g\'{e}n\'{e}rale des
demi-anneaux}, Publ. Math. Decebren \textbf{10}, 14--29, (1963).
 
 \bibitem{Dr} A.~Dress, \emph{Duality theory for finite and infinite matroids with coefficients}, Advances in Mathematics 93(2), 214--250 (1986).
 
\bibitem{DW} A.~Dress and W.~Wenzel, \emph{Algebraic, tropical, and fuzzy geometry}, Beitrage zur Algebra und Geometrie/ Contributions to Algebra und Geometry 52 (2) 431--461, (2011).


%\bibitem{EGNO} P.~Etingof, S.~Gelaki, D.~Nikshych, and  V.~Ostrik,\emph{Tensor Categories} Mathematical Surveys and Monographs Volume 205 American Mathematical Society.
%
%\bibitem{FM} R.~Freese and R.~McKenzie, Commutator theory for Congruence Modular
%Varieties, London Mathematical Society Lecture Notes   125, 1987.

\bibitem{EG} N.~Elizarov and D.~Grigoriev, \emph{A tropical version of Hilbert polynomial (in dimension one)},  	arXiv:2111.14742 (2021).
 
\bibitem{GaR} L.~Gatto, and L.~Rowen,
\emph{Grassman semialgebras and the Cayley-Hamilton theorem},  Proceedings of the American Mathematical Society, series B, \textbf{7}, 183–-201 arXiv:1803.08093v1 (2020).


\bibitem{Ga} S.~Gaubert,
\emph{Th\'{e}orie des syst\`{e}mes lin\'{e}aires dans les diodes.}
Th\`{e}se, \'{E}cole des Mines de Paris, (1992).

\bibitem{GaP} S.~Gaubert \emph{Methods and applications of (max,+) linear algebra}, STACS' 97, number 1200 in LNCS, L\"{u}beck, Springer (1997).

\bibitem{GJL} J.~Giansiracusa, J.~Jun, and O.~Lorscheid,
\emph{On the relation between hyperrings and fuzzy rings},
Beitr. Algebra Geom. \textbf{58}, 735–764 (2017),  DOI 10.1007/s13366-017-0347-5.

\bibitem{golan92} J.~Golan.
\emph{The theory of semirings with applications in mathematics and  theoretical computer science}, Longman Sci \&\ Tech., \textbf{54}, (1992).
 
\bibitem{Gr} B.~Greenfeld,
\emph{Growth of monomial algebras, simple rings and free subalgebras},
 Journal of Algebra, \textbf{489}, 427-434 (2017)

 
%\bibitem{Gr} A.~Grothendieck,\emph{Produits tensoriels topologiques et espaces nucleaires},Memoirs of the Amer. Math. Soc., no. 16, (1955).

%\bibitem{Ha} S.~Hampe,
%\emph{Tropical linear spaces and tropical convexity},The Electronic Journal of Combinatorics 22 (4), P4.44, (2015).
 
 \bibitem{HH}
J.~Hilgert and K.~Hofmann, {\em Semigroups in Lie groups,
semialgebras in Lie algebras}, Trans. Amer. Math. Soc.
 288(2), (1985).
%
\bibitem{zur05TropicalAlgebra} Z.~Izhakian,
\emph{Tropical arithmetic and matrix algebra},
Commun. in Algebra,  37(4): 1445--1468, (2009).
%
%\bibitem{IKR1}
% Z.~Izhakian, M.~Knebusch, and L.~Rowen.
%\newblock Supertropical semirings and supervaluations.
% \newblock  \emph{J. Pure and Appl.~Alg.}, 215(10):2431--2463, 2011.
%

%\bibitem{IzhakianKnebuschRowen2009Refined}
%Z.~Izhakian, M.~Knebusch, and L.~Rowen.
%\newblock Layered tropical mathematics,  {\em Journal of Algebra}, to appear.
%\newblock (Preprint at arXiv:0912.1398.)
%%%
%
%\bibitem{IKR4}   (with  Z. Izhakian and  M. Knebusch), Categories of
%layered semirings, Commun. in Algebra, 2015.
%
%
%\bibitem{IKR5} Z.~Izhakian, M.~Knebusch, and L.~Rowen.
%\newblock {Minimal orders and  quadratic forms   on a free module  over a supertropical
%semiring}

%\bibitem{IJK}Z.~Izhakian, M.~Johnson, and M.~Kambites,\emph{Pure dimension and projectivity of tropical polytopes}, Advances in Mathematics, Vol. 303, 1236 --1263, (2016).
 

\bibitem{IR}
Z.~Izhakian, and L.~Rowen.
\emph{Supertropical algebra},
{ Advances in Mathematics}, \textbf{225(4)}, 2222--2286, (2010).

\bibitem{IRMatrices}
Z.~Izhakian and L.~Rowen.   \newblock  {Supertropical matrix
algebra.} \newblock  {\em Israel J. of Math.}, \textbf{182(1)}, 383--424, (2011).
%
%%
%\bibitem{IR1}
%Z.~Izhakian, and L.~Rowen,
%\emph {Supertropical matrix algebra},
%Israel J. Math.,  182(1), 383--424 (2011).

\bibitem{Jac}
N.~Jacobson.
\emph{Basic algebra II}, Freeman, (1980).

\bibitem{JM} D.~Jo\'o, and K.~Mincheva. \emph{Prime congruences of additively idempotent semirings
and a Nullstellensatz for tropical polynomials},
{ Selecta Mathematica}, \textbf{24(3)}, 2207-2233, (2018).

\bibitem{JMR} J.~Jun, K.~Mincheva, and L.~Rowen, \emph{Projective systemic modules}, Journal of Pure and Applied Algebra \textbf{224 (5)}, 106--243, (2020).
 

%\bibitem{JMT} J.~Jun, K.~Mincheva, and J.~Tolliver, \emph{Vector bundles on tropical schemes}, arXiv:2009.03030 (2020).





\bibitem{Ju}  J.~Jun, \emph{Algebraic geometry over hyperrings} , Advances in Mathematics, 323 , 142--192, (2018).
 

\bibitem{JuR1}  J.~Jun, and L.~Rowen,
\emph{Categories with negation},  in Categorical, Homological and
Combinatorial Methods in Algebra  (AMS Special Session in honor of S.K. Jain's 80th birthday), Contemporary Mathematics \textbf{751}, 221-270, (2020).



\bibitem{Ka1} Y.~Katsov,
\emph{Tensor products  of functors},
Siberian J. Math. 19 (1978), 222-229, trans. from Sirbiskii Mathematischekii Zhurnal 19 no. 2 , 318--327 (1978).

 
\bibitem{krasner}
 {M. Krasner},
 {A class of hyperrings and hyperfields},
{Internat. J. Math. \& Math. Sci.},
\textbf{6(2)}, {307--312} (1983)

\bibitem{KrL}
Krause, G.R., and Lenagan, T.H., \emph{Growth of Algebras and
Gelfand-Kirillov Dimension}, Amer. Math. Soc. Graduate Studies in
Mathematics \textbf{22} (2000).

\bibitem{Lj} Ljapin, E.S.,   \emph{Semigroups}, AMS Translations of Mathematical Monographs Vol 3
(1963) 519 pp.  

\bibitem{Lor1} O.~Lorscheid,
\emph{The geometry of blueprints Part I}, Advances in Mathematics 229, no. 3, 1804-1846, (2012).

\bibitem{Lor2} O.~Lorscheid,
\emph{A blueprinted view on $\mathbb{F}_1$-geometry},
Absolute Arithmetic and $\mathbb{F}_1$-geometry (edited by Koen Thas). European Mathematical Society Publishing House, (2016).


%\bibitem{Mac1} S.~MacLane,\emph{Natural associativity and commutativity},Rice Univ. Studies 49, no. 4, 28--46 (1963).

%\bibitem{Mac2} S.~MacLane.\emph{Categories for the working mathematician}, AMS Graduate texts in mathematics 5 (1998).

%\bibitem{Mac} A.W.~Macpherson, Projective modules over polyhedral
%semirings (2015),
% arXiv:1507.07213v1.

%\bibitem{Par}  B.~Parker, Exploded manifolds, Adv. Math 229, 3256-3319, (2012).
 

%
%\bibitem{Pl} M.~Plus (M.~Akian, G.~Cohen, S.~Gaubert, R.~Nikoukhah, and
%J.P.~Quadrat),  Linear systems in (max,+)-algebra. In Proceedings
%of the 29th Conference on Decision and Control, Honolulu, Dec.
%1990.
%%
%\bibitem{RS} C.~Reutenauer and H.~Straubing, \textit{Inversion of matrices over a
%commutative semiring}, J.~Algebra, \textbf{88}, 350–-360,  1984.

\bibitem{Row06} L.H.~Rowen,
\emph{Ring Theory, Vol. I}, Academic Press, Pure and Applied
Mathematics~\textbf{127}, (1988).

\bibitem{Row08} L.H.~Rowen,
\emph{Graduate algebra: Noncommutative view}, AMS Graduate Studies in Mathematics~\textbf{91} (2008).

%\bibitem{Row0} L.H.~Rowen, ``Symmetries in tropical algebra'' in arXiv:1602.00353
%[math.RA].

\bibitem{Row21} L.H.~Rowen,
\emph{Algebras with a negation map}, European J. Math. \textbf{8}, 62-–138 (2022).

\bibitem{Row19} L.H.~Rowen,
\emph{An informal overview of triples and systems}, arXiv 1709.03174 (2017).
%
%\bibitem{So} Y.~Sommerh\"{a}user, Hopf algebras and associated group actions, slides of lecture
%at ACC conference Combinatorics of group actions,  Saint
%John's, Newfoundland, August 29, 2017.
%
%
%\bibitem{SoZ} Y.~Sommerh\"{a}user and Y.~Zhu, Hopf algebras and congruence subgroups, Memoirs AMS 219, number 1028
%(2012).
%
%\bibitem{Tan} Y.~Tan, On invertible matrices over antirings, Linear Algebra and its Applications 423 (2007)
%428–-444.
\bibitem{Sm}   A. Smoktunowicz, \emph{Growth, Entropy and Commutativity of Algebras Satisfying Prescribed
Relations}, Selecta Mathematica (2014) 20, 4: 1197–1212.
 
 \bibitem{Sh} L.M.~Shneerson, \emph{Types of growth and identities of semigroups}, International Journal of Algebra and Computation,
 Special Issue: International Conference on Group Theory "Combinatorial, Geometric and Dynamical Aspects of Infinite Groups"; Guest Editors: Laurent Bartholdi, Tullio Ceccherini-Silberstein, Tatiana Smirnova-Nagnibeda, Andrzej Zuk, No Vol. 15, No. 05,  1189--1204 (2005), DOI 10.1142/S021819670500275X.  

%\bibitem{Tak} M.~Takahashi,\emph{On the bordism categories III},Math. Seminar Notes Kobe University 10, 211--236 (1982).

%\bibitem{Tol} J.~Tolliver,\emph{Finite extensions of $\mathbb{Z}_{\text{max}}$},arXiv:1406.1121 (2016).

\bibitem{Vi} O.Y.~Viro, \emph{Hyperfields for tropical geometry I, Hyperfields and dequantization}, arXiv:1006.3034 (2010).

% \bibitem{Vak} R.~Vakil. The rising sea:
%Foundations of algebraic geometry, preprint, June 4, 2017.

%\bibitem{Wa} H.~Wang, Injective hulls of semimodules over additively-idempotent semirings, Semigroup Forum 48 (1994),
%377--379.


%\bibitem{Yek} A.~Yekutieli,\emph{Derived categories}, arXiv:1610.09640 (2016).
\end{thebibliography}
\end{document}